\newtheorem{theorem}{Theorem}[section]
\newtheorem{lemma}[theorem]{Lemma}
\newtheorem{claim}[theorem]{Claim}
\newtheorem{observation}[theorem]{Observation}
\newtheorem{conjecture}[theorem]{Conjecture}
\theoremstyle{definition}
\newtheorem{definition}[theorem]{Definition}
\DeclareMathOperator{\pr}{Pr}
\crefname{claim}{Claim}{Claims}
\newlist{lemenum}{enumerate}{1}
\setlist[lemenum]{label=(\alph*), ref=\thelemma(\alph*)}
\renewcommand{\v}{{\mathsf{v}}}
\newcommand{\e}{{\mathsf{e}}}
\author{Micha Christoph}
\address{Department of Computer Science, Institute of Theoretical Computer Science, ETH Z\"{u}rich, Switzerland}
\email{micha.christoph@inf.ethz.ch}
\author{Anders Martinsson}
\address{Department of Computer Science, Institute of Theoretical Computer Science, ETH Z\"{u}rich, Switzerland}
\email{anders.martinsson@inf.ethz.ch}
\author{Raphael Steiner}
\address{Department of Computer Science, Institute of Theoretical Computer Science, ETH Z\"{u}rich, Switzerland}
\email{raphaelmario.steiner@inf.ethz.ch}
\author{Yuval Wigderson}
\address{Institute for Theoretical Studies, ETH Z\"urich, Switzerland}
\email{yuval.wigderson@eth-its.ethz.ch}
\thanks{Research of M.C. and R.S. funded by SNSF Ambizione grant No. 216071. \\ Research of Y.W. supported by Dr.\ Max R\"ossler, the Walter Haefner Foundation, and the ETH Z\"urich Foundation.}
\date{\today}
\title{Resolution of the Kohayakawa--Kreuter conjecture}
\begin{document}

\begin{abstract}
A graph $G$ is said to be \emph{Ramsey} for a tuple of graphs $(H_1,\dots,H_r)$ if every $r$-coloring of the edges of $G$ contains a monochromatic copy of $H_i$ in color $i$, for some $i$. A fundamental question at the intersection of Ramsey theory and the theory of random graphs is to determine the threshold at which the binomial random graph $G_{n,p}$ becomes a.a.s.\ Ramsey for a fixed tuple $(H_1,\dots,H_r)$, and a famous conjecture of Kohayakawa and Kreuter predicts this threshold. Earlier work of Mousset--Nenadov--Samotij, Bowtell--Hancock--Hyde, and Kuperwasser--Samotij--Wigderson has reduced this probabilistic problem to a \emph{deterministic} graph decomposition conjecture. In this paper, we resolve this deterministic problem, thus proving the Kohayakawa--Kreuter conjecture. Along the way, we prove a number of novel graph decomposition results which may be of independent interest.

MSC classifications: 05C70, 05D10 (primary), 05C80 (secondary).
\end{abstract}
\maketitle

\section{Introduction}
The topic of this paper is that of graph decompositions, which we view from two different perspectives. From a graph-theoretic perspective, a graph decomposition result is a statement of the form ``any graph $G$ satisfying certain properties can be edge-partitioned into subgraphs with a certain structure''. A famous result of this type is Nash-Williams' theorem \cite{MR0161333}, which gives a necessary and sufficient sparsity condition for when $G$ can be decomposed into the edge-union of $k$ forests.

Additionally, we also study graph decompositions from a Ramsey-theoretic perspective, in which a typical statement is of the form ``if a graph $G$ is sufficiently complex, then it cannot be edge-partitioned into subgraphs avoiding a certain structure''. The foundational result of this type is Ramsey's theorem \cite{MR1576401}, which can be phrased as saying that any sufficiently large complete graph cannot be edge-partitioned into two subgraphs of bounded clique number. Thus, in some sense, the graph-theoretic perspective on decomposition asks when a decomposition into certain structures is \emph{possible}, and the Ramsey-theoretic perspective asks when certain structures are \emph{unavoidable} in \emph{every} decomposition.

The main results of this paper interact with both perspectives. 
On the one hand, we resolve a conjecture of Kohayakawa and Kreuter \cite{MR1609513} on Ramsey properties of random graphs. On the other hand, the proof uses the first perspective. 
For example, one of our main theorems, \cref{thm:43} below, states that if a graph satisfies a certain sparsity condition (closely related to the one appearing in Nash-Williams' theorem), then it can be edge-partitioned into a forest and a subgraph satisfying a different, stronger sparsity condition. This statement and its proof are closely related to many known graph decomposition theorems, such as the main results of \cite{2310.00931,MR4125897,MR4670576,MR0180501,MR0161333,MR3759911,MR3383251}. As mentioned above, the reason we care about the precise statement comes from the Ramsey-theoretic study of random graphs, a topic we now introduce.

\subsection{Ramsey properties of random graphs}
Given an $r$-tuple $(H_1,\dots,H_r)$ of graphs, a graph $G$ is said to be \emph{Ramsey} for $(H_1,\dots,H_r)$ if every $r$-coloring of the edges of $G$ contains a monochromatic copy of $H_i$ in color $i$, for some $1 \leq i \leq r$. Equivalently, in the language above, $G$ is Ramsey for $(H_1,\dots,H_r)$ if there is no decomposition of $G$ into the edge-union of $r$ graphs, the $i$th of which is $H_i$-free\footnote{We say that a graph is $H$-free if it does not contain a copy of $H$ as a (not necessarily induced) subgraph.}. In general, the fundamental question of graph Ramsey theory is to understand which graphs $G$ are Ramsey for a given $r$-tuple; for more information, see e.g.\ the survey \cite{MR3497267}. An important special case of this question, which came to prominence in the late 1980s thanks to pioneering work of Frankl--R\"odl \cite{MR0932121} and \L{}uczak--Ruci\'nski--Voigt \cite{MR1182457}, is the question of when a \emph{random} graph is Ramsey for a given $r$-tuple with high probability. More precisely, if $G_{n,p}$ denotes the binomial random graph with edge density $p$, then the question is for which values of $p=p(n)$ one has that $G_{n,p}$ is Ramsey for $(H_1,\dots,H_r)$ a.a.s.\footnote{We say that an event happens \emph{asymptotically almost surely (a.a.s.)}\ if its probability tends to $1$ as $n \to \infty$, where the implicit parameter $n$ will always be clear from context.} In the symmetric case $H_1 = \dotsb = H_r$, this question was completely resolved in seminal work of R\"odl and Ruci\'nski \cite{MR1276825,MR1249720,MR1262978}. For a graph $J$, let us denote by $\v(J),\e(J)$ its number of vertices and edges, respectively, and let us define the \emph{maximal 2-density} of a graph $H$ with\footnote{By convention, one also defines $m_2(H)=0$ if $\e(H)=0$ and $m_2(H)=\frac 12$ if $\e(H)=1$.} $\e(H) \geq 2$ to be
\[
m_2(H) \coloneqq \max \left\{ \frac{\e(J) - 1}{\v(J) - 2} : J \subseteq H, \v(J) \geq 3\right\}.
\]
With this notation, the random Ramsey theorem of R\"odl and Ruci\'nski \cite{MR1276825,MR1249720} is as follows.
\begin{theorem}[R\"odl--Ruci\'nski]\label{thm:RR}
    Let $H$ be a graph which is not a forest and let $r\geq 2$ be an integer. There exist constants $C>c>0$ such that
    \[
    \lim_{n \to \infty} \pr(G_{n,p} \text{ is Ramsey for }(\underbrace{H,\dots,H}_{r\text{ times}})) = 
    \begin{cases}
        1 & \text{if }p \geq Cn^{-1/m_2(H)},\\
        0&\text{if }p \leq cn^{-1/m_2(H)}.
    \end{cases}
    \]
\end{theorem}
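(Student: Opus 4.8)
The plan is to treat the two directions separately: the \emph{$1$-statement}, that a.a.s.\ $G_{n,p}$ is Ramsey for $(H,\dots,H)$ when $p \ge Cn^{-1/m_2(H)}$, and the \emph{$0$-statement}, that a.a.s.\ $G_{n,p}$ admits a monochromatic-$H$-free $r$-colouring when $p \le cn^{-1/m_2(H)}$. Both properties are monotone in $p$ (Ramsey-ness is increasing, colourability is decreasing), so in each direction it suffices to consider $p$ of the exact order $n^{-1/m_2(H)}$. For the $0$-statement one may additionally assume $r = 2$ (a $2$-colouring using two of the $r$ colours still works) and that $H$ is \emph{$2$-balanced}, i.e.\ that the maximum defining $m_2(H)$ is attained at $J = H$: if not, let $J \subsetneq H$ be a densest subgraph, note that $m_2(J) = m_2(H)$, and observe that a $2$-colouring of $G_{n,p}$ with no monochromatic copy of $J$ has no monochromatic copy of $H$ either. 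For the $1$-statement no reduction is needed, since $m_2(H)$ already accounts for the densest subgraph of $H$.

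The $1$-statement is the deeper direction, and I expect it to be the main obstacle. I would prove it by lifting the \emph{deterministic} Ramsey theorem into the sparse random setting via the sparse regularity method. Apply the sparse regularity lemma to the $r$-edge-coloured graph $G_{n,p}$ (this is legitimate because $G_{n,p}$ a.a.s.\ has no dense spots) to obtain a partition that is regular for all $r$ colour classes simultaneously, and pass to the ``reduced'' graph on the parts, keeping a pair of parts precisely when, for some colour $i$, it is $\varepsilon$-regular with colour-$i$ density of order $p$. A routine cleaning argument shows this reduced graph has positive edge density, and hence --- once the number of parts exceeds the (finite!) Ramsey number $R(H;r)$ --- it contains a copy of $H$ in which every edge corresponds to an $\varepsilon$-regular pair of density of order $p$ in one common colour $i$. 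The Kohayakawa--\L{}uczak--R\"odl embedding theorem then converts this pattern into an actual monochromatic copy of $H$ inside $G_{n,p}$: it asserts that a suitably regular and dense collection of bipartite graphs, sitting inside a host with the pseudorandomness of $G_{n,p}$ at $p \ge Cn^{-1/m_2(H)}$, must span a copy of $H$. The exponent $1/m_2(H)$ is exactly the threshold at which this embedding goes through, with the densest subgraph of $H$ as the bottleneck; establishing such an embedding statement in the sparse regime is the heart of the argument. (R\"odl and Ruci\'nski proved the version needed here directly; a conceptually cleaner route, found later, derives it from the hypergraph container method applied to the family of copies of $H$ in $K_n$, whose relevant co-degree parameters are, once again, governed by $m_2(H)$.)

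For the $0$-statement --- with $H$ now $2$-balanced, $r = 2$, and $p \asymp n^{-1/m_2(H)}$ with a small constant --- the plan is to $2$-colour $E(G_{n,p})$ at random and correct the result by the Lov\'asz Local Lemma after a cleaning step. The structural input is that at this density $G_{n,p}$ a.a.s.\ behaves quasi-randomly with respect to copies of $H$: all but a vanishing fraction of copies of $H$ share an edge with only a bounded number of other copies, while the exceptional ``dense clusters'' --- edges lying in an unbounded number of copies --- are confined to small, pairwise vertex-disjoint, sparse subgraphs. Proving this with the correct quantitative bounds is the technical heart, and it is here that the value of $m_2(H)$ is used: a subgraph denser than the critical density dictated by $m_2(H)$ would, by a first-moment estimate, a.a.s.\ not appear. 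Given this, colour each edge of $G_{n,p}$ independently and uniformly red or blue, and for each copy $K$ of $H$ let $A_K$ be the event that $K$ is monochromatic; then $\pr(A_K) = 2^{1-\e(H)}$ is a fixed constant (using $\e(H) \ge 3$, as $H$ is not a forest) and $A_K$ is mutually independent of all events $A_{K'}$ with $K'$ edge-disjoint from $K$. Applying the Local Lemma to the events $A_K$ over the ``generic'' copies $K$ (those avoiding every dense cluster, each of which shares an edge with only $O(1)$ other copies) produces a colouring with no monochromatic generic copy of $H$; the bounded number of copies meeting a dense cluster are then handled by a separate, local argument that exploits the forest-like structure of each cluster to complete the colouring of its edges consistently. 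This yields a $2$-colouring of $G_{n,p}$ with no monochromatic copy of $H$, which together with the $1$-statement completes the proof.
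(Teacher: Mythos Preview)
The paper does not prove this theorem; it is quoted as a known background result with citations to R\"odl and Ruci\'nski's original papers. What the paper \emph{does} provide is a description of the architecture of the original $0$-statement argument: a \emph{probabilistic lemma} (if $p\le cn^{-1/m_2(H)}$ then a.a.s.\ $G_{n,p}$ contains no subgraph with $m(G')>m_2(H)$, roughly speaking) together with a \emph{deterministic lemma} (any graph $G$ with $m(G)\le m_2(H)$ is not Ramsey for $(H,\dots,H)$). No proof of either lemma, nor of the $1$-statement, appears in the paper.

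Since there is no proof to compare against, let me comment on your sketch directly. Your $1$-statement outline via sparse regularity plus a K\L R-type embedding is a legitimate modern approach (and you correctly flag the container method as a cleaner later route), though it is not the argument R\"odl and Ruci\'nski actually gave; their original proof predates the resolution of the K\L R conjecture and proceeds by a more hands-on amplification. Your $0$-statement sketch, however, has a genuine gap. The structural claims you rely on---that the ``dense clusters'' of edges lying in many copies of $H$ are confined to small, \emph{pairwise vertex-disjoint} subgraphs with a ``forest-like structure'' permitting local completion of the colouring---are neither standard facts nor obviously true, and you give no indication of how to establish them. Making this precise is essentially the whole difficulty: it amounts to proving that the $H$-copies in $G_{n,p}$ form a structure that can be deterministically $2$-coloured, which is exactly the content of the deterministic lemma the paper describes. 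The Local Lemma on the generic copies is the easy part; the hard part is what you have swept into the phrase ``a separate, local argument''.
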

Any theorem of this form really consists of two statements, which are usually called the $1$- and $0$-statements; the latter says that the asymptotic probability of an event is $0$ in a certain regime, and the former says that the asymptotic probability is $1$ in a different regime.

There is a simple heuristic explanation for why the threshold for the Ramsey property is controlled by the quantity $m_2(H)$. To explain it, let us suppose for simplicity that $H$ is \emph{strictly 2-balanced}, meaning that $m_2(J) < m_2(H)$ for any proper subgraph $J \subsetneq H$. Then one can easily verify that at the regime $p \asymp n^{-1/m_2(H)}$, an average edge of $G_{n,p}$ lies in a constant number of copies of $H$. Thus, if $p \leq cn^{-1/m_2(H)}$ where $c \ll 1$, then a typical edge lies in no copy of $H$, and one expects the copies of $H$ in $G_{n,p}$ to be ``well spread-out''. Thus, it is reasonable to expect that one can $r$-color the edges without creating monochromatic copies of $H$. On the other hand, if $p \geq Cn^{-1/m_2(H)}$ where $C \gg 1$, then a typical edge lies in a large (constant) number of copies of $H$. In this regime, we expect a lot of interaction between different $H$-copies, and it should be difficult to avoid creating monochromatic copies.

The heuristic above suggests that the ``reason'' why $G_{n,p}$ should be Ramsey for $(H,\dots,H)$ is because of a ``global'' interaction between the copies of $H$. However, there is also a potential ``local'' reason: for any fixed graph $G$ that is Ramsey for $(H,\dots,H)$, if $G$ is a subgraph of $G_{n,p}$, then certainly $G_{n,p}$ is Ramsey for $(H,\dots,H)$. Therefore, in order to prove the $0$-statement in \cref{thm:RR}, one necessarily has to prove that when $p \leq cn^{-1/m_2(H)}$, then $G_{n,p}$ a.a.s.\ does not contain any fixed $G$ which is Ramsey for $(H,\dots,H)$. It is well-known (e.g.\ \cite[Theorem 3.4]{MR1782847}) that the threshold for appearance of $G$ in $G_{n,p}$ is determined by the \emph{maximal density} of $G$, defined as
\[
m(G) \coloneqq \max \left\{ \frac{\e(J)}{\v(J)} : J \subseteq G, \v(J) \geq 1\right\}.
\]
Thus, a necessary condition for the $0$-statement in \cref{thm:RR} is the following lemma, which R\"odl and Ruci\'nski \cite{MR1249720} termed the \emph{deterministic lemma}.
\begin{lemma}[R\"odl--Ruci\'nski]
    Let $H$ be a graph which is not a forest, and let $r \geq 2$ be an integer. If $G$ is Ramsey for the $r$-tuple $(H,\dots,H)$, then $m(G) > m_2(H)$.
\end{lemma}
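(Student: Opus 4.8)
The plan is to prove the contrapositive: if $m(G) \le m_2(H)$, then $E(G)$ admits a $2$-coloring with no monochromatic copy of $H$. This suffices, since such a $2$-coloring is in particular an $r$-coloring of $E(G)$ with no monochromatic $H$, so $G$ is not Ramsey for $(H,\dots,H)$. Before constructing the coloring, I would reduce to a convenient shape of $H$: replace $H$ by a subgraph $\hat H \subseteq H$ with $\v(\hat H)\ge 3$ attaining the maximum in the definition of $m_2(H)$, chosen with the fewest edges among all such subgraphs. A short argument then shows $\hat H$ is strictly $2$-balanced and not a forest, so $m_2(\hat H)=m_2(H)=\frac{\e(\hat H)-1}{\v(\hat H)-2}=: d>1$, and (using $d>1$) that $\hat H$ is connected with minimum degree at least $2$. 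Since $\hat H\subseteq H$, a monochromatic copy of $H$ always contains a monochromatic copy of $\hat H$, so it is enough to $2$-color $E(G)$ avoiding monochromatic $\hat H$. Renaming $\hat H$ as $H$, we may thus assume $H$ is strictly $2$-balanced, connected, not a forest, with $\delta(H)\ge 2$ and $d=m_2(H)>1$.

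Next I would prove by induction on $\v(G)$ that if $m(G)\le d$ then $E(G)$ has a $2$-coloring with no monochromatic $H$. As $m(G)\le d$ forces $\e(G)\le d\,\v(G)$, the graph $G$ has a vertex $v$ with $\deg_G(v)\le \lfloor 2d\rfloor$. Every copy of $H$ through $v$ uses at least $\delta(H)\ge 2$ of the edges at $v$, so if $\deg_G(v)<\delta(H)$ then $v$ lies in no copy of $H$, and we finish by $2$-coloring $G-v$ by induction (note $m(G-v)\le m(G)\le d$) and coloring the edges at $v$ arbitrarily. Otherwise we $2$-color $G-v$ by induction and must extend to the $\le\lfloor 2d\rfloor$ edges at $v$. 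For each copy $F$ of $H$ through $v$, write $A_F$ for its edges at $v$ (so $|A_F|=\deg_F(v)\ge 2$) and $B_F=E(F)\setminus A_F$; since $H$ has a cycle it is not a star, so $B_F\neq\emptyset$, and $F$ can become monochromatic only if $B_F$ is already monochromatic in some color $\chi_F$, in which case we need $A_F$ not entirely colored $\chi_F$. Extending the coloring is therefore a local $2$-coloring problem on at most $\lfloor 2d\rfloor$ edges subject to a family of ``not all red'' / ``not all blue'' constraints, and one must show this system is always satisfiable.

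The satisfiability of this local system is, I expect, the crux. For an arbitrary family of constraint sets it can fail — for instance, constraints forcing each of three edges to be bichromatic with each of the other two are jointly unsatisfiable — so the argument must use that the sets $A_F$ arise from genuine copies of the strictly $2$-balanced graph $H$: too many copies of $H$ through $v$ overlapping in prescribed patterns on the edges at $v$ would yield a subgraph of $G$ on few vertices with more than $d$ times as many edges, contradicting $m(G)\le d$. Turning this into a proof will likely require strengthening the induction hypothesis so that one also controls the monochromatic ``broken'' copies of $H$ present at each vertex, allowing the extension to be carried out. (An alternative entry point is to pass to a subgraph $G'\subseteq G$ that is Ramsey-minimal for $(H,H)$; then for each edge $e$ a coloring of $G'-e$ witnessing criticality yields two copies of $H$ through $e$ meeting in exactly $e$, hence a subgraph on at most $2\v(H)-2$ vertices with $2\e(H)-1$ edges. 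A direct computation shows a single such configuration is not quite dense enough to contradict $m(G')\le d$, so this route too must combine many such configurations, or force the copies to overlap in more than an edge — the same obstacle in another guise.) Once the incompatibility of such overcrowding with $m(G)\le m_2(H)$ is established, the extension goes through, the induction closes, and the lemma follows.
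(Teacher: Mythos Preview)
The paper does not prove this lemma; it is quoted as a known result of R\"odl and Ruci\'nski with a citation, so there is no in-paper argument to compare your attempt against.

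On its own merits, your proposal is a plan with its central step missing, not a proof. The reduction to a strictly $2$-balanced $H$ and the inductive framework (delete a vertex $v$ of degree at most $\lfloor 2d\rfloor$, colour $G-v$, extend at $v$) are fine, but you yourself flag the extension step as ``the crux'' and do not resolve it. You correctly observe that an arbitrary family of ``not-all-$\chi_F$'' constraints on at most $\lfloor 2d\rfloor$ edges need not be satisfiable, and then offer only a heuristic (overcrowded $H$-copies through $v$ ought to produce a subgraph violating $m(G)\le d$) together with a vague suggestion to ``strengthen the induction hypothesis'', without specifying what the strengthened statement is, why it survives the deletion of $v$, or why it makes the extension go through. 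The Ramsey-minimal alternative has the same status: you note that a single critical edge yields a configuration on $2\v(H)-2$ vertices with $2\e(H)-1$ edges, compute that its density falls short of $d$, and then stop, saying one ``must combine many such configurations'' without indicating how. In both routes the proposal halts exactly where the actual content of the lemma begins; what is written is a reasonable exploratory sketch, but it does not constitute a proof.
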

Equivalently, the deterministic lemma can be phrased as a decomposition result: if $m(G) \leq m_2(H)$, then $G$ can be edge-partitioned into $r$ graphs, each of which is $H$-free. As discussed above, the deterministic lemma is certainly a necessary condition for the $0$-statement in \cref{thm:RR} to hold, but in fact, R\"odl and Ruci\'nski \cite{MR1249720} proved a so-called \emph{probabilistic lemma}, which states that this simple necessary condition is also sufficient. Informally, the probabilistic lemma guarantees that the ``local'' and ``global'' reasons discussed above are the only two reasons why $G_{n,p}$ is Ramsey for $(H,\dots,H)$, and the deterministic lemma rules out the ``local'' reason.

\cref{thm:RR} provides a very satisfactory answer to the question ``when is $G_{n,p}$ Ramsey for $(H_1,\dots,H_r)$?'' in the case that $H_1 = \dotsb = H_r$, but says nothing about the general case. However, nearly thirty years ago, Kohayakawa and Kreuter \cite{MR1609513} formulated a conjecture for the threshold for an arbitrary $r$-tuple of graphs. Given two graphs $H_1,H_2$ with $m_2(H_1) \geq m_2(H_2)$, they defined the \emph{mixed 2-density} to be
\[
m_2(H_1,H_2) \coloneqq \max \left \{ \frac{\e(J)}{\v(J) - 2 + 1/m_2(H_2)} : J \subseteq H_1, \v(J) \geq 2\right\}.
\]
It is well-known and easy to verify (see e.g.\ \cite[Lemma 3.4]{kuper} or \cite[Proposition 3.1]{bowtell}) that $m_2(H_1)\geq m_2(H_1,H_2) \geq m_2(H_2)$, and that both inequalities are strict if one is.
\begin{conjecture}[Kohayakawa--Kreuter]\label{conj:KK}
    Let $H_1,\dots,H_r$ be graphs, and suppose that $m_2(H_1) \geq \dotsb \geq m_2(H_r)$ and $m_2(H_2)>1$. There exist constants $C>c>0$ such that
    \[
    \lim_{n \to \infty} \pr(G_{n,p} \text{ is Ramsey for }(H_1,\dots,H_r)) = 
    \begin{cases}
        1 & \text{if }p \geq Cn^{-1/m_2(H_1,H_2)},\\
        0&\text{if }p \leq cn^{-1/m_2(H_1,H_2)}.
    \end{cases}
    \] 
\end{conjecture}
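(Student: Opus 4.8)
The two halves of \cref{conj:KK} are of a rather different nature, and I would treat them separately. The $1$-statement --- that $G_{n,p}$ is a.a.s.\ Ramsey as soon as $p\ge Cn^{-1/m_2(H_1,H_2)}$ --- was established before this work (in full generality by Mousset--Nenadov--Samotij, building on earlier partial results), so I would simply invoke it. For the $0$-statement, the first reduction is to $r=2$: a $2$-colouring of $E(G)$ with no copy of $H_1$ in colour $1$ and no copy of $H_2$ in colour $2$ is, \emph{a fortiori}, an $r$-colouring with no copy of $H_i$ in colour $i$, so it suffices to prove that $G_{n,p}$ is a.a.s.\ not Ramsey for $(H_1,H_2)$ when $p\le cn^{-1/m_2(H_1,H_2)}$. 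For this I would start from the reduction of Mousset--Nenadov--Samotij, Bowtell--Hancock--Hyde and Kuperwasser--Samotij--Wigderson \cite{bowtell,kuper}, which turns the two-colour $0$-statement into a purely deterministic graph-decomposition problem: informally, that every graph $G$ that is suitably sparse relative to the mixed density $m_2(H_1,H_2)$ admits an edge partition $E(G)=E_1\sqcup E_2$ in which $G[E_1]$ is $H_1$-free and $G[E_2]$ is $H_2$-free. The entire remaining task is then to prove this deterministic statement, and this is the role of \cref{thm:43} and the other new decomposition results.

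My plan for the deterministic part rests on two easy observations about what the colour classes should be. First, if $m_2(G')<m_2(H_1)$ then $G'$ is $H_1$-free, since a copy of $H_1$ in $G'$ would contain a subgraph of $H_1$ attaining $m_2(H_1)$ and hence would force $m_2(G')\ge m_2(H_1)$; the same holds for $H_2$; and, as $m_2(H_2)>1$ prevents $H_2$ from being a forest, every forest is automatically $H_2$-free. Hence it is enough to split $E(G)$ into a part that is ``$m_2(H_1)$-sparse'' and a part that is ``$m_2(H_2)$-sparse'' (the latter being allowed to be merely a forest when that is cheaper). The obstacle is that ``$m_2(\cdot)$-sparsity'' is a count-matroid-type condition sitting just past the boundary of matroidality --- it features the quantity $\v(J)-2$ rather than the $\v(J)-1$ of Nash-Williams' arboricity theorem, and the corresponding families of sparse subgraphs fail, by a hair, to be the independent sets of a matroid --- so one cannot simply quote the matroid union theorem. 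This is exactly the gap that the new decomposition results close: \cref{thm:43} says that a graph obeying a Nash-Williams-type sparsity bound can be split into a forest together with a residue obeying a strictly stronger, $m_2$-type, bound. I would use these results as a surrogate for matroid union: iteratively peel forests off $G$ via \cref{thm:43} and assign them to the two colour classes, maintaining a precise ledger of how much of each class's density budget has been spent, until the residue is $m_2(H_1)$-sparse (hence $H_1$-free) and can be given colour $1$, while the peeled-off forests, assembled correctly, remain $m_2(H_2)$-sparse (hence $H_2$-free) in colour $2$.

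I expect the real difficulty to be twofold. First, for the ledger above to balance one needs the decomposition results to be essentially \emph{sharp} --- optimal ``forest $+$ sparser residue'' splittings under exactly the hypothesis the reduction hands us --- and proving such tight statements means pushing augmenting-path and exchange arguments (in the spirit of the proofs of Nash-Williams' theorem and the Nash-Williams--Tutte tree-packing theorem, and of the results of \cite{2310.00931,MR4125897,MR4670576,MR0180501,MR0161333,MR3759911,MR3383251}) down to the delicate $\v(J)-2$ regime where matroid theory no longer helps; this is the technical heart. Second, and I think this is the genuinely hard obstacle, there is the passage from an arbitrary pair $(H_1,H_2)$ to these clean sparsity thresholds. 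When $H_1$ is not strictly $2$-balanced the subgraph of $H_1$ responsible for the value $m_2(H_1,H_2)$ must be singled out and tracked carefully through the decomposition, and in the boundary regime $m_2(H_1)=m_2(H_2)$ --- where $m_2(H_1,H_2)$ collapses to the common value and colour $2$ can no longer soak up a ``cheap'' forest to compensate for a denser colour $1$ --- the asymmetric peeling strategy loses its force, so one needs a genuinely two-sided decomposition statement rather than an iteration of \cref{thm:43}. I expect the bulk of the new ideas to be concentrated in handling precisely these boundary configurations of $H_1$ and $H_2$, which are exactly the cases left open by \cite{bowtell,kuper}.
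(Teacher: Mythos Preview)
Your high-level reduction is correct: the $1$-statement is known, and \cref{thm:prob lem} reduces the $0$-statement to the deterministic \cref{thm:det lem}. But your proposed strategy for \cref{thm:det lem} --- iteratively peeling forests via \cref{thm:43} and assembling them into an ``$m_2(H_2)$-sparse'' colour class --- has a genuine gap: a union of several forests is in general neither a forest nor $m_2(H_2)$-sparse (already $K_4$, which is the union of two forests, has $m_2(K_4)=5/2$), so there is no ledger to balance. The paper does not iterate. It applies a \emph{single} decomposition theorem, chosen by a case analysis on $H_2$ (after passing to a strictly $2$-balanced subgraph): if $H_2$ is non-bipartite, \cref{lem:bipartite} finishes; if $H_2$ is bipartite and contains two cycles, one application of \cref{thm:pseudoforest} yields a pseudoforest $F$ (automatically $H_2$-free) with $m_2(G-F)\le m(G)\le m_2(H_1,H_2)<m_2(H_1)$, so $G-F$ is $H_1$-free; and otherwise $H_2$ is an even cycle, so $m_2(H_2)\le 3/2$, which gives the key inequality $m_2(H_1,H_2)\le m_{4/3}(H_1)$, and one application of \cref{thm:43} yields a forest $F$ with $m_{4/3}(G-F)<m_{4/3}(H_1)$. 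The point of the exotic $\frac43$-density is precisely to make this last chain of inequalities go through in one shot.

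You also misidentify where the difficulty lies. The boundary case $m_2(H_1)=m_2(H_2)$ is \emph{not} an obstacle here: \cref{thm:prob lem} only requires the deterministic lemma under the strict inequality $m_2(H_1)>m_2(H_2)$ (the equality case was handled earlier in \cite{2305.19964}), and this strictness is exactly what gives $m_2(H_1,H_2)<m_2(H_1)$ above. Non-balancedness of $H_1$ is likewise a non-issue, since the argument only needs $m_2(G-F)<m_2(H_1)$ (respectively $m_{4/3}(G-F)<m_{4/3}(H_1)$) to conclude $H_1$-freeness. The real work is in proving \cref{thm:pseudoforest} and \cref{thm:43} themselves, and the delicate point is not an augmenting-path sharpness bound but controlling a \emph{different} density notion ($m_2$ or $m_{4/3}$) on the residue than the one ($m$) assumed on $G$; the paper does this via fractional orientations (``allocations'') and an analysis of terminal strong components of the associated digraph.
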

We remark that the assumption $m_2(H_2)>1$ is equivalent to saying that $H_2$ is not a forest; this condition was added by Kohayakawa, Schacht, and Sp\"ohel \cite{MR3143588} to rule out sporadic counterexamples (in analogy to the statement of \cref{thm:RR}). Just as in the case of \cref{thm:RR}, there is a simple heuristic explanation for why the function $m_2(H_1,H_2)$ controls the threshold for the asymmetric Ramsey property of $G_{n,p}$. Roughly speaking, one can verify that at the density $p \asymp n^{-1/m_2(H_1,H_2)}$, the number of edges appearing in the union of the $H_1$-copies is of the same order as the number of copies of $H_2$. As the edges not participating in copies of $H_1$ are irrelevant (they can be colored in color $1$ with no adverse consequences), we find that an average ``relevant'' edge intuitively lies in a constant number of copies of $H_2$, and the heuristic above suggests that this is the threshold at which it becomes hard to avoid monochromatic copies of $H_2$. We remark that this heuristic, as well as the statement of \cref{conj:KK}, does not involve $H_3,\dots,H_r$ at all; thus, the intuition is that being Ramsey for a tuple is essentially as hard as being Ramsey for the two densest graphs in the tuple.

\cref{conj:KK} has received a great deal of attention over the past three decades \cite{MR1609513, MR2531778, MR3143588, MR3725732, MR3899160, kuper, bowtell, 2305.19964}. For many years, most papers on the topic aimed to prove the Kohayakawa--Kreuter conjecture for certain special families of $H_1,\dots,H_r$; for example, Kohayakawa and Kreuter \cite{MR1609513} proved it when every $H_i$ is a cycle, Marciniszyn, Skokan, Sp\"ohel, and Steger \cite{MR2531778} proved it when every $H_i$ is a clique, Liebenau, Mattos, Mendon\c ca, and Skokan \cite{MR4597167} proved it when $H_1$ is a clique and $H_2$ is a cycle, Hyde \cite{MR4565396} proved it for most pairs of regular graphs, and Kuperwasser and Samotij \cite{2305.19964} proved it when $m_2(H_1)=m_2(H_2)$. More recent works have proved results in greater generality. Notably, Mousset, Nenadov, and Samotij \cite{MR4173138} established the $1$-statement of \cref{conj:KK} for all $(H_1,\dots,H_r)$. Subsequently, Bowtell--Hancock--Hyde \cite{bowtell} and Kuperwasser--Samotij--Wigderson \cite{kuper} independently proved a generalization of the R\"odl--Ruci\'nski probabilistic lemma in the asymmetric case.
\begin{theorem}[Bowtell--Hancock--Hyde, Kuperwasser--Samotij--Wigderson]\label{thm:prob lem}
    Suppose that for every pair of graphs $(H_1,H_2)$ with $m_2(H_1) > m_2(H_2) > 1$, the following holds: if $G$ is Ramsey for $(H_1,H_2)$, then $m(G) > m_2(H_1,H_2)$. Then \cref{conj:KK} is true\footnote{Strictly speaking, the result of \cite{kuper,bowtell} only implies that the $0$-statement of \cref{conj:KK} is true, but the $1$-statement is already known to be true unconditionally, thanks to the result of Mousset--Nenadov--Samotij \cite{MR4173138} mentioned above.}.
\end{theorem}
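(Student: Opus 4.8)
The plan is to deduce \cref{conj:KK} from two facts already available in the literature: the $1$-statement, proved unconditionally for every tuple by Mousset--Nenadov--Samotij \cite{MR4173138}, and an asymmetric analogue of the R\"odl--Ruci\'nski probabilistic lemma. Accordingly I would split \cref{conj:KK} into its two halves. The $1$-statement needs no work beyond citing \cite{MR4173138}, so everything reduces to the $0$-statement: assuming the displayed deterministic hypothesis, one must show that for a sufficiently small constant $c>0$, if $p \le cn^{-1/m_2(H_1,H_2)}$ then $G_{n,p}$ is a.a.s.\ \emph{not} Ramsey for $(H_1,\dots,H_r)$.

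The first step is to pass from $r$ colours down to two. If $G$ is not Ramsey for the pair $(H_1,H_2)$, fix a $2$-edge-colouring of $G$ with no copy of $H_1$ in the first colour and no copy of $H_2$ in the second, and reinterpret it as an $r$-edge-colouring that never uses colours $3,\dots,r$; as each $H_i$ has at least one edge, the empty colour-$i$ class is automatically $H_i$-free, so $G$ is not Ramsey for $(H_1,\dots,H_r)$ either. Hence $\pr(G_{n,p}\text{ is Ramsey for }(H_1,\dots,H_r)) \le \pr(G_{n,p}\text{ is Ramsey for }(H_1,H_2))$, and — since the pair $(H_1,H_2)$ has exactly the threshold $n^{-1/m_2(H_1,H_2)}$ appearing in \cref{conj:KK} — it suffices to establish the $0$-statement for $(H_1,H_2)$. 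If $m_2(H_1)=m_2(H_2)$ this is a theorem of Kuperwasser--Samotij \cite{2305.19964}, so I may assume $m_2(H_1)>m_2(H_2)>1$; by the strictness in $m_2(H_1)\ge m_2(H_1,H_2)\ge m_2(H_2)$ recalled above, also $m_2(H_1)>m_2(H_1,H_2)>m_2(H_2)$, so the deterministic hypothesis applies verbatim to $(H_1,H_2)$. Finally I would perform the standard reduction to well-behaved targets: replace $H_2$ by a subgraph attaining the maximum defining $m_2(H_2)$ (which is then strictly $2$-balanced) and $H_1$ by a subgraph attaining the maximum defining $m_2(H_1,H_2)$ (strictly balanced for the mixed density); this leaves $m_2(H_1,H_2)$ and the chain $m_2(H_1)>m_2(H_2)>1$ unchanged, and, because a colouring with no monochromatic copy of the two subgraphs has no monochromatic copy of the original graphs, it suffices to treat the well-behaved pair.

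What remains is precisely the asymmetric probabilistic lemma of Bowtell--Hancock--Hyde \cite{bowtell} and of Kuperwasser--Samotij--Wigderson \cite{kuper}, which I would invoke as a black box; this is the main obstacle, and essentially all of the difficulty lies here. For orientation: producing a valid $2$-colouring of $G_{n,p}$ is the same as producing a set $B\subseteq E(G_{n,p})$ such that $G_{n,p}[B]$ is $H_2$-free and $B$ meets every copy of $H_1$ in $G_{n,p}$ (then colour $B$ with colour $2$ and its complement with colour $1$). At $p\asymp n^{-1/m_2(H_1,H_2)}$ the number of edges lying in copies of $H_1$ has the same order as the number of copies of $H_2$ — exactly the balance that makes such a $B$ plausible — and one constructs $B$ by the hypergraph container method, much as in the container-based reproof of \cref{thm:RR}. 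The deterministic hypothesis enters to certify that any graph which is Ramsey for $(H_1,H_2)$ contains a subgraph of density strictly above $m_2(H_1,H_2)$; since a fixed such subgraph a.a.s.\ does not appear in $G_{n,p}$ at this density, and since containers replace the infinitely many (Ramsey-minimal) obstructions by a controlled family, a union bound closes the argument. The delicate points, all carried out in \cite{bowtell,kuper}, are the simultaneous bookkeeping of the two colour classes and the need to calibrate the container step to the mixed density $m_2(H_1,H_2)$ rather than to $m_2(H_1)$ or $m_2(H_2)$ alone.
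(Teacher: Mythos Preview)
The paper does not prove \cref{thm:prob lem}; it is stated as a result of \cite{bowtell,kuper} and used as a black box, so there is no argument in the paper to compare against. Your proposal is consistent with this: you correctly split off the $1$-statement via \cite{MR4173138}, make the standard reductions (from $r$ colours to two, the equal-density case via \cite{2305.19964}, passage to strictly balanced subgraphs), and then defer the core probabilistic/container argument to \cite{bowtell,kuper}---which is exactly where the paper points as well.
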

\cref{thm:prob lem} is a generalization of the R\"odl--Ruci\'nski probabilistic lemma discussed above. Indeed, the condition in \cref{thm:prob lem} is clearly necessary for the $0$-statement of \cref{conj:KK} to hold, as if there were some $G$ with $m(G) \leq m_2(H_1,H_2)$ which is Ramsey for $(H_1,H_2)$, then that $G$ appears with positive probability in $G_{n,cn^{-1/m_2(H_1,H_2)}}$ for any constant $c>0$, and hence the $0$-statement of \cref{conj:KK} would be false. \cref{thm:prob lem} then says that this necessary condition is also sufficient. Thus, the validity of \cref{conj:KK} is reduced to a deterministic graph decomposition question. In both \cite{bowtell,kuper}, this deterministic condition was verified for most pairs $(H_1,H_2)$, but its verification for all pairs remained open.

\subsection{Our results}
As discussed above, the earlier works \cite{MR4173138,bowtell,kuper} reduced  \cref{conj:KK} to verifying a certain deterministic condition.
Our main result confirms that this condition always holds, thus completing the proof of \cref{conj:KK}.
\begin{theorem}\label{thm:det lem}
    Let $H_1,H_2$ be graphs with $m_2(H_1) > m_2(H_2)>1$. If a graph $G$ is Ramsey for $(H_1,H_2)$, then $m(G) > m_2(H_1,H_2)$.
\end{theorem}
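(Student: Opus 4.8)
The plan is to argue by contraposition: assuming $m(G) \le m_2(H_1,H_2)$, I will construct an explicit $2$-coloring of $E(G)$ with no copy of $H_1$ in color $1$ and no copy of $H_2$ in color $2$. The starting point is the density hypothesis $m(G) \le m_2(H_1,H_2)$, which by definition of $m_2(H_1,H_2)$ means that every subgraph $J \subseteq G$ satisfies $\e(J) \le (\v(J) - 2 + 1/m_2(H_2)) \cdot m_2(H_1,H_2)$; writing $\alpha := 1/m_2(H_2) \in (0,1)$, this is a Nash-Williams-type sparsity condition with a ``fractional'' correction term. The natural strategy, in the spirit of the decomposition results cited around \cref{thm:43}, is to first peel off a sparse part of $G$ — ideally a forest, or a graph of very low maximum density — and color it in color $1$; this part will be too sparse to contain $H_1$ (or, after iterating, we arrange that the color-$1$ graph is $H_1$-free because $m(G_1) < m_2(H_1)$, using that $m_2(H_1) > m_2(H_1,H_2)$). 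The remainder $G_2 = G \setminus G_1$, colored in color $2$, must then be shown to be $H_2$-free, which requires $m(G_2)$ (or a suitable $2$-density surrogate) to stay below $m_2(H_2)$.

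Concretely, I would aim to prove the following decomposition: if $m(G) \le m_2(H_1,H_2)$, then $E(G)$ partitions as $E_1 \sqcup E_2$ where the graph on $E_1$ has maximum density $< m_2(H_1)$ and the graph on $E_2$ has the property that every subgraph with at least $3$ vertices has $2$-density $< m_2(H_2)$ (equivalently $m_2(G_2) < m_2(H_2)$, hence $G_2$ is $H_2$-free since any copy of $H_2$ would force $m_2(G_2) \ge m_2(H_2)$). To find such a partition I would set up a min-cost orientation / fractional-decomposition argument: consider orienting the edges of $G$ so that the in-degree of each vertex is controlled, and use the sparsity bound together with a max-flow / deficiency (Hall-type) argument — this is the technique underlying Nash-Williams, Hakimi's orientation theorem, and the pseudoforest-decomposition results in \cite{MR0161333,MR0180501,MR3383251}. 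The correction term $-2 + \alpha$ (rather than an integer) is what makes this delicate: one cannot simply invoke Nash-Williams off the shelf, so I expect to need a weighted version where vertices get ``budget'' $m_2(H_1,H_2)$ per unit and two designated vertices of each potential $H_1$-copy get a discount of size $2 - \alpha$. An alternative, and perhaps cleaner, route is to induct on $\e(G)$: find a subgraph $J$ realizing the maximum in the definition of $m(G)$, handle it directly (its structure is constrained — it is ``$2$-balanced'' relative to the bound), remove an edge or a low-degree vertex, apply induction, and extend the coloring, being careful that re-inserting the removed piece does not create a monochromatic $H_1$ or $H_2$.

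The main obstacle, I expect, is precisely controlling the color-$2$ graph: ensuring $m_2(G_2) < m_2(H_2)$ is a \emph{strict} inequality on \emph{every} subgraph on $\ge 3$ vertices, and the naive peeling of a forest from $G$ only controls $m(G_1)$ globally, not the fine-grained $2$-density of $G_2$ on every subgraph. Handling subgraphs $J$ that are themselves very dense — those nearly attaining $\e(J) = (\v(J) - 2 + \alpha)\, m_2(H_1,H_2)$ — will require a local argument: such $J$ must contain a sufficiently rich sub-forest to remove, and one must verify the arithmetic that $(\e(J) - (\text{size of removed forest})) / (\v(J) - 2) < m_2(H_2)$, which is exactly where the definition $m_2(H_1,H_2) = \max \e(J)/(\v(J) - 2 + 1/m_2(H_2))$ has been rigged to make things work. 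I would organize the proof by first proving the clean structural/decomposition statement (of the type \cref{thm:43}), possibly via several intermediate lemmas about orientations and forest-decompositions of graphs satisfying the sparsity bound, and only at the end assemble the two color classes and check that neither contains its forbidden graph. The genuinely novel content — and the hardest step — will be the decomposition lemma that simultaneously produces a low-density color-$1$ part and a uniformly low-$2$-density color-$2$ part; everything else is bookkeeping with $m_2$, $m$, and the definition of $m_2(H_1,H_2)$.
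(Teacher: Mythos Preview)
Your plan has a fundamental orientation error. You propose to put the sparse peeled-off subgraph $G_1$ (a forest, say) in color~$1$ and then demand $m_2(G_2) < m_2(H_2)$ for the remainder. But $m_2(H_2) < m_2(H_1,H_2)$, so you are asking that removing a single forest from a graph with $m(G)$ as large as $m_2(H_1,H_2)$ drag its $2$-density all the way below $m_2(H_2)$; a forest removes at most $\v(G)-1$ edges, which is nowhere near enough when $m(G)$ is bounded away from $m_2(H_2)$. The paper does the opposite: the forest (or pseudoforest) $F$ goes in color~$2$---it is $H_2$-free simply because $m_2(H_2)>1$ forces $H_2$ to contain a cycle---and one only needs $m_2(G-F) < m_2(H_1)$ for the remainder, which is the attainable target since $m_2(H_1) > m_2(H_1,H_2) \ge m(G)$. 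Relatedly, your claim that $m(G_1) < m_2(H_1)$ makes $G_1$ $H_1$-free is false (e.g.\ $m(K_4) = \tfrac32 < \tfrac52 = m_2(K_4)$); the correct hypothesis is $m_2(G_1) < m_2(H_1)$.

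Even after reorienting the colors, your single unified decomposition---remove a forest $F$ with $m_2(G-F) \le m(G)$---is precisely \cref{conj:KSW}, which the paper leaves open. The actual proof sidesteps it by a case analysis on $H_2$: reduce to strictly $2$-balanced $H_2$; dispose of non-bipartite $H_2$ via the known \cref{lem:bipartite}; handle $m(G)\le \tfrac32$ via Nash--Williams; and for bipartite $H_2$ split according to whether $H_2$ has at least two cycles (then $H_2$ is not contained in any pseudoforest and \cref{thm:pseudoforest} suffices) or $H_2$ is an even cycle (then $m_2(H_2)\le \tfrac32$, and \cref{thm:43} controlling $m_{4/3}(G-F)$ is exactly what is needed, via $m_2(H_1,H_2)\le m_{4/3}(H_1)$). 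Your outline contains none of this structure, and the ``main obstacle'' you identify---getting $m_2$ of the remainder below $m_2(H_2)$---is not the obstacle the paper faces at all.
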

Equivalently, in the graph decomposition language, \cref{thm:det lem} states that if $m(G) \leq m_2(H_1,H_2)$, then $G$ can be edge-partitioned into an $H_1$-free graph and an $H_2$-free graph. 

In fact, we prove two more general graph decomposition theorems, \cref{thm:43,thm:pseudoforest} below, which we expect to be of independent interest. It is not hard to see (and we show this in the next section) that these two results, plus simple well-known arguments, suffice to prove \cref{thm:det lem}. 

Recall that a \emph{pseudoforest} is a graph in which every connected component contains at most one cycle. If $F$ is a subgraph of $G$, we denote by $G-F$ the subgraph of $G$ comprising all edges not in $F$.
\begin{theorem}\label{thm:pseudoforest}
    Every graph $G$ contains a pseudoforest $F \subseteq G$ such that $m_2(G-F)\le m(G)$. 
\end{theorem}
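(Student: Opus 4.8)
The plan is to obtain the desired partition $E(G)=E(F)\sqcup E(G-F)$ from the matroid union theorem. First, if $m(G)\le 1$ then every $J\subseteq G$ has $\e(J)\le\v(J)$, so $G$ is itself a pseudoforest and one may take $F\coloneqq G$; then $G-F$ has no edges and $m_2(G-F)=0\le m(G)$. So assume from now on that $d\coloneqq m(G)>1$, and note that $d$ is rational. On the ground set $E(G)$ I would use two matroids. The first is the bicircular matroid $\mathcal B$, whose independent sets are exactly the pseudoforests; the rank of a set $A$ in $\mathcal B$ is $\v(A)$ minus the number of connected components of $(V(A),A)$ that are trees. The second is the count matroid $\mathcal M$ in which $A$ is independent if and only if $m_2\bigl((V(A),A)\bigr)\le d$, i.e.\ $\e(J)\le d(\v(J)-2)+1$ for every $J\subseteq A$ with $\v(J)\ge3$; for integral $d$ this is the classical $(d,2d-1)$-sparsity matroid, and for rational $d>1$ it is a matroid by the analogous count-matroid theory, with rank $r_{\mathcal M}(A)=\min_{\mathcal P}\sum_{P\in\mathcal P}\bigl(\lfloor d(\v(P)-2)\rfloor+1\bigr)$, the minimum being over partitions $\mathcal P$ of $A$ into nonempty edge sets. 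By the matroid union theorem, $E(G)$ can be partitioned into a pseudoforest and an $\mathcal M$-independent set — which is precisely \cref{thm:pseudoforest} — as soon as $r_{\mathcal B}(A)+r_{\mathcal M}(A)\ge|A|$ for every $A\subseteq E(G)$, so the whole problem comes down to this inequality.

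To prove it, I would first reduce to the case that $(V(A),A)$ is connected, because both rank functions are additive over the components of $(V(A),A)$ — for $\mathcal M$ this uses that $s\mapsto\lfloor d(s-2)\rfloor+1$ is superadditive once $d\ge1$, so any partition can be refined to respect the components without increasing its value. If $(V(A),A)$ is a tree then $r_{\mathcal B}(A)=|A|$ and we are done, so suppose it contains a cycle; then $r_{\mathcal B}(A)=\v(A)$ and it remains to show $r_{\mathcal M}(A)\ge|A|-\v(A)$. Given any partition $\mathcal P$ of $A$, put $\lambda_P\coloneqq\max\bigl\{0,\ \e(P)-\lfloor d(\v(P)-2)\rfloor-1\bigr\}$ for each part; then $\lfloor d(\v(P)-2)\rfloor+1\ge\e(P)-\lambda_P$, so $\sum_{P\in\mathcal P}\bigl(\lfloor d(\v(P)-2)\rfloor+1\bigr)\ge|A|-\sum_P\lambda_P$, and it suffices to show $\sum_P\lambda_P<\v(A)$ (for then, minimizing over $\mathcal P$, $r_{\mathcal M}(A)\ge|A|-\v(A)+1$). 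I would get this from two observations. First, whenever $\lambda_P>0$ one has $\lambda_P\le\v(P)-2$; in the range $\v(P)\le2d+1$ this follows from $\e(P)\le\binom{\v(P)}{2}$ together with the bound $\binom{\v(P)-1}{2}\le\lfloor d(\v(P)-2)\rfloor$ (valid since then $d\ge\tfrac{\v(P)-1}{2}$), and in the range $\v(P)>2d+1$ it follows from the density bound $\e(P)\le e_G(V(P))\le d\,\v(P)$ via a short floor estimate. Second, since the parts of $\mathcal P$ are edge-disjoint, the union $A^{*}$ of all parts with $\lambda_P>0$ satisfies $\sum_{\lambda_P>0}\e(P)=|E(A^{*})|\le d\,|V(A^{*})|\le d\,\v(A)$, using $m(G)\le d$; and each such part has $\e(P)>d(\v(P)-2)$ by definition of $\lambda_P$, so $d\,\v(A)>d\sum_{\lambda_P>0}(\v(P)-2)$, i.e.\ $\sum_{\lambda_P>0}(\v(P)-2)<\v(A)$. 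Combining, $\sum_P\lambda_P=\sum_{\lambda_P>0}\lambda_P\le\sum_{\lambda_P>0}(\v(P)-2)<\v(A)$.

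The step I expect to be the main obstacle is the per-part inequality $\lambda_P\le\v(P)-2$: this is the only point at which the precise ``$-2$'' and ``$+1$'' in the definition of $m_2$ — equivalently, the exact count function of $\mathcal M$ — are used, and carrying it out cleanly requires a little care with floor functions and with deciding which of the two bounds $\e(P)\le\binom{\v(P)}{2}$ and $\e(P)\le d\,\v(P)$ is operative in a given regime. Everything else is comparatively soft: the reduction to connected $A$, the two trivial cases $m(G)\le1$ and $A$ a tree, the edge-disjointness/density argument, and the appeals to the matroid union theorem and to count-matroid theory. One should additionally confirm carefully that $m_2(\cdot)\le d$ really defines a matroid for every rational $d>1$ and that its rank has the partition formula above — this is standard but slightly fiddly bookkeeping rather than a genuine difficulty.
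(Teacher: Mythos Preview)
Your approach has a genuine gap: for non-integer $d>1$, the independence system $\mathcal{M}=\{A\subseteq E(G):m_2(A)\le d\}$ is \emph{not} a matroid, so the matroid union theorem does not apply and your partition formula for $r_{\mathcal{M}}$ has no standing.

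Here is a concrete counterexample, on a graph with $m(G)=d$. Take $G$ on vertex set $\{1,\dots,5\}$ with edge set $\{12,13,14,15,23,34,45\}$; one checks directly that $m(G)=7/5\eqqcolon d$. The constraint $m_2\le d$ then forces every three vertices to span at most two edges. The star $K_{1,4}=\{12,13,14,15\}$ is $\mathcal{M}$-independent, and adding any of the remaining edges $23,34,45$ of $G$ creates a triangle through vertex~$1$; thus $K_{1,4}$ is a \emph{maximal} $\mathcal{M}$-independent subset of $E(G)$, of size~$4$. On the other hand $C_5=\{12,23,34,45,15\}\subseteq E(G)$ is also $\mathcal{M}$-independent (any three of its vertices span a path, and five vertices span $5\le\lfloor 3d\rfloor+1=5$), and adding either remaining edge $13$ or $14$ creates a triangle; so $C_5$ is maximal of size~$5$. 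Two maximal independent subsets of $E(G)$ of different cardinalities means $\mathcal{M}$ violates the exchange axiom. The underlying reason is that $n\mapsto\lfloor d(n-2)\rfloor+1$ is not concave when $d\notin\mathbb{Z}$, which destroys the submodularity that underlies $(k,\ell)$-sparsity matroids in the integer case. This is precisely the obstruction the paper alludes to in its concluding remarks, where it notes that matroid-theoretic techniques handle the integer-$m$ case of the stronger forest conjecture but do not extend.

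The paper's own proof is entirely different and non-matroidal: it fixes an optimal $m$-allocation $\theta$ (a fractional orientation with out-weight at most $m$ at each vertex, whose existence is equivalent to $m(G)\le m$), picks one out-arc of the associated integral digraph $D_\theta$ per vertex to form the pseudoforest $F$, and then rules out a hypothetical violating set $U$ with $\e_{G-F}(U)>m(|U|-2)+1$ by a direct combinatorial analysis exploiting the optimality of $\theta$ and the structure of the terminal components of $D_\theta$.
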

Equivalently, the conclusion of \cref{thm:pseudoforest} can be phrased as saying that $G$ can be edge-partitioned into a pseudoforest and a subgraph of bounded maximal 2-density.
We remark that many results along the lines of \cref{thm:pseudoforest} are known in the literature, starting with Hakimi's theorem \cite{MR0180501} from 1965, which states that $G$ can be decomposed into $k$ pseudoforests if and only if $m(G) \leq k$. More recently, several papers (e.g.\ \cite{2310.00931,MR4125897,MR3383251}) have proved strengthenings of Hakimi's theorem under the assumption that $m(G) \leq m$, where $m$ is not necessarily an integer. The novelty in \cref{thm:pseudoforest} (which does not follow from any of the results mentioned above) is that the condition we guarantee about $G-F$ is that it is sparse with respect to $m_2$, which is a \emph{different} density measure from the one we started with. We remark that \cref{thm:pseudoforest} is a slight weakening of a conjecture of Kuperwasser, Samotij, and Wigderson \cite[Conjecture 1.5]{kuper}; see \cref{sec:conclusion} for details.

On its own, \cref{thm:pseudoforest} already suffices to prove \cref{conj:KK} in almost all cases, namely for all tuples $(H_1,\dots,H_r)$ where $H_2$ contains a strictly $2$-balanced subgraph $H_2'$ with $m_2(H_2)=m_2(H_2')$ such that $H_2'$ is not a cycle; this includes almost all cases that were known before. However, \cref{thm:pseudoforest} cannot be used to resolve the remaining cases, so to prove \cref{thm:det lem} we need another decomposition result. To state it, let us define the \emph{maximal $\frac 43$-density} of $G$ to be
\[
m_{\frac 43}(G) \coloneqq \max \left\{ \frac{\e(J)}{\v(J) - \frac 43} : J \subseteq G, \v(J) \geq 2\right \}.
\]
\begin{theorem}\label{thm:43}
Let $m > \frac 32$ be a real number. Every graph $G$ with $m(G) \leq m$ contains a forest $F \subseteq G$ such that $m_{\frac 43}(G-F)<m$. 
\end{theorem}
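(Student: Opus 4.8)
The natural approach---and the one I would take---is a minimal-counterexample argument together with an exchange (augmenting-path) argument on the forest, in the spirit of the proofs of Nash-Williams' theorem and of the ``nine dragon tree''-type results cited above. For a graph $H$ and a vertex set $S$, write $e_H(S)$ for the number of edges of $H$ with both endpoints in $S$, and for a forest $F\subseteq G$ and a set $S$ with $|S|\ge 2$ put
\[
\phi_F(S)\coloneqq e_{G-F}(S)-m\bigl(|S|-\tfrac43\bigr),
\]
so that $m_{\frac43}(G-F)<m$ is equivalent to $\phi_F(S)<0$ for all $S$ with $|S|\ge 2$; note that since $m>\tfrac32$ this is automatic when $|S|=2$, and more generally small sets cause no trouble. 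Suppose the theorem fails and choose a counterexample $G$ with $\e(G)$, and then $\v(G)$, minimum. Since $m(\cdot)$ and $m_{\frac43}(\cdot)$ are maxima over connected components, $G$ is connected; and by deleting or contracting a vertex of small degree and extending a decomposition of the resulting smaller graph by a suitable set of edges at that vertex, one may assume $G$ has large minimum degree. This ``cleaning'' of the counterexample is routine in outline, but already here the bound $m(G)\le m$ enters, to control how much density a re-inserted vertex can add to a set through it.

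Next, fix a forest $F\subseteq G$ that is lexicographically optimal for the objectives: (i) minimise $\mu\coloneqq\max_{|S|\ge 2}\phi_F(S)$; then (ii) minimise the number of sets with $\phi_F(S)=\mu$; then (iii) maximise $|F|$; with further tie-breaks if needed. Since $G$ is a counterexample, $\mu\ge 0$. Call a set $S$ \emph{tight} if $\phi_F(S)=\mu$. The key structural tool is that $S\mapsto e_{G-F}(S)$ is submodular while $S\mapsto m(|S|-\tfrac43)$ is modular, so $\phi_F$ is submodular; hence for tight sets $S,T$ with $|S\cap T|\ge 2$ both $S\cap T$ and $S\cup T$ are tight and there is no edge of $G-F$ between $S\setminus T$ and $T\setminus S$. (The cases $|S\cap T|\le 1$, and the behaviour of $\phi_F$ on disconnected sets, need separate but elementary treatment; for example, two tight sets cannot meet in exactly two vertices, as a $2$-set has negative $\phi_F$.) Uncrossing in this way, one reduces to studying a minimal tight set $S_0$ and the behaviour of $F$ near it. A first consequence of optimality is that no edge of $(G-F)\cap G[S_0]$ joins two different components of $F$---otherwise adding it to $F$ keeps a forest, does not increase any $\phi_F$, and strictly decreases the count in (ii). Hence every edge of $G-F$ inside $S_0$ closes a fundamental cycle with respect to $F$.

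The final and most delicate step is the exchange. In the favourable case one finds an edge $e'\in(G-F)\cap G[S_0]$ whose fundamental cycle $C$ with respect to $F$ contains an edge $e$ having an endpoint outside $S_0$ and such that every set containing $e$ but not $e'$ has $\phi_F$-value at most $\mu-1$; then $F'\coloneqq F-e+e'$ is again a forest, $\phi_{F'}(S_0)=\mu-1$, and $\phi_{F'}(T)\le\mu$ for every $T$, contradicting the optimality of $F$. In general no single exchange is safe: if swapping $e'$ in and $e$ out would push some set $T$ above $\mu$, then $T$ becomes the next set to repair, and one augments along a path, iterating the exchange. Showing that this process terminates in a genuine improvement is, I expect, the main obstacle. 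The plan would be to track a potential built from $m(G)\le m$---which caps the available excess by $\phi_F(S)\le\tfrac43 m$ for every $S$---together with a monotone combinatorial parameter (such as the size of the innermost set still awaiting repair, or its distance within $F$ from $S_0$). Both $m>\tfrac32$ (so that $2$- and $3$-vertex sets are harmless and the base of the induction is clean) and the exact value $\tfrac43$ (calibrated so that the per-step bookkeeping closes) should enter precisely in making these estimates work. A possible alternative is to first convert $m(G)\le m$ into a fractional orientation with out-degrees at most $m$, round it to an integral orientation with out-degrees at most $\lceil m\rceil$ and few vertices attaining the maximum, and then build $F$ by breaking the cycles of the resulting pseudoforests; but extracting the $m_{\frac43}$ bound on the remainder still seems to need an exchange step of the above kind.
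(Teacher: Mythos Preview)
Your proposal is a plan, not a proof: the decisive step---showing that the iterated exchange along fundamental cycles terminates in a genuine improvement---is explicitly left open (``Showing that this process terminates in a genuine improvement is, I expect, the main obstacle''). Without that argument you have only the scaffolding of a minimal-counterexample/uncrossing scheme, and nothing in what you wrote explains why the bookkeeping should close for the particular constant $\tfrac43$ and the threshold $m>\tfrac32$. (A small slip along the way: $S\mapsto e_{G-F}(S)$ is \emph{super}modular, not submodular; your uncrossing conclusion that tight $S,T$ with $|S\cap T|\ge 2$ have tight union and intersection is still correct, but for the opposite reason.) The reduction steps you sketch before the exchange---deleting/contracting low-degree vertices, uncrossing to a minimal tight set, observing that every $(G-F)$-edge inside it closes a fundamental cycle---are plausible but do none of the real work.

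The paper in fact takes precisely the ``alternative'' route you mention at the end and then set aside. It fixes an \emph{optimal} fractional orientation (an $m$-allocation $\theta$, optimised first to minimise the number of terminal components of the integral digraph $D_\theta$, then the number of fractional edges, etc.), and takes $F$ to be the underlying graph of a \emph{spine} of $D_\theta$: an in-tree inside each strong component together with one outgoing arc from each non-terminal component. A minimal violating set $U$ is then analysed by counting integral versus fractional edges inside $U$ against the pattern $(c,d)$ of terminal components of $D_\theta$ meeting $U$ in at least two, respectively exactly one, vertex; a case analysis over $(c,d)$ and over ranges of $m$ shows that either no such $U$ exists, or $m\in(\tfrac32,\tfrac95]\cup\{\tfrac94\}$ with $(c,d)=(1,0)$ and $|U|\le\tfrac43 m+1$. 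These residual cases are finished by refining the choice of spine (for $m=\tfrac94$, choosing $F$ to minimise the number of $K_4$'s in $G-F$; for $m\le\tfrac95$, rooting each terminal cycle at an arc whose endpoints have no common fractional neighbour). No augmenting-path exchange on $F$ appears; the only ``exchanges'' are weight shifts in the allocation $\theta$ along short cycles, and it is the optimality of $\theta$ that produces the contradictions. So the approach you dismissed is the one that actually carries the proof, and the $m_{\frac43}$ bound on the remainder comes out of the component/edge counting rather than from any exchange on $F$.
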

We remark that the choice of $\frac 43$ is somewhat arbitrary, and our techniques can prove similar results where one replaces $\frac 43$ by another real number. However, working with the maximal $\frac 43$-density is useful for combining \cref{thm:pseudoforest,thm:43} and proving \cref{thm:det lem}.

The rest of this paper is organized as follows. In the next section, we show how to deduce \cref{thm:det lem} from \cref{thm:43,thm:pseudoforest}, and sketch the strategy we employ in the proofs of both \cref{thm:43,thm:pseudoforest}. We prove various general lemmas common to both proofs in \cref{sec:setup}, then prove \cref{thm:pseudoforest} in \cref{sec:pseudoforest}. The proof of \cref{thm:43} is more involved, and is split into \cref{sec:technical lemma}, where we state and prove a key technical lemma, and \cref{sec:43}, where we complete the proof. We end in \cref{sec:conclusion} with some concluding remarks and open problems.

\section{Proof overview}\label{sec:outline}
\subsection{Proof of Theorem \ref{thm:det lem} assuming Theorems \ref{thm:pseudoforest} and \ref{thm:43}}
In addition to the new ingredients, \cref{thm:pseudoforest,thm:43}, we will need three additional lemmas, all of which are well-known in the literature. The first is Nash-Williams' theorem \cite{MR0161333}, which was mentioned above. For its statement, let us define the \emph{maximal 1-density} (also known as the \emph{fractional arboricity}) of a graph $G$ to be
\[
m_1(G) \coloneqq \max \left \{ \frac{\e(J)}{\v(J) - 1} :J\subseteq G, \v(J) \geq 2\right\}.
\]
\begin{lemma}[Nash-Williams]\label{lem:NW}
    Let $k \geq 1$ be an integer. A graph $G$ can be edge-partitioned into $k$ forests if and only if $m_1(G) \leq k$.
\end{lemma}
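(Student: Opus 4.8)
The plan is to reduce the statement to the \emph{matroid union theorem}. The forests of $G$ are precisely the independent sets of the graphic (cycle) matroid $M(G)$, whose ground set is $E(G)$ and whose rank function is $r(F) = \v(G) - c(F)$, where $c(F)$ denotes the number of connected components of the spanning subgraph of $G$ with edge set $F$ (isolated vertices counting as components). An edge-partition of $G$ into $k$ forests is thus exactly the same thing as a partition of the ground set of $M(G)$ into $k$ independent sets, and the matroid union theorem says that such a partition exists if and only if
\[
|F| \le k\cdot r(F) \qquad\text{for every } F \subseteq E(G).
\]
So all that remains is to verify that this family of inequalities is equivalent to the single inequality $m_1(G) \le k$.

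One direction is easy, and in fact already yields the ``only if'' part of the lemma directly: if $G$ decomposes into $k$ forests, then so does every subgraph $J$ with $\v(J) \ge 2$ (just restrict each forest to $J$), and since a forest on $\v(J)$ vertices has at most $\v(J)-1$ edges, this gives $\e(J) \le k(\v(J)-1)$, i.e.\ $m_1(G) \le k$. For the converse I would assume $m_1(G)\le k$, fix an arbitrary $F \subseteq E(G)$, and let $J_1,\dots,J_t$ be the connected components containing at least one edge of the subgraph of $G$ edge-induced by $F$; each has $\v(J_i)\ge 2$, hence $\e(J_i)\le k(\v(J_i)-1)$. As the vertices not covered by $F$ are isolated in the spanning subgraph with edge set $F$ and so contribute equally to $\v(G)$ and to $c(F)$, one gets $\sum_{i=1}^{t}(\v(J_i)-1) = \v(G)-c(F) = r(F)$; summing $\e(J_i)\le k(\v(J_i)-1)$ over $i$ then yields $|F| = \sum_{i=1}^{t}\e(J_i) \le k\cdot r(F)$, which is exactly what is needed.

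The only genuine ingredient is the matroid union theorem itself, so in this route there is essentially no obstacle beyond invoking it: the rest is the routine ``decompose into connected components and count'' step above. If one instead wants a self-contained proof, the ``if'' direction can be obtained by a direct augmenting argument: starting from forests $F_1,\dots,F_k$ whose union is a maximum-size subset of $E(G)$ and an edge $e$ left uncovered, one considers the exchanges that move a single edge from one forest to another without destroying acyclicity, and shows that a shortest alternating chain of such exchanges, traced back from $e$, recolors edges so as to free a color for $e$ --- contradicting maximality. I expect the bookkeeping required to show that such a chain must exist whenever $m_1(G)\le k$ to be the only mildly delicate point along that route, and the matroid-theoretic argument above is exactly what packages that bookkeeping away.
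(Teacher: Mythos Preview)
Your argument via the matroid union theorem is correct: the reduction to the condition $|F|\le k\cdot r(F)$ is standard, and your verification that this is equivalent to $m_1(G)\le k$ by decomposing $F$ into its edge-induced connected components is clean and complete.

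However, there is nothing to compare against: the paper does not prove \cref{lem:NW} at all. It is stated as a classical result and simply attributed to Nash-Williams \cite{MR0161333}, then used as a black box in the deduction of \cref{thm:det lem}. So your proposal is not an alternative to the paper's proof but rather a proof where the paper gives none. For the record, the matroid-union route you take is one of the standard modern proofs of Nash-Williams' theorem (essentially Edmonds' approach); Nash-Williams' original argument was different and more combinatorial, closer in spirit to the augmenting-chain sketch you mention at the end.
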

The second lemma we need is a simple inequality relating $m(G)$ and $m_1(G)$; its short proof can be found, for example, in \cite[Lemma 5.4]{kuper}.
\begin{lemma}\label{lem:m1 m}
    For any graph $G$, we have $m_1(G) \leq m(G) +\frac 12$.
\end{lemma}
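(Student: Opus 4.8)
The plan is to avoid comparing the (possibly different) extremal subgraphs witnessing $m_1(G)$ and $m(G)$, and instead to bound both quantities using a single subgraph. First I would fix a subgraph $J \subseteq G$ with $\v(J) \ge 2$ attaining the maximum in the definition of $m_1(G)$, and abbreviate $n \coloneqq \v(J)$ and $m \coloneqq \e(J)$, so that $m_1(G) = \frac{m}{n-1}$. Since this same $J$ is a subgraph of $G$ on at least one vertex, the definition of $m(G)$ immediately yields $m(G) \ge \frac{m}{n}$.

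It then remains to verify that $\frac{m}{n-1} - \frac{m}{n} \le \frac12$. The left-hand side simplifies to $\frac{m}{n(n-1)}$, and since $J$ is a simple graph on $n$ vertices we have the trivial bound $m = \e(J) \le \binom{n}{2} = \frac{n(n-1)}{2}$; substituting gives $\frac{m}{n(n-1)} \le \frac12$, which is exactly what is needed. Combining the two estimates, $m_1(G) = \frac{m}{n-1} \le \frac{m}{n} + \frac12 \le m(G) + \frac12$.

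I do not anticipate any real obstacle here, as the computation is entirely elementary. The only point worth flagging is the initial reduction: one should resist trying to relate the optimizers of $m_1(G)$ and $m(G)$ to each other (they need not coincide), and instead observe that a subgraph which is dense in the $m_1$-sense automatically serves as a valid witness lower-bounding $m(G)$, after which the bound $\e(J) \le \binom{\v(J)}{2}$ closes the gap of exactly $\tfrac12$.
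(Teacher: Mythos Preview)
Your argument is correct: fixing an optimizer $J$ for $m_1(G)$, using it as a witness for $m(G)$, and closing the gap via $\e(J)\le\binom{\v(J)}{2}$ is exactly the right computation. Note that the paper itself does not give a proof of this lemma but merely cites \cite[Lemma~5.4]{kuper}; your short self-contained argument is the standard one and nothing more is needed.
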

Finally, we quote a simple result proved independently in \cite[Proposition 5.2(a)]{kuper} and \cite[Lemma 1.9(ii)]{bowtell}, which establishes \cref{thm:det lem} whenever $H_2$ is not bipartite. 
\begin{lemma}\label{lem:bipartite}
    Let $H_1,H_2$ be graphs with $m_2(H_1) > m_2(H_2)>1$. If $H_2$ is strictly $2$-balanced and not bipartite, then any graph $G$ which is Ramsey for $(H_1,H_2)$ satisfies $m(G) > m_2(H_1,H_2)$.
\end{lemma}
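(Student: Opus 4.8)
The plan is to prove the contrapositive: assuming $m(G)\le m_2(H_1,H_2)$, we construct a red/blue colouring of $E(G)$ with no red copy of $H_1$ and no blue copy of $H_2$. This is exactly where non-bipartiteness enters: since $H_2$ is not bipartite, every bipartite subgraph of $G$ is automatically $H_2$-free, so it suffices to find a vertex partition $V(G)=X\sqcup Y$ for which $G[X]\cup G[Y]$ is $H_1$-free. Indeed, given such a partition, colour every edge lying inside $X$ or inside $Y$ red and every edge across the cut blue; the blue graph is then bipartite, hence contains no copy of $H_2$, and the red graph is exactly $G[X]\cup G[Y]$, hence contains no copy of $H_1$.

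To build this partition I would first pass to a minimal dense subgraph of $H_1$. Let $H_1'\subseteq H_1$ be a subgraph on at least two vertices chosen with the fewest vertices possible subject to attaining the maximum $m_2(H_1,H_2)=\max\{\e(J)/(\v(J)-2+1/m_2(H_2)):J\subseteq H_1,\ \v(J)\ge 2\}$. Since a copy of $H_1$ inside $G[X]\cup G[Y]$ would contain a copy of $H_1'$, it is enough to find a partition for which $G[X]\cup G[Y]$ is merely $H_1'$-free. Standard extremal arguments show $H_1'$ is connected with at least three vertices (a single edge would only achieve ratio $m_2(H_2)<m_2(H_1,H_2)$); more importantly, minimality forces the ratio to drop strictly whenever a vertex $u$ is deleted, so $\e(H_1'-u)<m_2(H_1,H_2)\,(\v(H_1')-3+1/m_2(H_2))$, which together with the equality $\e(H_1')=m_2(H_1,H_2)\,(\v(H_1')-2+1/m_2(H_2))$ rearranges to $d_{H_1'}(u)>m_2(H_1,H_2)$ for every vertex $u$. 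In particular $\delta(H_1')>m_2(H_1,H_2)\ge m(G)$.

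Now I would construct the partition greedily along a degeneracy ordering of $G$. Since $m(G)\le m_2(H_1,H_2)$, in such an ordering every vertex has at most $\lfloor 2m(G)\rfloor<2\delta(H_1')$ neighbours preceding it. Process the vertices in order, maintaining the invariant that, restricted to the already-placed vertices, $G[X]\cup G[Y]$ is $H_1'$-free. When the next vertex $v$ is reached, putting $v$ into a part $Z\in\{X,Y\}$ can create a copy of $H_1'$ inside $Z$ only through a copy using $v$, in which $v$ plays the role of some vertex $w$ of $H_1'$; this forces at least $d_{H_1'}(w)\ge\delta(H_1')$ of the preceding neighbours of $v$ to lie in $Z$. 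As $v$ has fewer than $2\delta(H_1')$ preceding neighbours in total, it cannot have at least $\delta(H_1')$ of them in each of $X$ and $Y$, so at least one placement is safe; take it. Since $H_1'$ is connected, any copy of $H_1'$ in $G[X]\cup G[Y]$ lies within a single part, so the invariant persists, and at the end $G[X]\cup G[Y]$ is $H_1'$-free, as required.

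The sole genuinely quantitative input is the inequality $\delta(H_1')>m_2(H_1,H_2)$, which is precisely what makes the per-vertex degree budget $\lfloor 2m(G)\rfloor$ too small to block both parts simultaneously; everything else is a straightforward greedy argument. I expect the fiddliest point to be the routine bookkeeping around the structure of the extremal subgraph $H_1'$ (its connectedness, and the fact that a minimal-vertex maximiser is well-behaved rather than, e.g., carrying a pendant edge). Finally, note that non-bipartiteness of $H_2$ is used here only through the implication ``bipartite $\Rightarrow$ $H_2$-free'', which is exactly why this short argument cannot handle bipartite $H_2$ and the rest of the paper is needed.
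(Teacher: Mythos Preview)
Your argument is correct and complete. The present paper does not itself prove this lemma but quotes it from the references of Kuperwasser--Samotij--Wigderson and Bowtell--Hancock--Hyde; the degeneracy-plus-greedy-bipartition approach you give (pass to a vertex-minimal maximiser $H_1'$ with $\delta(H_1')>m_2(H_1,H_2)$, order $V(G)$ by degeneracy so each vertex sees at most $\lfloor 2m(G)\rfloor<2\delta(H_1')$ predecessors, then place greedily into two parts) is exactly the standard argument used there.
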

With these preliminaries, we are ready to prove \cref{thm:det lem}.
\begin{proof}[Proof of \cref{thm:det lem}]
    Fix a pair of graphs $H_1,H_2$ with $m_2(H_1) > m_2(H_2) > 1$. Recall that this implies that $m_2(H_1) > m_2(H_1,H_2)>m_2(H_2)$. We may assume without loss of generality that $H_2$ is strictly $2$-balanced; indeed, if it is not, we pass to a strictly $2$-balanced subgraph $H_2' \subseteq H_2$ with $m_2(H_2') = m_2(H_2)$, and observe that any $H_2'$-free graph is necessarily $H_2$-free. Thus, we henceforth assume that $H_2$ is strictly $2$-balanced. This in particular implies that $H_2$ is connected and that the minimum degree of $H_2$ is at least $2$. (In fact, any strictly $2$-balanced graph is $2$-connected as shown in \cite[Lemma 3.3]{rajkosteger}.)

    Let us fix a graph $G$ with $m(G) \leq m_2(H_1,H_2)$. Our goal is to edge-partition $G$ into an $H_1$-free graph and an $H_2$-free graph. We are done by \cref{lem:bipartite} if $H_2$ is not bipartite, so let us assume that $H_2$ is bipartite.

    If $m(G) \leq \frac 32$, then $m_1(G) \leq 2$ by \cref{lem:m1 m}, and therefore \cref{lem:NW} implies that $G$ can be edge-partitioned into two forests. As $m_2(H_1) > m_2(H_2)>1$, neither $H_1$ nor $H_2$ is a forest, and thus this is an edge-partition of $G$ into an $H_1$-free graph and an $H_2$-free graph.

    Now suppose that $H_2$ contains at least two cycles. As $H_2$ is connected, this implies that $H_2$ is not contained in any pseudoforest. We now apply \cref{thm:pseudoforest}  to find a pseudoforest $F \subseteq G$ with $m_2(G-F) \leq m(G)\leq m_2(H_1,H_2)<m_2(H_1)$. By the above, $F$ is $H_2$-free. Moreover, as $m_2(G-F) < m_2(H_1)$, we see that $G-F$ is $H_1$-free. This is our desired decomposition.

    Finally, we may assume that $m(G) > \frac 32$, and that $H_2$ is connected, bipartite, and contains only one cycle. As the minimum degree of $H_2$ is at least $2$, we find that $H_2=C_\ell$ is a cycle, for some even $\ell \geq 4$. As $m_2(C_\ell) = (\ell-1)/(\ell-2)$, we conclude that $m_2(H_2) \leq m_2(C_4) = \frac 32$. Therefore,
    \begin{align*}
    m_2(H_1,H_2) &= \max_{J \subseteq H_1} \frac{\e(J)}{\v(J) - 2 + 1/m_2(H_2)}\\
    &\leq \max_{J \subseteq H_1} \frac{\e(J)}{\v(J) - 2 + 1/(\frac 32)}\\
    &=\max_{J \subseteq H_1} \frac{\e(J)}{\v(J) - \frac 43} = m_{\frac 43}(H_1).
    \end{align*}
    We now apply \cref{thm:43} with $m = m(G) > \frac 32$. We find a forest $F \subseteq G$ with $m_{\frac 43}(G-F) < m$. As $H_2$ is not a forest, we see that $F$ is $H_2$-free, whereas the above implies that $m_{\frac 43}(G-F) < m \leq m_2(H_1,H_2) \leq m_{\frac 43}(H_1)$, and thus $G-F$ is $H_1$-free. This is the desired decomposition, and the proof is complete.
\end{proof}
\subsection{Proof strategy and sketch}
We now sketch the strategy we employ to prove \cref{thm:pseudoforest,thm:43}. The same general strategy is used for both; we begin with \cref{thm:pseudoforest}, which is somewhat simpler both conceptually and technically.

So let us fix some real number $m >0$ and some graph $G$ with $m(G) \leq m$. Our goal is to define a pseudoforest $F \subseteq G$, in such a way that we can control $m_2(G-F)$. Our main tool to do this is the concept of \emph{allocations}, which are a fractional version of an edge-orientation of $G$.

\begin{definition}
Let $G$ be a graph and $m\ge 0$ a real number. An $m$-\emph{allocation} of $G$ is a mapping $\theta:V(G)^2\rightarrow [0,1]$ such that the following hold. 
\begin{enumerate}[ref=(\arabic*)]
    \item If $u, v \in V(G)$ and $uv \notin E(G)$, then $\theta(u,v)=\theta(v,u)=0$.
    \item $\theta(u,v)+\theta(v,u)=1$ for every edge $uv \in E(G)$.  
    \item\label{it: outdegree bound} For every vertex $u \in V(G)$, we have $\sum_{v \in V(G)}{\theta(u,v)}\le m$. 
\end{enumerate}
\end{definition}
Note that if $\theta$ is integer-valued, it can be viewed as an edge-orientation of $G$ (namely we orient an edge $uv$ according to whether $\theta(u,v)$ is $0$ or $1$). If $\theta$ is integer-valued, then condition \ref{it: outdegree bound} above corresponds to saying that every vertex has outdegree at most $m$. In general, allocations can be thought of as fractional orientations, with an upper bound on the ``fractional outdegree'' of every vertex.
The reason we care about allocations is that the existence of an $m$-allocation of $G$ is equivalent to the statement $m(G) \leq m$, as stated in the next theorem. 

\begin{theorem}\label{thm:allocations}
Let $G$ be a graph and $m>0$ a real number. Then $m(G)\le m$ if and only if $G$ admits an $m$-allocation.
\end{theorem}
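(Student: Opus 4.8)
The plan is to reduce the statement to the max-flow--min-cut theorem, by building a flow network whose feasible flows of maximum value correspond exactly to $m$-allocations of $G$. The easy direction is $(\Leftarrow)$: if $\theta$ is an $m$-allocation and $J\subseteq G$ has $\v(J)\ge 1$, set $S=V(J)$ and sum property (2) of $\theta$ over the edges of $J$, re-indexing over ordered pairs:
\[
\e(J)=\sum_{uv\in E(J)}\bigl(\theta(u,v)+\theta(v,u)\bigr)=\sum_{u\in S}\ \sum_{v\,:\,uv\in E(J)}\theta(u,v)\ \le\ \sum_{u\in S}\ \sum_{v\in V(G)}\theta(u,v)\ \le\ m\lvert S\rvert,
\]
using nonnegativity of $\theta$ for the first inequality and property \ref{it: outdegree bound} for the second. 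Hence $\e(J)/\v(J)\le m$ for every such $J$, so $m(G)\le m$.

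For the converse I would introduce a flow network $N$ with a source $s$, a sink $t$, a node $x_e$ for each edge $e\in E(G)$ and a node $y_v$ for each vertex $v\in V(G)$; it has an arc $s\to x_e$ of capacity $1$ for each $e$, arcs $x_e\to y_u$ and $x_e\to y_v$ of capacity $+\infty$ for each $e=uv$, and an arc $y_v\to t$ of capacity $m$ for each $v$. Since the flow value equals $\sum_e f(s\to x_e)$ and each such arc has capacity $1$, every $s$--$t$ flow has value at most $\e(G)$, and I claim $N$ has a flow of value exactly $\e(G)$ if and only if $G$ has an $m$-allocation. Given such a flow $f$ — which then saturates every arc $s\to x_e$, so exactly one unit of flow passes through each $x_e$ — define $\theta(u,v):=f(x_e\to y_u)$ when $e=uv\in E(G)$ and $\theta(u,v):=0$ otherwise; flow conservation at $x_e$ gives $\theta(u,v)+\theta(v,u)=1$ (so in particular $\theta(u,v)\in[0,1]$), and conservation at $y_u$ together with the capacity of $y_u\to t$ gives $\sum_v\theta(u,v)=f(y_u\to t)\le m$. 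Thus $\theta$ is an $m$-allocation, and reversing the construction turns any $m$-allocation into a flow of value $\e(G)$.

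It remains to show that $N$ has a flow of value $\e(G)$ precisely when $m(G)\le m$, which I would deduce from the max-flow--min-cut theorem (valid for arbitrary nonnegative real capacities, e.g.\ via linear-programming duality). Because the arcs $x_e\to y_v$ have infinite capacity, a finite $s$--$t$ cut can never put $x_e$ on the source side with an endpoint $y_v$ on the sink side; hence such a cut is determined by the set $S=\{v:y_v\text{ on the source side}\}$, and the cheapest cut for a given $S$ cuts $y_v\to t$ for each $v\in S$ and $s\to x_e$ for each edge $e$ with an endpoint outside $S$, of total capacity $m\lvert S\rvert+\e(G)-e_G(S)$, where $e_G(S)$ is the number of edges of $G$ with both endpoints in $S$. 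Therefore the minimum cut equals $\e(G)$ if and only if $e_G(S)\le m\lvert S\rvert$ for every $S\subseteq V(G)$, and since $m(G)=\max_{\emptyset\ne S\subseteq V(G)}e_G(S)/\lvert S\rvert$ (the densest subgraph on a fixed vertex set being the induced one), this is exactly the condition $m(G)\le m$. Combining this with the correspondence of the previous paragraph completes the proof. I expect the only points needing care to be the min-cut computation and the use of max-flow--min-cut for real rather than integral capacities; everything else is routine bookkeeping.
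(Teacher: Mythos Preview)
Your proof is correct and follows essentially the same approach as the paper: both build a two-layer flow network with a node per vertex and a node per edge, and deduce the result from max-flow--min-cut. The only cosmetic differences are that your network is the reverse of the paper's (you send flow $s\to\text{edges}\to\text{vertices}\to t$ with capacities $1$, $\infty$, $m$, whereas the paper sends $s\to\text{vertices}\to\text{edges}\to t$ with capacities $m$, $\infty$, $1$), and you handle the easy direction $(\Leftarrow)$ by a direct double-count rather than via the cut inequality.
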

Results of this type go back at least to work of Hakimi \cite{MR0180501} and of Frank and Gy\'arf\'as \cite{MR0519276}, who proved \cref{thm:allocations} under the extra assumption that $m$ is an integer (in which case they can get a true orientation, rather than an allocation). Their result actually implies \cref{thm:allocations} by passing to an appropriate auxiliary multigraph; one can also deduce \cref{thm:allocations} from the results of \cite{MR1080823}. We present an alternative proof, which is closely modeled on the proof of \cite[Lemma 2]{MR4396478}, in \cref{sec:allocations proof}. For the moment we continue with the proof overview.

As we are given a graph $G$ with $m(G) \leq m$, we apply \cref{thm:allocations} to find an $m$-allocation $\theta:V(G)^2 \to [0,1]$.
We say that an edge $e=uv\in E(G)$ is \emph{integral} if $\theta(u,v),\theta(v,u) \in \{0,1\}$. Otherwise, if $\theta(u,v), \theta(v,u) \in (0,1)$, then we say that the edge $uv$ is \emph{fractional}. We denote by $E_{\text{int}}(G,\theta)$ and $E_{\text{frac}}(G,\theta)$ the set of integer and fractional edges of $G$, respectively. Moreover, we can naturally associate with $\theta$ a digraph $D_\theta=(V(G),A_\theta)$, where $A_\theta\coloneqq \{(u,v):\theta(u,v)=1\}$. Clearly, $D_\theta$ is an orientation of the integral edges in $G$. Similarly, we denote by $G^{\text{frac}}_\theta$ the subgraph of $G$ consisting of the fractional edges $E_{\text{frac}}(G,\theta)$.

There are a number of elementary operations one can perform on an $m$-allocation to get another $m$-allocation. By doing such operations, we may assume that $\theta$ satisfies certain additional properties; for example, we can ensure that $G_{\text{frac}}^\theta$ is a forest (see \cref{lem: forest}).

Having chosen $\theta$ so that it satisfies these extra properties, we are ready to define the pseudoforest $F$ that we need to output in order to prove \cref{thm:pseudoforest}. To do so, for every vertex $u \in V(G)$ that is the source of at least one arc in $D_\theta$, we pick one such arc $(u,v)$ arbitrarily, and define $\overrightarrow F$ to consist of all chosen arcs. Thus, $\overrightarrow F$ is a digraph with maximum outdegree $1$, so its underlying graph $F \subseteq G$ is a pseudoforest.

Having defined $F$, it remains to prove the main claim in \cref{thm:pseudoforest}, namely that $m_2(G-F) \leq m$. Equivalently, we wish to prove that for every $U \subseteq V(G)$ with $|U| \geq 3$, we have $\e_{G-F}(U) -1 \leq m(|U|-2)$, where we use the notation $\e_{G-F}(U) \coloneqq \e((G-F)[U])$. So we assume for contradiction that this is not the case, and proceed to analyze the structure of a violating set $U$. The proof is somewhat involved and we defer the details to \cref{sec:pseudoforest}, but the basic idea is to examine how $U$ interacts with the digraph $D_\theta$, and to use both the definition of $F$ and the extra properties we imposed on the $m$-allocation $\theta$.

The proof of \cref{thm:43} follows a similar strategy, but is more involved for a few different reasons. The first reason is that in \cref{thm:43}, we wish to output a \emph{forest} $F \subseteq G$. So we cannot simply pick an out-arc in $D_\theta$ for every vertex of $G$, as this creates cycles. However, by examining the structure of the digraph $D_\theta$ (specifically, by partitioning it into strongly connected components), we can pick an out-arc for \emph{almost} every vertex, and control the set of vertices with no out-arcs, while ensuring that the subgraph we so define is a forest. Having done this, we again assume for contradiction that $m_{\frac 43}(G-F) \geq m$, and work with a violating set $U\subseteq V(G)$, namely a set $U$ with $|U|\geq 2$ satisfying $\e_{G-F}(U) \geq m(|U|-\frac 43)$. The analysis of $U$ in this proof is substantially more complicated, and we need to split into a number of different cases, depending on the intersection pattern of $U$ with the strongly connected components of $D_\theta$. Most of these cases can be dealt with via rather simple (but tedious) arguments; this is done in \cref{sec:technical lemma}, where we state and prove a technical lemma dealing with most of these cases. For the remaining cases, some more care needs to be taken in the definition of $\theta$ and the choice of $F$; by imposing some further conditions on $\theta$ and $F$, we are able to dispense with the remaining cases in \cref{sec:43}.

\section{Setup and general lemmas}\label{sec:setup}
In this section, we state and prove various general lemmas that we will use in the proofs of both \cref{thm:43,thm:pseudoforest}.
\subsection{Proof of Theorem \ref{thm:allocations}}\label{sec:allocations proof}
We begin with the proof of \cref{thm:allocations} (which we deferred in \cref{sec:outline}), following the proof approach of \cite[Lemma 2]{MR4396478}. As discussed above, there are a number of other ways of proving this result, including a reduction to \cite[Theorem 1]{MR0519276} or an application of Hall's matching theorem.
\begin{proof}[Proof of \cref{thm:allocations}]
    Associated to the graph $G$ and the parameter $m$, we define a flow network $F$ as follows. $F$ has a single source vertex $s$ and a single sink vertex $t$, as well as two other sets of vertices $V,E$. The vertices in $V$ are identified with $V(G)$, and the vertices in $E$ are identified with $E(G)$. There are directed edges $s \to V \to E \to t$, defined as follows. For every $v \in V$, we place a directed edge of capacity $m$ from $s$ to $v$. For every $v \in V$ and $e \in E$ such that $v$ is incident to $e$ in $G$, we place an edge of infinite capacity from $v$ to $e$. Finally, for every $e \in E$, we place a directed edge of capacity $1$ from $e$ to $t$.

    The max-flow min-cut theorem implies that the maximum $s-t$ flow in $F$ equals the minimum weight of an $s-t$ cut in $F$. As all edges between $V$ and $E$ have infinite capacity, they do not participate in any minimum cut. Hence any minimum cut must consist of some edges between $s$ and $V$, and some edges between $E$ and $t$. Fix such a minimum cut, and let $A \subseteq V, B \subseteq E$ be the set of vertices in $V,E$, respectively, incident to a cut edge. The weight of this cut is $m|A| + |B|$. For this to be a true $s-t$ cut in $F$, we must have no directed edges from $V \setminus A$ to $E \setminus B$; by the definition of the edges $V \to E$, this implies that $E \setminus B$ consists only of edges with both endpoints in $A$. Therefore, $|E \setminus B| \leq \e(G[A])$, and thus $|B| \geq \e(G) - \e(G[A])$. Therefore the weight of the cut is at least $m|A| + \e(G) -\e(G[A])$. Conversely, we see that for any $A \subseteq V$, there is a cut of weight exactly $m|A| + \e(G) - \e(G[A])$, obtained by defining $E \setminus B$ to be the set of edges entirely contained in $A$.

    Suppose first that $m(G) \leq m$. In particular, we find that $\frac{\e(G[A])}{|A|} \leq m$, implying that the weight of the minimum cut is at least $\e(G)$. The cut separating $t$ from the other vertices has weight $\e(G)$, as there are $\e(G)$ edges of capacity $1$ incident to $t$, and is thus a minimum cut. Therefore, there exists an $s-t$ flow in $F$ of weight exactly $\e(G)$. In such a flow every vertex in $E$ must receive weight exactly $1$. This means that for any $e=uv \in E(G)$, we have a total flow of $1$ into the node $e$, coming from its neighbours $u,v \in V$. Defining $\theta(u,v)$ to be the flow $u$ sends to $e$, and similarly $\theta(v,u)$ the flow $v$ sends to $e$, we find that $\theta(u,v)+\theta(v,u)=1$. Moreover, the fact that every $u \in V$ is incident to an edge of capacity $m$ from $s$ shows that this defines a valid $m$-allocation.

    Conversely, suppose that there is an $m$-allocation of $G$. As in the previous paragraph, this defines an $s-t$ flow of weight $\e(G)$ in $F$. Thus, every cut must have weight at least $\e(G)$. Therefore, for any $A \subseteq V$, we have that $m|A| + \e(G) - \e(G[A]) \geq \e(G)$, implying that $m(G) \leq m$.
\end{proof}

\subsection{Strong components in digraphs}
Let us fix the following terminology. For a directed graph $D$, we denote by $A(D)$ the set of arcs (directed edges) in $D$. Given a directed graph $D$, a \emph{strong component} of $D$ is an inclusion-wise maximal subset $X \subseteq V$ of vertices such that the induced subdigraph $D[X]$ is strongly connected. It is well-known that the strong components of a digraph $D$ partition its vertex-set. We call a strong component $X$ of $D$ a \emph{terminal component} if there are no arcs in $D$ that start in a vertex in $X$ and end in a vertex outside of $X$. We will need the following statements about how changing a digraph affects the number and structure of its terminal components.
\begin{lemma}\label{lem:components}
\hfill
    \begin{lemenum}
        \item\label{it:subgraph} Let $D_1$ and $D_2$ be digraphs on the same vertex-set such that $A(D_1)\subseteq A(D_2)$. Then the number of terminal components of $D_2$ is at most as large as the number of terminal components of $D_1$. Furthermore, if there exist distinct terminal components $U, W$ of $D_1$ and a directed path in $D_2$ that starts in $U$ and ends in $W$, then the number of terminal components of $D_2$ is strictly smaller than the number of terminal components of $D_1$. 
        \item\label{it:delete arc} Let $X$ be a terminal component of a digraph $D$ and let $(u,v) \in A(D[X])$. Then the collection of terminal components of $D-(u,v)$ is obtained from the collection of terminal components of $D$ by replacing $X$ with a subset of it that contains $u$.
        \item\label{it:outside terminal} Let $D$ be a digraph and let $u$ be a vertex of $D$ that does not belong to any terminal component. If there exists a directed path in $D$ that starts in $u$, ends in a terminal component of $D$ and does not use the arc $(u,v)$, then the digraphs $D-(u,v)$ and $D$ have the same terminal components. 
        \item\label{item:adduselessedge} Let $D$ be a digraph and let $u$ be a vertex that does not belong to any terminal component. Let $D+(u,v)$ be the digraph obtained by adding a new arc $(u,v)$ to $D$. Then $D$ and $D+(u,v)$ have the same terminal components.
        \item\label{it:singleton} Let $D$ be a digraph and $\{u\}$ a singleton terminal component of $D$, and let $D+(u,v)$ be obtained from $D$ by adding a new arc $(u,v)$ to $D$. Then one of the following holds.
        \begin{itemize}
            \item The number of terminal components of $D+(u,v)$ is smaller than that of $D$.
            \item The collection of terminal components of $D+(u,v)$ is obtained from the collection of terminal components of $D$ by replacing $\{u\}$ with a terminal component containing $\{u,v\}$.
        \end{itemize}
    \end{lemenum}
\end{lemma}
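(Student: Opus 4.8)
The plan is to reduce everything to the standard structure theory of strong components. Recall that the \emph{condensation} of a digraph $D$ --- obtained by contracting each strong component to a single vertex --- is acyclic, and that the terminal components of $D$ are exactly its sinks; in particular every digraph has at least one terminal component, and from every vertex of $D$ there is a directed path to some terminal component. I will also use throughout that if $A(D_1)\subseteq A(D_2)$ then reachability in $D_1$ implies reachability in $D_2$, so that each strong component of $D_2$ is a union of strong components of $D_1$. The engine of the whole lemma is part \cref{it:subgraph}, which I would prove via the following observation: every terminal component $T$ of $D_2$ contains at least one terminal component of $D_1$. Indeed, picking $x\in T$, there is a $D_1$-path, hence a $D_2$-path, from $x$ to some terminal component $T'$ of $D_1$; as $T$ is terminal in $D_2$, this forces $T'\subseteq T$. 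Since distinct terminal components of $D_2$ are disjoint, choosing one such $T'$ inside each yields an injection from the terminal components of $D_2$ into those of $D_1$, which gives the counting inequality. For the strict part, given a directed $U\rightsquigarrow W$ path in $D_2$ between distinct terminal components $U,W$ of $D_1$: if $U$ lies inside a terminal component $T$ of $D_2$, then so does $W$ (it is reachable from $U$, and $T$ is terminal), so $T$ contains two terminal components of $D_1$; if instead $U$ lies inside no terminal component of $D_2$, then the injection above misses $U$. Either way the number of terminal components of $D_2$ is strictly smaller.

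Parts \cref{item:adduselessedge} and \cref{it:outside terminal} follow the same template. For \cref{item:adduselessedge}: since $u$ lies in no terminal component of $D$, the new arc $(u,v)$ is not an out-arc of any terminal component $T$ of $D$, so $T$ still has no out-arc in $D+(u,v)$ and hence cannot reach anything outside itself, so it remains a strong --- and now terminal --- component; thus all terminal components of $D$ persist, and by part \cref{it:subgraph} there are no others. For \cref{it:outside terminal}: dually, every terminal component of $D$ remains one in $D-(u,v)$, and to rule out new ones one takes a terminal component $T'$ of $D-(u,v)$ and argues that if $u\notin T'$ then (removing an arc only splits strong components, and a proper subset of a strong component has an outgoing arc) $T'$ is already terminal in $D$, while if $u\in T'$ then the hypothesised $u\rightsquigarrow W$ path avoiding $(u,v)$ lives in $D-(u,v)$ and forces $W\subseteq T'$, hence $T'=W$ by maximality of $W$ --- a terminal component of $D$. (One can alternatively deduce \cref{it:outside terminal} from \cref{item:adduselessedge} after checking that $u$ lies in no terminal component of $D-(u,v)$.)

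For parts \cref{it:delete arc} and \cref{it:singleton} the extra work is to understand what happens near $u$. In \cref{it:delete arc}, deleting the internal arc $(u,v)$ of the terminal component $X$ changes nothing outside $X$, so the terminal components of $D-(u,v)$ are the old ones other than $X$ together with the sinks of the condensation of $D[X]-(u,v)$; letting $Y$ be the strong component of $D[X]-(u,v)$ containing $u$, I would show $Y$ is the unique such sink by using strong connectivity of $D[X]$ and truncating any path to $u$ at its first visit to $u$, which therefore avoids $(u,v)$, so that every vertex of $X$ reaches $Y$ in $D[X]-(u,v)$; hence $Y$ has no out-arc while every other strong component of $D[X]-(u,v)$ does. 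In \cref{it:singleton}, $u$ has no out-arc in $D$; let $Y$ be the strong component of $u$ in $D+(u,v)$. The terminal components $T\neq\{u\}$ of $D$ persist, and by part \cref{it:subgraph} the number of terminal components does not increase, so if it does not strictly decrease then exactly one terminal component $Z$ of $D+(u,v)$ fails to be a terminal component of $D$; arguing as in \cref{it:outside terminal} gives $u\in Z$, whence $Z=Y$, and since $Z$ is terminal while $u\in Z$ has the out-arc $(u,v)$ we get $v\in Z$, which is precisely the structural alternative.

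I expect the main obstacle to be the bookkeeping in parts \cref{it:delete arc} and \cref{it:singleton}: verifying that after deleting an internal arc of a terminal component the new terminal components collapse to the single set $Y$ containing $u$, and cleanly separating the ``number decreases'' case from the structural case in \cref{it:singleton}. Everything else reduces to part \cref{it:subgraph} together with routine manipulation of the condensation DAG.
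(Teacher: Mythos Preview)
Your proposal is correct and follows essentially the same route as the paper: part \ref{it:subgraph} via an injection (each terminal component of $D_2$ contains one of $D_1$), parts \ref{item:adduselessedge} and \ref{it:outside terminal} by checking that terminal components persist and invoking \ref{it:subgraph} to rule out extras, and parts \ref{it:delete arc}, \ref{it:singleton} by analysing what happens inside the component of $u$; your condensation-DAG phrasing is a clean repackaging of the paper's arguments. One small slip in \ref{it:outside terminal}: in the case $u\in T'$ your chain $W\subseteq T'$ and ``$T'=W$ by maximality of $W$'' is actually a contradiction (you assumed $u\in T'$ while $u\notin W$), so the correct conclusion is that this case is impossible---which is exactly how the paper resolves it---rather than that $T'$ coincides with $W$.
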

\begin{proof}\hfill
\begin{enumerate}[label=(\alph*)]
    \item\label{proofit:path} For every strong component $X$ of $D_1$, we have that $D_1[X]$ and thus $D_2[X]$ is strongly connected, and hence $X$ is fully contained in one of the strong components of $D_2$. This directly implies that every strong component of $D_2$ is a disjoint union of some strong components of $D_1$. Now let $Y$ be any terminal component of $D_2$. Let $X_1,\ldots,X_k$ be the strong components of $D_1$ such that $Y=\bigcup_{i=1}^{k}{X_i}$. Consider the digraph $D_1[Y]$, whose strong components are $X_1,\ldots,X_k$. It is well-known (and easy to see) that every digraph contains at least one terminal component. Hence, there exists $i \in \{1,\ldots,k\}$ such that there are no edges in $D_1[Y]$ that leave $X_i$. But since $Y$ is a terminal component of $D_2\supseteq D_1$, this means that there are also no edges in the whole of $D_1$ that leave $X_i$, and hence $X_i$ is a terminal component of $D_1$ that is contained in $Y$. 
    
    We have shown that every terminal component of $D_2$ contains a terminal component of $D_1$, and hence the number of terminal components of $D_2$ is at most the number of terminal components of $D_1$. 
    
   Let $P$ be a directed path in $D_2$ that starts in $U$ and ends in $W$. We claim that either $U$ and $W$ are contained in the same terminal component of $D_2$, or at least one of $U, W$ is not contained in any terminal component of $D_2$. In both cases, we can immediately conclude that there must be strictly more terminal components in $D_1$ than in $D_2$, as desired.
    
    Suppose that contrary to the above claim, there exist two distinct terminal components $Y_1, Y_2$ of $D_2$ such that $U\subseteq Y_1$ and $W\subseteq Y_2$. Since $P$ starts in $U$ and ends in $W$, this implies that there exists an arc in $D_2$ leaving $Y_1$, a contradiction to it being a terminal component. This concludes the proof. 
    \item\label{proofit:delete edge} Let $Y$ be a terminal component of $D-(u,v)$. If $u \notin Y$, then $Y$ is a terminal component of $D$ that is disjoint from $X$. So suppose $u \in Y$. Since every other vertex in $Y$ is reachable via a directed path from $u$ in $D-(u,v)$, and since no arc in $D-(u,v)$ leaves $Y$ and no arc in $D$ leaves $X$, it follows that $Y$ is equal to the set of vertices reachable in $D-(u,v)$ via a directed path starting in $u$, and that $Y\subseteq X$. In fact, it can be easily checked that the latter set of vertices always induces a strongly connected subdigraph of $D-(u,v)$ (since every vertex in $X$ can reach $u$ via a directed path in $D-(u,v)$). Hence, we have shown that the set of vertices reachable from $u$ in $D-(u,v)$ forms the unique terminal component of $D-(u,v)$ contained in $X$.

    In the other direction, note that any terminal component of $D$ distinct from $X$ is also disjoint from $X$ and hence remains a terminal component also in $D-(u,v)$. This shows that the terminal components of $D$ and $D-(u,v)$ are the same, apart from the replacement of $X$ by a subset containing $u$.
    \item Let $X$ be a terminal component of $D-(u,v)$. Unless $u \in X$ and $v \notin X$, also in $D$ there are no arcs leaving $X$ and thus $X$ forms a terminal component also of $D$. Now suppose $u \in X$ and $v \notin X$. Then $(u,v)$ is the only arc in $D$ leaving $X$. Let $P$ be a directed path in $D-(u,v)$ starting in $u$ and ending in a terminal component $Y$ of $D$. Then, since $P$ cannot leave $X$, we have $V(P)\subseteq X$ and thus $X\cap Y\neq \emptyset$. Since $D[X]$ is strongly connected and $Y$ is a strong component of $D$, by definition, we have $X\subseteq Y$. In particular, we have $u \in Y$, which contradicts our assumption that $u$ is not part of any terminal component of $D$. This contradiction shows that our assumption above was wrong, and hence $X$ is indeed a terminal component also of $D$.

    In the other direction, suppose $X$ is some terminal component of $D$. 
    We assumed that $u$ is not contained in any terminal component of $D$, hence $u \notin X$. But then necessarily $(u,v) \notin A(D[X])$, and thus $X$ is also a terminal component of $D-(u,v)$.
    \item Let $X$ be a terminal component of $D$. Then by assumption $u \notin X$, and hence also in $D+(u,v)$ no arc leaves $X$. Hence, $X$ is also a terminal component of $D+(u,v)$. In the other direction, suppose that $X$ is a terminal component of $D+(u,v)$. If $(u,v)\notin A(D[X])$, then clearly $X$ is also a terminal component of $D$. So now suppose that $(u,v) \in A(D[X])$. We can now apply item \ref{proofit:delete edge} of this lemma to find that there exists a terminal component of $D+(u,v)-(u,v)=D$ contained in $X$ that contains $u$. This is a contradiction, since we assumed that $u$ is not part of any terminal component of $D$. Hence our assumption was false, we indeed have $(u,v)\notin A(D[X])$, and hence $X$ is also a terminal component of $D$.
    \item Suppose that the number of terminal components of $D+(u,v)$ is not smaller than that of $D$. Together with the fact that every strongly connected (terminal) component of $D+(u,v)$, which does not contain $u$, is also a strongly connected (terminal) component of $D$, we get that the collection of terminal components of $D+(u,v)$ is obtained from the collection of terminal components of $D$ by replacing $\{u\}$ with some $X\supseteq\{u\}$. By definition of a terminal component, we find that $X$ contains $v$ as well.  
    \qedhere
    
    
    \end{enumerate}
\end{proof}

\subsection{Properties of and operations on allocations}

A natural way of changing a given $m$-allocation $\theta$ of a graph to another is to ``shift'' weights along a cycle. As this operation will be used in several places of our proof, we isolate the elementary facts about it in the following statement.

\begin{observation}\label{obs:shift}
Let $G$ be a graph, let $m>0$ be a real number, and let $\theta:V(G)^2\rightarrow \mathbb{R}_+$ be an $m$-allocation on $G$. Let $C$ be a cycle in $G$, and let $u_0,u_1,u_2,\ldots,u_k=u_0$ for some $k \ge 3$ be the cyclic sequence of vertices along $C$. Let $$\varepsilon\coloneqq\min\{\theta(u_{i-1},u_{i}):1 \leq i \leq k\}.$$ Suppose that $\varepsilon>0$, and define a new mapping $\theta':V(G)^2\rightarrow [0,1]$ by setting
    $$\theta'(x,y)\coloneqq \begin{cases}
    \theta(x,y)-\varepsilon, &  \text{ if }\exists i\in \{1,\ldots,k\}\text{ such that }(x,y)=(u_{i-1},u_i), \\
    \theta(x,y)+\varepsilon, &  \text{ if }\exists i\in \{1,\ldots,k\}\text{ such that }(x,y)=(u_{i},u_{i-1}), \\
    \theta(x,y), & \text{ otherwise.}
    \end{cases}$$ for every $x,y \in V(G)$. Then $\theta'$ is still an $m$-allocation of $G$. Furthermore, $D_{\theta'}$ is obtained from $D_\theta$ by removing the arcs $$\{(u_{i-1},u_i):1\le i \le k, \theta(u_{i-1},u_i)=1\}$$ and adding the arcs $$\{(u_i,u_{i-1}):1\le i \le k, \theta(u_{i-1},u_i)=\varepsilon\}.$$ 
\end{observation}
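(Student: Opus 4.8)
The statement to prove is \cref{obs:shift}, which asserts that shifting weights along a cycle preserves the $m$-allocation property and describes the effect on the digraph $D_\theta$. Here is my plan.

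\medskip

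\textbf{Verifying $\theta'$ is an $m$-allocation.} The plan is to check the three defining conditions in turn. Condition (1) (that $\theta'$ vanishes on non-edges) is immediate, since $\theta'$ only differs from $\theta$ on ordered pairs $(u_{i-1},u_i)$ and $(u_i,u_{i-1})$, all of which are edges of the cycle $C \subseteq G$. For condition (2), fix an edge $xy \in E(G)$: if $\{x,y\}$ is not an edge of $C$, then $\theta'(x,y)=\theta(x,y)$ and $\theta'(y,x)=\theta(y,x)$, so the sum is unchanged; if $\{x,y\}=\{u_{i-1},u_i\}$ for some $i$, then one of the two ordered pairs gets $-\varepsilon$ and the other gets $+\varepsilon$, so $\theta'(x,y)+\theta'(y,x)=\theta(x,y)+\theta(y,x)=1$. (One should note here that a vertex can appear several times among $u_0,\dots,u_k$ only if... actually $C$ is a cycle so the $u_i$ are distinct for $0\le i\le k-1$; but even if an edge of $C$ were traversed in both directions this cannot happen since $C$ is a cycle, so each ordered pair $(u_{i-1},u_i)$ is modified exactly once.) For condition (3), fix a vertex $x$. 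If $x \notin V(C)$, then $\sum_v \theta'(x,v) = \sum_v \theta(x,v) \le m$ is unchanged. If $x = u_i \in V(C)$, then among the out-pairs from $x$, exactly one is of the form $(u_{i-1},u_i)$-type reversed — namely the pair $(u_i, u_{i+1})$ is decreased by $\varepsilon$ (it is a forward edge of $C$) and the pair $(u_i, u_{i-1})$ is increased by $\varepsilon$ (it is a backward edge of $C$); all other out-pairs from $x$ are unchanged. Hence $\sum_v \theta'(x,v) = \sum_v \theta(x,v) \le m$. Also each entry stays in $[0,1]$: the decreased entries are $\theta(u_{i-1},u_i) - \varepsilon \ge 0$ by the choice of $\varepsilon$, and $\le 1$ trivially; the increased entries are $\theta(u_i,u_{i-1}) + \varepsilon = 1 - \theta(u_{i-1},u_i) + \varepsilon \le 1$, again by $\varepsilon \le \theta(u_{i-1},u_i)$, and $\ge 0$ trivially. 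So $\theta'$ is an $m$-allocation.

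\medskip

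\textbf{Describing $D_{\theta'}$.} Recall $A_{\theta} = \{(x,y): \theta(x,y)=1\}$. Since $\theta$ and $\theta'$ agree on all ordered pairs except those of the form $(u_{i-1},u_i)$ or $(u_i,u_{i-1})$, the arc sets $A_\theta$ and $A_{\theta'}$ can differ only on these. For a forward pair $(u_{i-1},u_i)$: we have $\theta'(u_{i-1},u_i)=\theta(u_{i-1},u_i)-\varepsilon$, which equals $1$ iff $\theta(u_{i-1},u_i) = 1+\varepsilon > 1$, impossible; so no forward pair is an arc of $D_{\theta'}$, and a forward pair was an arc of $D_\theta$ precisely when $\theta(u_{i-1},u_i)=1$. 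Thus the forward arcs removed are exactly $\{(u_{i-1},u_i): \theta(u_{i-1},u_i)=1\}$. For a backward pair $(u_i,u_{i-1})$: we have $\theta'(u_i,u_{i-1}) = \theta(u_i,u_{i-1}) + \varepsilon = 1 - \theta(u_{i-1},u_i) + \varepsilon$, which equals $1$ iff $\theta(u_{i-1},u_i) = \varepsilon$; and the backward pair was an arc of $D_\theta$ iff $\theta(u_i,u_{i-1})=1$, i.e.\ $\theta(u_{i-1},u_i)=0$, which is excluded since $\varepsilon>0$. So the backward arcs added are exactly $\{(u_i,u_{i-1}): \theta(u_{i-1},u_i)=\varepsilon\}$, and none were present before. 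This is precisely the claimed description.

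\medskip

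\textbf{Main obstacle.} This statement is essentially a routine verification, so there is no serious conceptual obstacle; the only points requiring a little care are (i) making sure that when summing out-weights at a vertex $u_i$ of the cycle, exactly one outgoing pair is decremented and exactly one is incremented (so the out-sum is genuinely unchanged rather than changed by $\pm 2\varepsilon$), which relies on $C$ being a genuine cycle with distinct vertices $u_0,\dots,u_{k-1}$; and (ii) keeping straight, in the digraph description, the two separate boundary cases $\theta(u_{i-1},u_i)=1$ (forward arc disappears) and $\theta(u_{i-1},u_i)=\varepsilon$ (backward arc appears), together with the observation that $\varepsilon > 0$ rules out any pre-existing backward arc. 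I would present (i) as the one line deserving emphasis and treat the rest as immediate case checks.
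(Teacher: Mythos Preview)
Your verification is correct and complete. The paper itself does not give a proof of this observation at all: it simply states it as an elementary fact about shifting along a cycle, so your write-up supplies exactly the routine checks the authors left implicit, and there is nothing to compare.
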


Given a graph $G$, a cycle $C$ equipped with a cyclic orientation and an $m$-allocation $\theta$ on $G$, the above forms a well-defined way to generate a new $m$-allocation $\theta'$ on $G$, which we from now on will refer to as the \emph{$m$-allocation obtained from $\theta$ by shifting along $C$}.

In our proofs, we will work with special allocations which are chosen to have certain extra properties. The following definition captures the way we choose these special allocations
\begin{definition}
Given a real number $m$ and a graph $G$ for which there exists an $m$-allocation, we say that an $m$-allocation is \emph{optimal} if it is chosen such that:
\begin{enumerate}
    \item The number of terminal components of $D_\theta$ is minimum among all possible choices of $m$-allocations $\theta$, and
    \item subject to point (1), the number of fractional edges is minimized (equivalently, the number of integral edges is maximized), and
    \item subject to points (1) and (2), the total number of vertices contained in terminal components is maximized, and
    \item subject to points (1), (2) and (3), the number of arcs in $D_\theta$ that end in a vertex forming a singleton terminal component of $D_\theta$ is maximized.
\end{enumerate}
\end{definition}
The following lemmas establish some properties that follow from this choice of $\theta$. Recall that $G_\theta^{\mathrm{frac}}$ is the spanning subgraph of $G$ comprising all the fractional edges.
\medskip
\begin{lemma}\label{lem: forest} If $\theta$ is an optimal $m$-allocation, then $G_\theta^{\mathrm{frac}}$ is a forest. \end{lemma}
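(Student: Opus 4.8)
The plan is to argue by contradiction: suppose $\theta$ is an optimal $m$-allocation but $G_\theta^{\mathrm{frac}}$ contains a cycle $C$. I would like to use the shifting operation from \cref{obs:shift} to replace $\theta$ by a new $m$-allocation $\theta'$ that contradicts one of the four optimality conditions, and the natural target is condition (2), the minimality of the number of fractional edges. The key observation is that if all edges of $C$ are fractional, then $\theta(u_{i-1},u_i)\in(0,1)$ for every $i$, so $\varepsilon = \min_i \theta(u_{i-1},u_i) \in (0,1)$ and $\varepsilon > 0$, so shifting along $C$ is a legitimate move. After shifting, the edge $u_{j-1}u_j$ attaining the minimum now has $\theta'(u_{j-1},u_j) = 0$, hence $\theta'(u_j,u_{j-1}) = 1$, so this edge has become integral; meanwhile no previously integral edge can become fractional, since the shift only changes $\theta$-values on edges of $C$, all of which were fractional. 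Hence $E_{\mathrm{frac}}(G,\theta') \subsetneq E_{\mathrm{frac}}(G,\theta)$.

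The subtlety is that condition (2) is subordinate to condition (1): I must first check that shifting along $C$ does not \emph{increase} the number of terminal components of the associated digraph. This is exactly where the second part of \cref{obs:shift} is used: $D_{\theta'}$ is obtained from $D_\theta$ by removing the arcs $(u_{i-1},u_i)$ with $\theta(u_{i-1},u_i)=1$ — but there are none, since every edge of $C$ is fractional — and adding the arcs $(u_i,u_{i-1})$ with $\theta(u_{i-1},u_i)=\varepsilon$. So $A(D_\theta) \subseteq A(D_{\theta'})$, and \cref{it:subgraph} immediately gives that the number of terminal components of $D_{\theta'}$ is at most that of $D_\theta$. Combined with optimality condition (1) for $\theta$, it is exactly equal. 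Therefore $\theta'$ is still optimal with respect to condition (1), but strictly better with respect to condition (2), contradicting the optimality of $\theta$.

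The only point requiring a little care — and the main (minor) obstacle — is confirming that the newly added arcs $(u_i,u_{i-1})$ really are new, i.e.\ that we genuinely added at least one arc and did not simply relabel; but since $\varepsilon>0$ there is at least one index $j$ with $\theta(u_{j-1},u_j)=\varepsilon$, and the corresponding arc $(u_j,u_{j-1})$ was not present in $D_\theta$ because $u_{j-1}u_j$ was fractional in $\theta$ (so neither orientation of it was an arc of $D_\theta$). In fact for the argument we do not even need the arc set to strictly grow — the inclusion $A(D_\theta)\subseteq A(D_{\theta'})$ suffices for \cref{it:subgraph} — so this is a non-issue. Putting it together: the existence of a cycle in $G_\theta^{\mathrm{frac}}$ produces an $m$-allocation $\theta'$ with the same number of terminal components but strictly fewer fractional edges, contradicting optimality. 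Hence $G_\theta^{\mathrm{frac}}$ is acyclic, i.e.\ a forest.
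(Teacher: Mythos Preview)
Your proposal is correct and follows essentially the same route as the paper's proof: assume a fractional cycle, shift along it via \cref{obs:shift}, note that since all cycle edges were fractional no arcs are removed from $D_\theta$ while at least one is added, invoke \cref{it:subgraph} to keep condition~(1) satisfied, and obtain strictly fewer fractional edges to violate condition~(2). The only cosmetic difference is that the paper phrases the strict decrease in fractional edges via the strict containment $A(D_{\theta'})\supsetneq A(D_\theta)$ rather than tracking the minimizing edge directly, but the content is identical.
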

\begin{proof}
    Suppose for a contradiction that there exists a cycle $C$ in $G_\theta^{\mathrm{frac}}$, and let $u_0,u_1,\ldots,u_k=u_0$ be its cyclic sequence of vertices. Let $\theta'$ be the $m$-allocation obtained from $\theta$ by shifting along $C$. By \cref{obs:shift}, $\theta'$ is still an $m$-allocation on $G$. Let $i_0\in \{1,\ldots,k\}$ be an index that minimizes $\theta(u_{i_0-1},u_{i_0})$. By \cref{obs:shift} and since all edges along $C$ are fractional in $\theta$, we have $A(D_{\theta'}) \supseteq A(D_{\theta})\cup \{(u_{i_0},u_{i_0-1})\}$. This implies by \cref{it:subgraph} that the number of terminal components in $D_{\theta'}$ is at most as large as the number of terminal components in $D_\theta$. Together with the fact that $|E_{\mathrm{frac}}(G,\theta')|<|E_{\mathrm{frac}}(G,\theta)|$ (since $A(D_{\theta'}) \supsetneq A(D_{\theta})$) this contradicts the optimality of $\theta$. Hence our assumption was wrong, and the graph $G_\theta^\mathrm{frac}$ is indeed a forest.
\end{proof}
\begin{lemma}
\label{lemma: no triangle between terminal components}
Let $\theta$ be an optimal $m$-allocation, let $v,w$ be vertices of a terminal component $C$ of $D_\theta$ such that $(v,w) \in A(D_\theta)$, and let $P=(w=u_0,\dots, u_\ell=v)$ be a path in $G_\theta^{\mathrm{frac}}$ such that all the inner vertices are in some terminal component of $D_\theta$ different from $C$. Then there exists $0\leq i\leq\ell-1$ such that $u_{i}$ and $u_{i+1}$ lie in the same terminal component of $D_\theta$.

In particular, if $U$ and $W$ are distinct terminal components of $D_\theta$, then for every $u \in U$ the set $N_G(u)\cap W$ is independent in $G$. 
\end{lemma}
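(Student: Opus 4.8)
The plan is to argue by contradiction, exploiting the optimality of $\theta$—specifically, the minimality of the number of terminal components together with \cref{lem: forest} and \cref{it:subgraph,it:delete arc}. Suppose that the path $P = (w = u_0, \dots, u_\ell = v)$ in $G_\theta^{\mathrm{frac}}$ has all inner vertices lying in terminal components different from $C$, and suppose for contradiction that no two consecutive vertices $u_i, u_{i+1}$ of $P$ lie in the same terminal component. In particular $\ell \geq 1$, and every edge $u_iu_{i+1}$ of $P$ goes between two distinct terminal components (here using that $w,v \in C$, which is itself a terminal component distinct from those containing the inner vertices, except possibly when an inner vertex coincides with $C$—but by hypothesis the inner vertices lie in terminal components \emph{different} from $C$). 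Concatenating the arc $(v,w) \in A(D_\theta)$ with the edges of $P$ yields a closed walk, and I want to show that shifting weight along an appropriate cycle contained in this structure produces an $m$-allocation $\theta'$ with $A(D_{\theta'}) \supseteq A(D_\theta) \cup \{(u_{i+1}, u_i)\}$ for some $i$, i.e.\ a new arc pointing "backwards" along $P$ between two distinct terminal components of $D_\theta$; then \cref{it:subgraph} forces strictly fewer terminal components, contradicting optimality.

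To make this precise, I would consider the cycle $C^*$ in $G$ consisting of the arc $(v,w)$ together with the path $P$ traversed from $w$ to $v$. Since all edges of $P$ are fractional and $(v,w)$ is integral with $\theta(v,w) = 1$, the minimum of $\theta$ along the directed cycle $(v \to w \to u_1 \to \dots \to u_\ell = v)$ is positive, so \cref{obs:shift} applies: shifting along $C^*$ yields an $m$-allocation $\theta'$, and by the "furthermore" clause of \cref{obs:shift}, every edge of $P$ that was previously fractional either stays fractional or becomes an arc of $D_{\theta'}$ oriented from $u_i$ to $u_{i-1}$, while the arc $(v,w)$ is removed. Now I split into cases. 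If the arc $(v,w)$ is \emph{not} removed—impossible, since it realizes the minimum only if $\theta(v,w) = \varepsilon$, but $\theta(v,w)=1$ and some fractional edge has weight $<1$, so actually $(v,w)$ is always removed from $D_{\theta'}$. Hence instead I compare $D_{\theta'}$ with $D_\theta$ directly: $A(D_{\theta'})$ contains $A(D_\theta) \setminus \{(v,w)\}$ plus at least one new backward arc $(u_{i}, u_{i-1})$ along $P$. The key point is that $C$ remains a terminal component (or merges), and a backward arc between distinct terminal components creates a directed path between two old terminal components, so \cref{it:subgraph} gives strictly fewer terminal components in $D_{\theta'}$, contradicting the optimality of $\theta$ at level (1).

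The delicate point—and the step I expect to be the main obstacle—is handling the removal of the arc $(v,w)$: deleting an arc can only split a terminal component, never destroy one (\cref{it:delete arc}), so losing $(v,w)$ does not by itself increase the terminal-component count, but I must verify that after \emph{both} the deletion of $(v,w)$ and the addition of the new backward arcs, the net effect is a strict decrease. The cleanest route is: first observe that since $(v,w) \in A(D_\theta[C])$, by \cref{it:delete arc} the digraph $D_\theta - (v,w)$ has the same terminal components as $D_\theta$ except $C$ is replaced by a subset $C' \ni v$; then $D_{\theta'} \supseteq (D_\theta - (v,w)) \cup \{\text{new backward arcs along } P\}$, and the new backward arcs, together with the forward path $P$ which still exists in $G$ (though now possibly as arcs in $D_{\theta'}$ or still fractional), connect the terminal component containing $v$ to the one containing $w$ via a directed path in $D_{\theta'}$—wait, one must be careful whether $w \in C'$. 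If $w \notin C'$ then $C'$ and the terminal component of $w$ are distinct, a backward arc plus the structure of $P$ yields the required directed path, and \cref{it:subgraph} applies to $D_\theta - (v,w)$ versus $D_{\theta'}$; combined with the fact that $D_\theta - (v,w)$ has exactly the same number of terminal components as $D_\theta$, we are done. If $w \in C'$, then $v$ and $w$ are already in a common terminal component of $D_\theta - (v,w)$, and then following $P$ backward from $v$ to $w$ one finds a first index where the path exits this component, producing a directed path in $D_{\theta'}$ between two distinct terminal components. Either way \cref{it:subgraph} yields the contradiction. The final sentence of the statement then follows immediately: if $u \in U$ had two neighbors $w_1, w_2 \in W$ with $w_1w_2 \in E(G)$, then $\{u, w_1, w_2\}$ would span a triangle, and orienting appropriately one gets an arc and a fractional path contradicting what we just proved—more carefully, one applies the first part with the roles set up so that the triangle forces two consecutive vertices of a short $G_\theta^{\mathrm{frac}}$-path to lie in the same terminal component, which is impossible since $U \neq W$.
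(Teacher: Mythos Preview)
Your overall strategy matches the paper's: shift along the cycle formed by $(v,w)$ and $P$, observe via \cref{obs:shift} that $(v,w)$ is removed and some backward arc $(u_j,u_{j-1})$ is added, apply \cref{it:delete arc} to $D' \coloneqq D_\theta-(v,w)$, and then invoke \cref{it:subgraph} to contradict optimality. However, your case analysis has a genuine gap.

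You split on whether $w\in C'$ (the new terminal component replacing $C$ in $D'$). In the case $w\notin C'$ you speak of ``the terminal component of $w$'', but by \cref{it:delete arc} the terminal components of $D'$ are exactly those of $D_\theta$ with $C$ replaced by $C'\ni v$; so if $w\notin C'$ then $w$ lies in \emph{no} terminal component of $D'$, and your sentence does not parse. In the case $w\in C'$ you write ``following $P$ backward from $v$ to $w$ one finds \ldots\ a directed path in $D_{\theta'}$'', but the edges of $P$ are fractional and only one of them is guaranteed to become an arc; there is no reason the remaining edges give you a directed walk. In both branches the missing ingredient is the same: since $D_\theta[C]$ is strongly connected and $(v,w)$ cannot lie on any directed $w$-to-$v$ path, there is a directed path $Q$ from $w$ to $v$ inside $D'$. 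With $Q$ in hand the correct split is on the index $j$ of the new arc: for $2\le j\le \ell-1$ both $u_j,u_{j-1}$ lie in terminal components of $D_\theta$ distinct from $C$, hence in distinct terminal components of $D'$; for $j=\ell$ one has $u_\ell=v\in C'$ while $u_{\ell-1}$ lies in a terminal component of $D'$ different from $C'$; and for $j=1$ the concatenation $(u_1,w)+Q$ is a directed path in $D'+(u_1,w)$ from the terminal component of $u_1$ to $C'\ni v$. In every case \cref{it:subgraph} gives the strict drop.

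For the ``in particular'' statement your sketch is essentially correct but you should make explicit the two preliminary observations the paper uses: every edge from $u\in U$ to $W$ is fractional (both $U$ and $W$ are terminal, so no arcs cross between them), hence by \cref{lem: forest} no edge inside $N_G(u)\cap W$ is fractional; and if $(v,w)\in A(D_\theta)$ is an integral edge inside $N_G(u)\cap W$ then $P=(w,u,v)$ contradicts the first part, since $u$ lies in the terminal component $U\neq W$.
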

\begin{proof}
Suppose for contradiction that for all $0\leq i\leq \ell-1$, it holds that $u_i$ and $u_{i+1}$ are in different terminal components of $D_\theta$. Let $\theta'$ be the $m$-allocation on $G$ that is obtained from $\theta$ by shifting along the cycle obtained by adding $(v,w)$ to $P$. By \cref{obs:shift} and since by assumption $\theta(v,w)=1$, $A(D_{\theta'})$ is obtained from $A(D_\theta)$ by removing the arc $(v,w)$ and adding at least one of the arcs $(u_i,u_{i-1})$ for some $1\leq i\leq \ell$. Let $1\leq j\leq \ell$ be such that $(u_j,u_{j-1})$ is added. Let $D' = D_\theta-(v,w)$. By \cref{it:delete arc}, it follows that the terminal components of $D'$ are the same as the terminal components of $D_\theta$, except that $C$ is replaced by some $X \subseteq C$ with $v \in X$. Observe that there exists a path $Q$ from $w$ to $v$ in $D'$ since $D_\theta[C]$ is strongly connected and $(v,w)$ is not part of any path from $w$ to $v$. 

Recall that by assumption, if $2\le j\leq \ell -1 $ then $u_j$ and $u_{j-1}$ are in different terminal components of $D_\theta$, and these terminal components are distinct from $C$. Hence they also lie in distinct terminal components of $D'$. If $j=\ell$, then $u_j=u_{\ell}=v$ is contained in the terminal component $X\subseteq C$ of $D'$, and $u_{j-1}=u_{\ell-1}$ is in a terminal component of $D_\theta$ distinct from $C$. Hence, $u_j,u_{j-1}$ are also in different terminal components in this case. Finally, if $j=1$, then adding the arc $(u_1,u_{0}=w)$ introduces a path (namely $(u_1,w)+Q$) from $u_1$ to $v$ in $D'$, and $u_1$ and $v$ are also in different terminal components of $D'$. In each case, we conclude by \cref{it:subgraph} that $D'+(u_j,u_{j-1})$ and thus also $D_{\theta'}\supseteq D'+(u_j,u_{j-1})$ has strictly fewer terminal components than $D_\theta$, contradicting the optimality of $\theta$.

For the second statement, note first that all edges between $u$ and $W$ must be fractional, as both $U$ and $W$ are terminal components of $D_\theta$. Therefore, $N_G(u) \cap W$ cannot contain any fractional edges, since this would produce a triangle in $G_\theta^{\mathrm{frac}}$, contradicting \cref{lem: forest}. On the other hand, if $N_G(u) \cap W$ contains an integral edge $(v,w) \in A(D_\theta)$, we obtain a contradiction to the first statement of this lemma by setting $P = (w, u, v)$. 
\end{proof}

\subsection{Spines}
In order to define the pseudoforest and forest guaranteed in \cref{thm:pseudoforest,thm:43}, respectively, we use the notion of a \emph{spine} of a digraph, which we now define.
\begin{definition}\label{def:spine}
Let $D$ be a digraph. A spanning subdigraph $H\subseteq D$ is called a \emph{spine} of $D$ if the following hold.
\begin{itemize}
    \item Every vertex in a non-terminal component of $D$ has out-degree exactly $1$ in $H$.
    \item In every terminal component of $D$, all vertices have out-degree $1$ in $H$, apart from at most one vertex of out-degree $0$.
    \item The underlying graph of $H$ is a forest.
\end{itemize}
\end{definition}

\begin{lemma}\label{lemma:forestexist} Let $D$ be a digraph and $R\subseteq V(D)$ a set of vertices containing exactly one vertex from each terminal component of $D$. Then $D$ has a spine $H$ such that every vertex in $V(D)\setminus R$ has out-degree exactly one in $H$. \end{lemma}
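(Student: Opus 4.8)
The plan is to build $H$ explicitly, choosing for each vertex outside $R$ exactly one out-arc, in such a way that following out-arcs always leads ``downward'' through the condensation of $D$ into a terminal component and then to its representative in $R$.

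I will use the standard fact that a strongly connected digraph $D'$ on a vertex set $X$ has, for every prescribed $r\in X$, a spanning \emph{in-arborescence} rooted at $r$: a spanning subdigraph in which every vertex of $X\setminus\{r\}$ has out-degree exactly $1$, the vertex $r$ has out-degree $0$, and the underlying graph is a spanning tree of $X$ (take a BFS- or DFS-tree rooted at $r$ in the reverse digraph of $D'$ and reverse all its arcs). Now construct $H$ as follows. For each terminal component $X$ of $D$, let $r_X$ be the unique vertex of $R\cap X$ and put into $H$ a spanning in-arborescence of $D[X]$ rooted at $r_X$. For each non-terminal strong component $C$ of $D$, use that $C$ is not terminal to pick an arc $e_C=(u_C,w_C)\in A(D)$ with $u_C\in C$ and $w_C\notin C$, and put into $H$ both a spanning in-arborescence of $D[C]$ rooted at $u_C$ and the arc $e_C$. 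Let $H$ be the union over all strong components of the arcs chosen in this way; clearly $H$ is a spanning subdigraph of $D$.

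The out-degree conditions are immediate: within each component all vertices have out-degree $1$ in $H$ except one ``root'' of out-degree $0$ (namely $r_X$ for a terminal component $X$, and $u_C$ for a non-terminal component $C$), and each $u_C$ recovers out-degree $1$ from the arc $e_C$. Hence in $H$ every vertex of $V(D)\setminus R$ has out-degree exactly $1$ and every vertex of $R$ has out-degree $0$. It remains to check that the underlying graph of $H$ is a forest, and then all three bullets of \cref{def:spine} hold. For this, start at any vertex and follow out-arcs of $H$: inside a component $C$ the walk moves strictly toward the root of the in-arborescence of $D[C]$, reaches that root after finitely many steps, and then — if $C$ is non-terminal — leaves $C$ along $e_C$ to a strictly lower strong component of the condensation DAG. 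Since a walk cannot return to a strong component it has left in a DAG, this process stops after finitely many steps at a vertex of $R$. In particular $H$ has no directed closed walk, so it contains neither a directed cycle nor a pair of opposite arcs $(u,v),(v,u)$; as every vertex of $H$ also has out-degree at most $1$, its underlying graph is a forest (an undirected cycle would force each of its vertices to use its unique out-arc along the cycle, yielding an honest directed cycle).

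The substance of the argument is the construction, and the one point that needs care is rooting the in-arborescence of each non-terminal $D[C]$ at a vertex $u_C$ that actually has an arc leaving $C$: this is what simultaneously prevents directed cycles and gives every non-terminal vertex out-degree $1$. Everything else — the out-degree bookkeeping, the termination of out-walks, and the elementary ``out-degree $\le 1$ plus terminating out-walks implies forest'' observation — is routine.
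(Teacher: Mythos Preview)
Your proof is correct and follows essentially the same approach as the paper: pick a spanning in-arborescence in each strong component (rooted at the $R$-vertex for terminal components, and at a vertex with an outgoing cross-arc for non-terminal components), then add the cross-arcs. The only cosmetic difference is in the forest verification: the paper argues that a directed cycle in $H$ would lie in a single strong component of $D$, where $H$ is a tree by construction, while you argue via termination of out-walks through the condensation DAG; both are short and equivalent.
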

\begin{proof}
Let $X$ be any strong component of $D$. Then $D[X]$ is strongly connected, and hence for every vertex $x \in X$ it contains an in-tree with root vertex $x$ that contains all the vertices in $X$. 

To build $H$, do the following. For every non-terminal component $X$, pick an arc $(x,y) \in A(D)$ such that $x \in X, y \notin X$ and a spanning in-tree $T_X$ of $D[X]$ rooted at $x$. Then add all the arcs of $T_X$ as well as the arc $(x,y)$ to the arc-set of $H$. Next, for every terminal component $X$, pick the unique vertex $r\in R \cap X$ and an in-tree rooted at $r$ that spans $D[X]$ and add all the arcs of this tree to $H$. 

At the end of this process, $H$ defines a spanning subdigraph of $D$ in which every vertex has exactly one out-arc, except for one vertex in each terminal component (namely, the vertices in $R$). 
Finally, suppose towards a contradiction that the underlying graph $F$ of $H$ contains a cycle. Then since $H$ has maximum out-degree $1$, this cycle would have to be directed in $H$, and thus it would be fully contained in a strong component of $D$. However, it follows directly by definition of $H$ that $F$ restricted to any strong component of $D_\theta$ is a tree, yielding the desired contradiction. Thus, $F$ is indeed a forest, as desired. 
\end{proof}
\begin{lemma}\label{lemma: no_edge}
Let $\theta$ be an optimal $m$-allocation, and let $H$ be a digraph containing a spine of $D_\theta$. Let further $u,v,w$ be vertices such that $\{u\}$ is a singleton terminal component of $D_\theta$, $uv, uw \in E(G_\theta^{\mathrm{frac}})$, and $(v,w) \in A(D_\theta)\setminus A(H)$. Then $\{w\}$ is also a singleton terminal component of $D_\theta$.
\end{lemma}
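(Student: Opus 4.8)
The plan is to show that if $\{w\}$ were not a singleton terminal component of $D_\theta$, then we could modify $\theta$ into another $m$-allocation that contradicts its optimality. The modification is a single shift along a cycle (\cref{obs:shift}), applied to the triangle on $\{u,v,w\}$; note that $u,v,w$ are pairwise distinct and span a triangle in $G$, since $uv$ and $uw$ are edges of $G$ (being fractional) and $vw$ is an edge of $G$ with $\theta(v,w)=1$.

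Before shifting I would record two structural facts. First, $v$ lies in no terminal component of $D_\theta$: otherwise $v$ would lie in a terminal component $X\neq\{u\}$, so by the second part of \cref{lemma: no triangle between terminal components} the set $N_G(u)\cap X$ is independent in $G$; as $v\in N_G(u)\cap X$, this forces $w\notin X$, and then the arc $(v,w)$ would leave the terminal component $X$, a contradiction. Second, $D_\theta-(v,w)$ has exactly the same terminal components as $D_\theta$: since $v$ is not in a terminal component, $v$ has out-degree exactly $1$ in a spine of $D_\theta$ contained in $H$, with out-arc $(v,v')$ for some $v'\neq w$ (because $(v,w)\notin A(H)$); following out-arcs of that spine from $v$ produces a directed path in $D_\theta$ from $v$ to a terminal component which avoids the arc $(v,w)$, so \cref{it:outside terminal} applies.

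Now shift along the directed triangle $u\to v\to w\to u$; this is legitimate since $\theta(u,v),\theta(v,w),\theta(w,u)$ are all positive, and it yields an $m$-allocation $\theta'$ with $\varepsilon\coloneqq\min\{\theta(u,v),\theta(w,u)\}\in(0,1)$. By \cref{obs:shift}, $D_{\theta'}=D_\theta-(v,w)+A'$, where $A'$ contains $(v,u)$ iff $\theta(u,v)=\varepsilon$ and contains $(u,w)$ iff $\theta(w,u)=\varepsilon$. In each of the three cases I would compute the change in the number of fractional edges from the changes in status of $uv,vw,uw$, and compute the terminal components of $D_{\theta'}$ by starting from $D_\theta-(v,w)$ (which has the same terminal components as $D_\theta$, by the second fact) and adding the arcs of $A'$ one at a time, using \cref{item:adduselessedge} for $(v,u)$ (legitimate since $v$ is in no terminal component) and \cref{it:singleton} for $(u,w)$ (legitimate since $\{u\}$ remains a singleton terminal component throughout). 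The outcomes: if $\theta(u,v)>\theta(w,u)$ then only $(u,w)$ is added, the number of fractional edges is unchanged, and \cref{it:singleton} gives either strictly fewer terminal components (contradicting condition~(1) of optimality) or the same number with $\{u\}$ replaced by a strictly larger terminal component (contradicting condition~(3)); if $\theta(u,v)=\theta(w,u)$ then both $(v,u)$ and $(u,w)$ are added, the number of fractional edges strictly decreases, and \cref{it:singleton} gives either strictly fewer terminal components (contradicting condition~(1)) or the same number (now contradicting condition~(2)); and if $\theta(u,v)<\theta(w,u)$ then only $(v,u)$ is added, so $D_{\theta'}$ has exactly the same terminal components and the same number of fractional edges as $D_\theta$ (hence agrees with it on conditions~(1)--(3)), while $D_{\theta'}=D_\theta-(v,w)+(v,u)$ with $(v,u)\notin A(D_\theta)$ and $u$ forming a singleton terminal component, so the number of arcs ending in a vertex forming a singleton terminal component strictly increases, unless the removed arc $(v,w)$ also ended in such a vertex — i.e.\ unless $\{w\}$ is a singleton terminal component. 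Thus the first two cases cannot occur, and in the third the only way not to contradict condition~(4) is for $\{w\}$ to be a singleton terminal component, which is exactly the claim.

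I expect the bookkeeping of this case analysis to be the main obstacle: one must keep careful track of which optimality condition is violated in each case (this hinges precisely on whether the shift changes the number of fractional edges), and one must check that the intermediate digraphs genuinely satisfy the hypotheses of \cref{item:adduselessedge} and \cref{it:singleton}. The only other point requiring care is the second structural fact above, which relies on the (easy, and automatic for spines built as in \cref{lemma:forestexist}) fact that following out-arcs in a spine always reaches a terminal component.
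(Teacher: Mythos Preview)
Your proof is correct and follows essentially the same route as the paper: shift along the triangle $u,v,w$, use that $v$ lies in no terminal component (you invoke the second part of \cref{lemma: no triangle between terminal components}, the paper invokes the first part---these are equivalent here) and \cref{it:outside terminal} to see that $D_\theta-(v,w)$ has the same terminal components, and then run the same three-way case analysis on which of $(v,u),(u,w)$ enter $D_{\theta'}$ to contradict one of the optimality conditions (1)--(4) unless $\{w\}$ is a singleton terminal component. The only cosmetic difference is that the paper phrases the whole argument as a proof by contradiction, while you phrase it as a direct case analysis ruling out two cases; the invoked lemmas and the bookkeeping are identical.
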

\begin{proof}
Towards a contradiction, suppose that $\{w\}$ is not a singleton terminal component of $D_\theta$.

Let $\theta'$ be the $m$-allocation on $G$ that is obtained from $\theta$ by shifting along the triangle $u,v,w$ (in this cyclic order). By \cref{obs:shift}, $A(D_{\theta'})$ is obtained from $A(D_\theta)$ by removing the arc $(v,w)$ and adding one (or both) of the arcs $(v,u),(u,w)$. 

If $v$ is contained in any terminal component of $D_\theta$, then the same terminal component of $D_\theta$ also must contain $w$, and hence we are in the setting of \cref{lemma: no triangle between terminal components}, which yields a contradiction to this case (note that since $u$ is contained in a singleton terminal component, it is necessarily contained in a distinct terminal component from $v,w$). 

So suppose in the following that $v$ is not contained in any terminal component of $D_\theta$. Let $P$ be the directed path in the spine of $D_\theta$ contained in $H$ that one obtains by starting at $v$ and repeatedly following the unique out-arc in the spine until one reaches a terminal component of $D_\theta$. Since we assumed $(v,w)\notin A(H)$, the path $P$ does not use $(v,w)$, and so we can apply \cref{it:outside terminal} to find that the digraphs $D_\theta-(v,w)$ and $D_\theta$ have the same terminal components. Since $D_{\theta'} \supseteq D_\theta-(v,w)$, by \cref{it:subgraph}, we find that $D_{\theta'}$ has at most as many terminal components as $D_\theta$. Also, since it is obtained from $D_\theta-(v,w)$ by adding at least one of $(v,u), (u,w)$, it has at least as many arcs as $D_\theta$. If both of the arcs are added, then $\theta'$ induces strictly more integral edges than $\theta$, which contradicts the optimality of $\theta$. Thus, in the following suppose that exactly one of the arcs $(u,w),(v,u)$ is added.

For a first case, suppose that $(u,w) \in A(D_{\theta'})$. It then follows from \cref{it:singleton} that one of the following holds.

\begin{itemize}
    \item $D_{\theta'}$ has strictly fewer terminal components than $D_\theta-(v,w)$ (and thus than $D_\theta$).
    \item In $D_{\theta'}$ we have exactly as many terminal components as in $D_\theta-(v,w)$, but strictly more vertices that are contained in terminal components than in $D_\theta-(v,w)$ (and thus than in $D_\theta$). 
\end{itemize}
In both cases, this yields a contradiction to the optimality of $\theta$. 

For the second case, suppose that $(v,u) \in A(D_{\theta'})$ and $(u,w) \notin A(D_{\theta'})$. By what was said above and \cref{it:subgraph}, we have that $D_{\theta'}$ has at most as many terminal components as $D_\theta$, and at least as many integral edges as $D_\theta$. Furthermore, since we assumed that $v$ is not part of any terminal component of $D_\theta$, it is also not part of any terminal component of $D_\theta-(v,w)$, since we found above that these two digraphs have the same terminal components. Hence, \cref{item:adduselessedge} implies that also the digraph $D_{\theta'}=(D_\theta-(v,w))+(v,u)$ has the same terminal components as $D_\theta$, and hence the number of vertices that are in terminal components is the same in $D_\theta$ and $D_{\theta'}$. However, we claim that the number of edges of $D_{\theta'}$ ending in singleton terminal components is strictly larger than in $D_\theta$, which will yield the desired contradiction to the optimality of $\theta$. To see this, note that by what was said above the set of singleton terminal components of $D_\theta$ and $D_\theta'$ is identical. However, adding the edge $(v,u)$ increases the in-degree of the singleton terminal component $u$, and no in-degree of any other singleton terminal component is reduced, since we assumed that $w$ is not a singleton terminal component. This contradiction concludes the proof. 
\end{proof}
\section{Proof of Theorem \ref{thm:pseudoforest}}\label{sec:pseudoforest}
Given these preliminaries, we are now ready to prove \cref{thm:pseudoforest}.
\begin{proof}[Proof of \cref{thm:pseudoforest}]
Let $m = m(G)$ and, by \cref{thm:allocations}, let $\theta$ be an optimal $m$-allocation of $G$. Let $H$ be a spine of $D_\theta$, which exists by \cref{lemma:forestexist}. Let $H'$ be a digraph obtained by adding an arbitrary out-arc to every vertex of $H$ which does not yet have an out-arc in $H$ but does have at least one out-arc in $D_\theta$. Finally, let $F$ denote the underlying graph of $H'$, and note that $F$ is a pseudoforest (since it admits an orientation of maximum out-degree $1$). Note that the only vertices of $H'$ which do not have an out-arc are the vertices that form singleton terminal components of $D_\theta$. 

We now claim that $m_2(G-F)\le m$. 
Towards a contradiction, suppose that $m_2(G-F)>m$. Then there exists a set $U\subseteq V(G)$ such that $|U|\ge 3$ and
\begin{align}\label{ineq: basis}
    m(|U|-2)+1<\e_{G-F}(U).
\end{align}
We make the following observation.
\begin{claim}\label{clm: U is big}
    $|U|>2m-1.$
\end{claim}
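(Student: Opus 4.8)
The plan is to prove this using only the most trivial upper bound on $\e_{G-F}(U)$ --- the number of all possible edges on $|U|$ vertices --- without invoking any structural information about $F$ or about the allocation $\theta$. First I would observe that $G-F$ is a subgraph of $G$, so
\[
\e_{G-F}(U)\le \e(G[U])\le \binom{|U|}{2}=\frac{|U|(|U|-1)}{2}.
\]
Substituting this into \eqref{ineq: basis} yields $m(|U|-2)+1<\tfrac{|U|(|U|-1)}{2}$, which I would rearrange to
\[
m(|U|-2)<\frac{|U|^2-|U|-2}{2}=\frac{(|U|-2)(|U|+1)}{2}.
\]
Since $|U|\ge 3$ we have $|U|-2\ge 1>0$, so dividing both sides by $|U|-2$ gives $m<\tfrac{|U|+1}{2}$, i.e.\ $|U|>2m-1$, which is the desired bound.

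There is essentially no obstacle in this argument; the only points needing minor care are that $|U|\ge 3$ is precisely what makes $|U|-2$ strictly positive (so that dividing by it preserves the direction of the inequality), and that the factorization $|U|^2-|U|-2=(|U|-2)(|U|+1)$ is what makes the final cancellation clean. It is worth noting that the ostensibly natural alternative of bounding $\e_{G-F}(U)\le \e(G[U])\le m|U|$ via $m(G)=m$ would only produce the vacuous conclusion $1<2m$; so it is genuinely the quadratic bound $\binom{|U|}{2}$, valid in the critical range $|U|\approx 2m$, that does the work here.
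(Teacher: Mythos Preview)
Your proof is correct and is essentially the same as the paper's: both combine \eqref{ineq: basis} with the trivial bound $\e_{G-F}(U)\le\binom{|U|}{2}$ and use the factorization $|U|^2-|U|-2=(|U|-2)(|U|+1)$. The only cosmetic difference is that the paper argues by contradiction (assuming $|U|\le 2m-1$ so that $m\ge\tfrac{|U|+1}{2}$) while you argue directly by dividing through by $|U|-2>0$.
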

\begin{proof}
    Suppose $|U|\leq 2m-1$. Together with \eqref{ineq: basis}, we get
    $$
        \binom{|U|}{2}\geq \e_{G-F}(U)>m(|U|-2)+1\geq\frac{|U|+1}2 (|U|-2)+1.
    $$
    But $|U|(|U|-1)=(|U|+1)(|U|-2)+2$, a contradiction.
\end{proof}
Let $d$ denote the number of terminal components of $D_\theta$ that intersect $U$ in exactly one vertex.

\begin{claim}\label{clm: d is 1}
    $d\leq 1$.
\end{claim}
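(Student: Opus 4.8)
The plan is to argue by contradiction: assume $d\ge 2$ and produce, by a local surgery on $\theta$, a new $m$-allocation that beats $\theta$ in the lexicographic order $(1)$--$(4)$ of the definition of optimality. I would first reduce to the case that $U$ is a violating set of minimum size; comparing $U$ with $U$ minus a vertex then shows that every vertex of $U$ has more than $m$ neighbours in $(G-F)[U]$, and since a violating set exists only when $G$ is not a pseudoforest we have $m=m(G)>1$, so every vertex of $U$ has at least two neighbours in $(G-F)[U]$.

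Fix distinct terminal components $W_1\ne W_2$ of $D_\theta$ with $W_i\cap U=\{a_i\}$, and record the local structure at $a_1$ and $a_2$. Since $W_i$ is terminal and meets $U$ only in $a_i$, every edge of $(G-F)[U]$ incident with $a_i$ is either fractional or an arc of $D_\theta$ oriented into $a_i$, and the out-arc of $a_i$ in $H'$ (if any) ends inside $W_i$ and hence outside $U$. Combined with the fact that $a_i$ has at least two neighbours in $(G-F)[U]$, this produces either a fractional edge at $a_i$ or a short \emph{positive} path through $a_i$ (one along which all consecutive $\theta$-weights are positive), which is precisely the input required by \cref{obs:shift}.

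The heart of the argument is the surgery. In the main case $a_1a_2\in E(G)$; this edge must be fractional, since orienting it either way would leave a terminal component, and I would close it into a positive cycle $C$ through $a_1,a_2$ using the fractional edges or positive paths found above and shift $\theta$ along $C$ so as to create an arc out of $a_1$. Tracking terminal components through this shift with \cref{lem:components} — invoking \cref{it:singleton} when $W_1$ is a singleton, and \cref{it:subgraph}, \cref{it:delete arc}, \cref{item:adduselessedge} in general — one obtains that either the number of terminal components strictly decreases, or $W_1$ is absorbed into a strong component that also contains $a_2$, so that both $W_1$ and $W_2$ cease to be separate terminal components and the count again decreases; in all cases this contradicts the optimality of $\theta$. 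The remaining configurations — $W_i$ not a singleton, $a_1a_2\notin E(G)$ (route through a common $(G-F)[U]$-neighbour of $a_1$ and $a_2$, or through two private neighbours), and shifts that destroy integral arcs rather than create them — are handled by the same mechanism, but each has to be isolated as its own case and checked against one of the four optimality criteria.

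The step I expect to be the main obstacle is exactly this case analysis: one must ensure, in every configuration, that the fractional edges handed over by the minimum-degree bound at $a_1,a_2$ genuinely assemble into a cycle of length at least three admissible for \cref{obs:shift}, and then keep precise track of which arcs the shift creates and destroys so as to invoke the correct part of \cref{lem:components} — in particular ruling out that the surgery inadvertently creates a new terminal component. Getting every case to line up against one of the tie-breakers $(1)$--$(4)$ is where the real work lies.
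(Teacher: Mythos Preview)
Your plan takes a fundamentally different route from the paper. The paper's proof of this claim is a five-line edge count with no surgery at all: since $G_\theta^{\mathrm{frac}}$ is a forest there are at most $|U|-1$ fractional edges in $(G-F)[U]$; the $d$ vertices lying in terminal components that meet $U$ in a single point contribute no out-arcs into $U$; and each of the remaining $|U|-d$ vertices has out-degree at most $m-1$ in $D_\theta-H'$. This gives
\[
\e_{G-F}(U)\le(|U|-1)+(m-1)(|U|-d),
\]
and comparing with $\e_{G-F}(U)>m(|U|-2)+1$ rearranges to $(m-1)d<2(m-1)$, hence $d<2$. The optimality of $\theta$ is used only through \cref{lem: forest}; no cycles, no shifts, no case analysis.

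Your surgery approach, by contrast, has a genuine gap at its central step. To apply \cref{obs:shift} you need a cycle through $a_1$ along which $\theta$ is strictly positive in one cyclic direction; in particular you need some edge $a_1b$ with $\theta(a_1,b)>0$. But nothing in your hypotheses excludes the situation where $a_1$ is a singleton terminal component with no fractional edges whatsoever: then every edge of $G$ at $a_1$ is an incoming integral arc, $\theta(a_1,\cdot)\equiv 0$, and \emph{no} cycle through $a_1$ is admissible for \cref{obs:shift}. Your suggested triangle through a common neighbour $b$ fails exactly here: if $\theta(b,a_1)=\theta(b,a_2)=1$ then neither cyclic orientation of $a_1,b,a_2$ has all-positive weights, regardless of whether $a_1a_2$ is fractional. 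Since shifting cannot create positive out-weight at $a_1$ where there was none, the terminal component $\{a_1\}$ is immovable by this mechanism, and the intended contradiction with optimality does not materialise. The minimum-degree observation only tells you $a_1$ has many neighbours in $(G-F)[U]$; it says nothing about the direction of the weight on those edges, and all of them may point into $a_1$.
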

\begin{proof}
    As $G_\theta^{\mathrm{frac}}$ forms a forest by \cref{lem: forest}, there are at most $|U|-1$ fractional edges in $G[U]$. For every vertex $u \in U$ such that there exists a terminal component $W$ of $D_\theta$ satisfying $W \cap U=\{u\}$, there are no arcs in $D_\theta$ from $u$ to $U$. Note that there are exactly $d$ such vertices in $U$, by definition of $d$. Furthermore, every other vertex in $U$ (for which such a terminal component does not exist) has out-degree at most $m-1$ in $D_\theta-H'$, since each such vertex has at most $\lfloor m\rfloor \le m$ out-arcs in $D_\theta$ by definition of $\theta$, and exactly one out-arc is included in $H'$. Bounding the number of integral edges in $(G-F)[U]$ by the sum over the out-degrees of vertices in $(D_\theta-H')[U]$, we get
    $$
    \e_{G-F}(U)\leq (|U|-1)+(m-1)(|U|-d).
    $$
    Comparing this with \eqref{ineq: basis}, we obtain
    $$-2m+2<-(m-1)d,$$
    implying that $d<2$.
\end{proof}
Since for every edge $uv\in E(G)$ it holds that $\theta(u,v)+\theta(v,u)=1$, we find that
\begin{align}\label{ineq: flowsum}
    \e_{G-F}(U)= \sum_{uv\in E((G-F)[U])}{(\theta(u,v)+\theta(v,u))}=\sum_{u\in U}\sum_{v\in N_{G-F}(u)\cap U}\theta(u,v).
\end{align}
Let $X$ be the set of vertices that form singleton terminal components of $D_{\theta}$. Then, by definition of an $m$-allocation and since every vertex outside of $X$ has out-degree $1$ in $H'$, we find that for every $u\in V(G)\setminus X$, we have $\sum_{v\in N_{G-F}(u)\cap U}\theta(u,v) \leq m-1$, whereas for every $u\in X$, we have $\sum_{v\in N_{G-F}(u)\cap U}\theta(u,v)\leq m$.

For now, suppose that for all $u\in U$, we have $\sum_{v\in N_{G-F}(u)\cap U}\theta(u,v)\leq m-1$. Then \eqref{ineq: flowsum} together with \eqref{ineq: basis} gives
$$
    m(|U|-2)+1<(m-1)|U|.
$$
Simplifying, we have $|U|<2m-1$, contradicting \cref{clm: U is big}.
Therefore, there exists at least one vertex $u\in U$ with $\sum_{v\in N_{G-F}(u)\cap U}\theta(u,v)> m-1$. Fix such a vertex and denote it by $u^*$. Observe that $u^*\in U\cap X$, which implies by \cref{clm: d is 1} that $d=1$. In particular, we find that $u^*$ is the unique vertex in $U$ satisfying $\sum_{v\in N_{G-F}(u)\cap U}\theta(u,v)> m-1$. Note that $u^*$ does not have any out-arcs in $D_\theta$. We conclude that $u^*$ is incident to at least $\lfloor m\rfloor$ edges in $G_\theta^{\mathrm{frac}}[U]$. Picking $k\in\mathbb{N}$ and $0\leq\varepsilon<1$ such that $m=k+\varepsilon$, we obtain that $u^*$ is incident to at least $k$ edges in $G_\theta^{\mathrm{frac}}[U]$.
\begin{claim}\label{clm: size of U}
    $2m-1<|U|<2m$.
\end{claim}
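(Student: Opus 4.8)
The lower bound $2m-1<|U|$ is precisely \cref{clm: U is big}, which is already available, so the plan is to put all the work into the upper bound $|U|<2m$, which I would extract directly from the flow identity \eqref{ineq: flowsum} together with the information about $u^*$ gathered just above.

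Concretely, I would argue as follows. Recall that $u^*\in U\cap X$ has been singled out as the \emph{unique} vertex of $U$ with $\sum_{v\in N_{G-F}(u)\cap U}\theta(u,v)>m-1$ — uniqueness is where \cref{clm: d is 1} does its work, since a second such vertex would again lie in $X$ and would force $d\ge 2$. Consequently every $u\in U\setminus\{u^*\}$ contributes at most $m-1$ to the double sum in \eqref{ineq: flowsum}, while $u^*$ contributes at most $\sum_{v\in N_G(u^*)}\theta(u^*,v)\le m$ by the out-degree bound in the definition of an $m$-allocation. Feeding these $|U|-1$ bounds of $m-1$ and the single bound of $m$ into \eqref{ineq: flowsum} gives
\[
\e_{G-F}(U)\;\le\; m+(|U|-1)(m-1)\;=\;m|U|-|U|+1 .
\]
Combining this with \eqref{ineq: basis}, namely $m(|U|-2)+1<\e_{G-F}(U)$, and cancelling the common terms $m|U|$ and $1$ leaves $-2m<-|U|$, i.e.\ $|U|<2m$; together with \cref{clm: U is big} this is exactly the claim.

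I do not expect a genuine obstacle here: once $u^*$ is in place, the statement is a one-line arithmetic consequence of \eqref{ineq: flowsum} and \eqref{ineq: basis}. The only point that needs care is that the contribution of $u^*$ must be bounded by $m$, not by $m-1$ as for the other vertices; this loss of $1$ is precisely what the ``$+1$'' on the left of \eqref{ineq: basis} absorbs, which is why the resulting inequality is still sharp enough to confine $|U|$ to the length-one interval $(2m-1,2m)$ — a confinement that the rest of the argument will presumably exploit to reach the final contradiction.
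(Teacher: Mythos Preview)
Your argument is correct and matches the paper's proof essentially line for line: both invoke \cref{clm: U is big} for the lower bound, and for the upper bound both feed the uniqueness of $u^*$ into \eqref{ineq: flowsum} to obtain $\e_{G-F}(U)\le (m-1)|U|+1$ (your expression $m+(|U|-1)(m-1)$ is the same quantity), then compare with \eqref{ineq: basis}.
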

\begin{proof}
    The lower bound follows from \cref{clm: U is big}.
    Again combining \eqref{ineq: basis} and \eqref{ineq: flowsum}, we get
    $$
        m(|U|-2)+1<\e_{G-F}(U)\leq(m-1)|U|+1,
    $$
    where we use the fact that $u^*$ is the only vertex in $U$ satisfying $\sum_{v\in N_{G-F}(u)\cap U}\theta(u,v)> m-1$.
    The upper bound follows directly.
\end{proof}
\begin{claim}
    $\varepsilon<1/2$.
\end{claim}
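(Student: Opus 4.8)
The plan is to first use \cref{clm: size of U} to determine $|U|$ exactly in terms of $k$ and $\varepsilon$, which collapses the statement to a single case, and then to rule out that case by a sharper upper bound on $\e_{G-F}(U)$ than the ones used in the previous claims.

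For the first step, I would note that the open interval $(2m-1,2m)$ has length exactly $1$, hence contains at most one integer, and contains one precisely when $2m\notin\ZZ$. Since \cref{clm: size of U} places the integer $|U|$ inside this interval, we get $2m=2k+2\varepsilon\notin\ZZ$, so $\varepsilon\notin\{0,\tfrac12\}$; in particular $\varepsilon\ne\tfrac12$, and it remains only to exclude the possibility $\varepsilon>\tfrac12$. If $\varepsilon>\tfrac12$, then both endpoints $2k+2\varepsilon-1$ and $2k+2\varepsilon$ of the interval lie strictly between $2k$ and $2k+2$, so the unique integer it contains is $2k+1$; thus $|U|=2k+1$, and in particular $k\ge1$ since $|U|\ge 3$.

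For the second step, I would bound $\e_{G-F}(U)$ by splitting it into its integral and fractional edges. Since $G_\theta^{\mathrm{frac}}$ is a forest by \cref{lem: forest}, there are at most $|U|-1=2k$ fractional edges in $(G-F)[U]$. Each integral edge of $(G-F)[U]$ is an out-arc of $D_\theta$ with both ends in $U$ that does not lie in $H'$: the vertex $u^*$ contributes none of these, since it has no out-arc in $D_\theta$ at all, while every other vertex of $U$ has at most $\lfloor m\rfloor=k$ out-arcs in $D_\theta$ and exactly one out-arc in $H'$ (here we use that $U\cap X=\{u^*\}$, which follows from $d=1$ via \cref{clm: d is 1}), hence contributes at most $k-1$. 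So there are at most $(|U|-1)(k-1)=2k(k-1)$ integral edges, giving $\e_{G-F}(U)\le 2k(k-1)+2k=2k^2$. On the other hand, \eqref{ineq: basis} with $|U|=2k+1$ reads $\e_{G-F}(U)>(k+\varepsilon)(2k-1)+1$; comparing the two bounds and simplifying gives $\varepsilon(2k-1)<k-1$, and since $2k-1\ge 1$ we conclude $\varepsilon<\tfrac{k-1}{2k-1}<\tfrac12$, contradicting $\varepsilon>\tfrac12$. Hence $\varepsilon<\tfrac12$.

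The main obstacle I anticipate is making the integral-edge count exactly sharp: the earlier claims only exploited that each ordinary vertex of $U$ gives up one out-arc to $H'$, whereas here we additionally need the full loss coming from $u^*$ having \emph{no} out-arc whatsoever in $D_\theta$, together with the fact — extracted from $d=1$ — that $u^*$ is the only vertex of $U$ lying in $X$, so that the ``one fewer out-arc'' saving genuinely applies to every vertex of $U\setminus\{u^*\}$. Both ingredients are consequences of facts already in hand, so beyond this the argument is just arithmetic.
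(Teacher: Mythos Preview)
Your proposal is correct and follows essentially the same approach as the paper: both use \cref{clm: size of U} to pin down $|U|=2k+1$ when $\varepsilon>1/2$, then bound $\e_{G-F}(U)\le 2k^2$ by combining the forest bound on fractional edges with the out-degree bound on integral edges (using that $u^*$ has no out-arcs and every other vertex of $U$ has exactly one out-arc in $H'$), and compare against \eqref{ineq: basis}. The only cosmetic difference is that the paper substitutes $\varepsilon\ge 1/2$ into the lower bound immediately to get $(k+\tfrac12)(2k-1)+1<2k^2$, while you carry $\varepsilon$ through and derive $\varepsilon<\tfrac{k-1}{2k-1}<\tfrac12$ at the end; these are equivalent computations.
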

\begin{proof}
    Suppose towards a contradiction that $\varepsilon\geq 1/2$. Since by \cref{clm: size of U}, $2m-1<|U|<2m$, we get that $\varepsilon>1/2$ and $|U| = 2k+1$. It follows that 
    \begin{equation*}
         \left(k+\frac 12\right)(2k-1)+1<(k+\varepsilon)(2k-1)+1=m(|U|-2)+1<\e_{G-F}(U),
    \end{equation*}
    where we make use of \eqref{ineq: basis} to get the last inequality. On the other hand, every edge in $(G-F)[U]$ corresponds to either an edge of $G_\theta^{\mathrm{frac}}$ or an arc of $D_\theta-H'$. By \cref{lem: forest}, $G_\theta^{\mathrm{frac}}$ is a forest and thus, contributes at most $|U|-1$ edges to $(G-F)[U]$. Since every vertex in $U\setminus \{u^\ast\}$ has at most $k-1$ integral out-arcs in $D_\theta-H'$, and $u^*$ has no out-arcs at all in $D_\theta-H'$, we get that at most $(|U|-1)\cdot (k-1)$ edges of $(G-F)[U]$ correspond to an edge of $D_\theta-H'$. Using these two observations, we get
    \begin{equation*}
    \e_{G-F}(U)\le (|U|-1)+(|U|-1)\cdot (k-1)=2k+2k\cdot (k-1)=2k^2.
    \end{equation*}
    Combining the above lower and bound bound on $e_{G-F}(U)$, we get 
    \begin{equation*}
    \left(k+\frac 12\right)(2k-1)+1<\e_{G-F}(U)\le 2k^2,
    \end{equation*}
    which is easily seen to be a contradiction by subtracting $2k^2$ from both sides.
\end{proof}
So suppose $\varepsilon<1/2$. By \cref{clm: size of U}, we get that $|U|=2k$. Recall that $u^*$ is incident to at least $k$ fractional edges in $G_\theta^{\mathrm{frac}}[U]$. By \cref{lemma: no_edge} and the fact that $d\leq 1$, there do not exist $v,w \in U$ such that both $u^*v$ and $u^*w$ are fractional edges, and such that $(v,w) \in A(D_\theta)\setminus A(H')$. Similarly, by \cref{lem: forest}, there are no fractional edges between fractional neighbours of $u^*$. Thus, the fractional neighbourhood of $u^*$ in $U$ induces an independent set in $G-F$, and therefore
$$
\e_{G-F}(U)\leq \binom{2k}{2}-\binom{k}{2}.
$$
Together with \eqref{ineq: basis}, we have
$$
k(2k-2)+1\leq m(2k-2)+1<k(2k-1)-\frac{k(k-1)}2.
$$
Simplifying, we get
$$
2<k(3-k).
$$
This is a contradiction for every 
$k\in \mathbb{N}$, concluding the proof.
\end{proof}

\section{Main technical lemma for Theorem \ref{thm:43}}\label{sec:technical lemma}
We now begin the proof of \cref{thm:43}, which is the content of this section and \cref{sec:43}.
In this section, we prove the following technical lemma, which allows us to argue that a minimal violating set $U$ must have certain structural properties with respect to the terminal components of $D_\theta$, and moreover allows us to dispense of most values of $m$.
\begin{lemma}\label{lemma: main techincal}
    Let $G$ be a graph with an optimal $m$-allocation $\theta$ for some $m>3/2$. Let $H$ be any spine of $D_\theta$ and $F$ the underlying undirected graph of $H$. Let $U\subseteq V(G)$ be an inclusion-wise minimal set of vertices such that $|U|\ge 2$ and $\e_{G-F}(U)\geq m(|U|-4/3)$ (assuming such a set exists). Then the following hold. 
    \begin{itemize}
        \item Either $m \leq \frac95$ or $m=\frac94$.
        \item $U$ intersects exactly one terminal component of $D_\theta$, and it intersects this component in at least two vertices.
        \item $|U| \leq \frac 43 m + 1$.
    \end{itemize}
\end{lemma}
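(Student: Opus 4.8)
The plan is to assume a minimal violating set $U$ exists and to read off all three conclusions from a careful count of the edges of $(G-F)[U]$. Throughout write $m=k+\varepsilon$ with $k=\lfloor m\rfloor\ge 1$ and $\varepsilon\in[0,1)$. First I would dispose of the trivial case: if $|U|=2$ then $\e_{G-F}(U)\le 1<\tfrac{2m}{3}=m(|U|-\tfrac43)$, which is impossible, so $|U|\ge 3$. Minimality then gives, for every $v\in U$,
\[
\deg_{(G-F)[U]}(v)=\e_{G-F}(U)-\e_{G-F}(U\setminus\{v\})>m(|U|-\tfrac43)-m(|U|-1-\tfrac43)=m,
\]
so $\deg_{(G-F)[U]}(v)\ge k+1$. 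Finally, since no fractional edge of $G$ lies in $F$, the fractional edges of $(G-F)[U]$ are precisely those of $G_\theta^{\mathrm{frac}}[U]$, which by \cref{lem: forest} form a forest and hence number at most $|U|-1$; writing $I$ for the number of integral edges of $(G-F)[U]$ (equivalently, the number of arcs of $(D_\theta-H)[U]$, using that an integral edge cannot have both of its orientations as arcs), we get $m(|U|-\tfrac43)\le\e_{G-F}(U)\le I+|U|-1$.

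Next I would analyse $I$ through the spine $H$. If $u\in U$ lies in a non-terminal component of $D_\theta$, or is a non-exceptional vertex of its terminal component, then $u$ has an out-arc in $H$, so $\mathrm{outdeg}_{(D_\theta-H)[U]}(u)\le k-1$; equivalently $\sum_{v\in N_{G-F}(u)\cap U}\theta(u,v)\le m-1$. Call the remaining vertices of $U$ \emph{exceptional} (there is at most one per terminal component met by $U$) and let $d$ be their number. If $u$ is exceptional and lies in the terminal component $W$, then all out-arcs of $u$ stay inside $W$, so $\mathrm{outdeg}_{(D_\theta-H)[U]}(u)\le\min\{k,|W\cap U|-1\}$; moreover if $u$ has at least $|W\cap U|$ out-arcs in $D_\theta$ then its total fractional out-weight is at most $m-|W\cap U|$, whence $\sum_{v\in N_{G-F}(u)\cap U}\theta(u,v)\le(|W\cap U|-1)+(m-|W\cap U|)=m-1$ and $u$ may be discounted. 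So we may assume every exceptional vertex has fewer than $|W\cap U|$ out-arcs in $D_\theta$. Feeding these bounds into the inequality above, together with the forest bound and minimality applied to $U$ with an exceptional vertex deleted, I would deduce $d\le 1$. Granting this, the coarse estimate $\e_{G-F}(U)=\sum_{u\in U}\sum_{v\in N_{G-F}(u)\cap U}\theta(u,v)\le(|U|-1)(m-1)+m$ combined with $\e_{G-F}(U)\ge m(|U|-\tfrac43)$ yields $|U|\le\tfrac43 m+1$, the third bullet.

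For the second bullet I would rule out the bad configurations in turn. If $U$ met two distinct terminal components $W_1\ne W_2$, then every $G$-edge between $W_1\cap U$ and $W_2\cap U$ is fractional (an arc would leave a terminal component), and by \cref{lemma: no triangle between terminal components} the set $N_G(u)\cap W_2$ is independent in $G$ for each $u\in W_1\cap U$; together with the forest bound on $G_\theta^{\mathrm{frac}}[U]$ and minimality applied to $W_1\cap U$, this is too sparse to sustain $\e_{G-F}(U)\ge m(|U|-\tfrac43)$. If $U$ met no terminal component, then every vertex of $U$ has out-degree at most $k-1$ in $(D_\theta-H)[U]$, so $\e_{G-F}(U)\le k|U|-1$; combining this with $\deg_{(G-F)[U]}(v)\ge k+1$, the forest bound, and, where the bare counts do not close the gap, a shift along a suitable cycle (\cref{obs:shift}) that redirects an out-arc of $U$ into a terminal component and, by the analysis in \cref{lem:components}, would contradict the optimality of $\theta$, one reaches a contradiction. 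Hence $U$ meets exactly one terminal component $W$. If $|W\cap U|=1$, say $W\cap U=\{u\}$, then $u$ contributes no integral out-arc to $U$; a short analysis (using \cref{lemma: no_edge} when $\{u\}$ is itself a singleton terminal component, and optimality of $\theta$ together with \cref{lemma: no triangle between terminal components} otherwise) rules this out, so $|W\cap U|\ge 2$.

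For the first bullet, write $U=A\sqcup B$ with $A=W\cap U$ (so $|A|\ge 2$) and $B$ contained in non-terminal components. Now sharpen the count using that the out-arcs of $A$-vertices are confined to $A$, that every integral $A$--$B$ edge is oriented from $B$ to $A$, that $(G-F)[U]$ has at most $|U|-1$ fractional edges, that $\deg_{(G-F)[U]}(v)\ge k+1$, and that $|U|\le\tfrac43 m+1$. Substituting these into $\e_{G-F}(U)\ge m(|U|-\tfrac43)$ and analysing the resulting inequalities in $k,|A|,|B|$ and $\varepsilon$ shows first that $k\le 2$, and then that $k=1$ and $\varepsilon\le\tfrac45$ (that is, $m\le\tfrac95$), or $k=2$ and $\varepsilon=\tfrac14$ (that is, $m=\tfrac94$). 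The hardest part throughout is keeping the estimates sharp enough: the naive counts only give $d$ bounded by a small constant, $|U|=O(m)$, and a broad range of admissible $(k,\varepsilon)$, so squeezing out $d\le 1$, the statement that $U$ meets exactly one terminal component in at least two vertices, and the short final list of admissible $(k,\varepsilon)$ all rely on repeatedly invoking the optimality of $\theta$ together with the forest and independence structure recorded in \cref{lem: forest}, \cref{lemma: no triangle between terminal components} and \cref{lemma: no_edge}; the two-terminal-component case and the $|W\cap U|=1$ case, where the bare degree and edge counts are on their own satisfiable, are where these structural inputs do the real work.
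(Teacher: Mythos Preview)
Your proposal identifies the right ingredients (the forest bound from \cref{lem: forest}, the $\theta$-sum identity, minimality, and the structural lemmas \cref{lemma: no triangle between terminal components,lemma: no_edge}), and your minimum-degree observation $\deg_{(G-F)[U]}(v)>m$ is a clean consequence of minimality that the paper never states. But what you have written is an outline, not a proof: virtually every non-trivial step is deferred to phrases like ``I would deduce $d\le 1$'', ``too sparse to sustain'', ``a short analysis rules this out'', and ``analysing the resulting inequalities shows\dots''. In the paper, exactly these steps occupy several pages of case analysis over the pairs $(c,d)$ with $c$ the number of terminal components meeting $U$ in $\ge 2$ vertices and $d$ the number meeting $U$ in one vertex; the cases $(2,0),(1,1),(1,2),(0,1)$ in particular require real work and do not fall to the coarse counts you record.

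There is also at least one place where your concrete plan fails. In the ``$U$ meets no terminal component'' case you bound $\e_{G-F}(U)\le k|U|-1$ and then appeal to the minimum degree $k+1$ and, if that is not enough, to an unspecified shift. The integer bound $k|U|-1$ is strictly weaker than the $\theta$-sum bound $(m-1)|U|$, and neither this nor the minimum-degree bound $|U|\ge k+2$ yields a contradiction for large $k$; moreover a shift cannot help, since the hypothesis concerns the given optimal $\theta$ and a fixed set $U$, not the existence of some allocation. The paper dispatches this case in two lines by a bound you never invoke: $\binom{|U|}{2}\ge \e_{G-F}(U)\ge m(|U|-\tfrac43)$ forces $|U|>2m-1$, while the $\theta$-sum bound gives $|U|\le\tfrac43 m$, contradicting $m>\tfrac32$. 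Similarly, your one-sentence dismissal of the two-terminal-component and $|W\cap U|=1$ configurations does not survive contact with the actual inequalities; in the paper these are precisely the cases (e.g.\ $(2,0)$ and $(0,1)$) where the independence statement in \cref{lemma: no triangle between terminal components} and the singleton statement in \cref{lemma: no_edge} must be combined with several rounds of counting, not invoked once. Finally, your derivation of the first bullet is entirely promissory; the paper pins down $m\le\tfrac95$ or $m=\tfrac94$ by explicit arithmetic in the $|U|=3$ and $|U|=4$ subcases of $(c,d)=(1,0)$, and nothing in your write-up does that arithmetic.
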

\begin{proof}
By assumption, we have that $|U|\geq 2$ and
\begin{align}\label{tech ineq: basis}
    \e_{G-F}(U)\geq m\left(|U|-\frac 43\right).
\end{align}
Note that the above inequality together with $m>3/2$ implies that $|U|>2$.
Let us consider the collection $\mathcal{C}$ consisting of all strong components of $D_\theta$ that intersect $U$ in at least one vertex. Let us further split $\mathcal{C}$ into three parts, namely $\mathcal{C}_1$, consisting of all non-terminal components in $\mathcal{C}$, $\mathcal{C}_2$, consisting of all terminal components in $\mathcal{C}$ that contain at least two vertices of $U$, and $\mathcal{C}_3$, consisting of all terminal components in $\mathcal{C}_3$ that intersect $U$ in precisely one vertex. Clearly, the intersections $(X \cap U)_{X\in \mathcal{C}}$ form a partition of $U$. In the following, let us denote $U'\coloneqq \bigcup_{X\in \mathcal{C}_1}{(X \cap U)}$ and let $c\coloneqq |\mathcal{C}_2|,\ d\coloneqq |\mathcal{C}_3|$.

\subsection{Upper bounds on \texorpdfstring{$\e_{G-F}(U)$}{e\_\{G-F\}(U)}}
An essential strategy towards proving \cref{lemma: main techincal} is to compare various upper bounds on $\e_{G-F}(U)$ to find a contradiction with \eqref{tech ineq: basis}. In this subsection we give several upper bounds on $\e_{G-F}(U)$ which will be useful later.

Recall that the edges of $G$ are partitioned into $E_{\text{frac}}(G,\theta)$ and $E_{\text{int}}(G,\theta)$. Therefore,
\begin{align}\label{tech ineq: basic frac + integ}
    \e_{G-F}(U) = |E(G_\theta^\text{frac}[U])|+|A((D_\theta-H)[U])|.
\end{align}
Since $G^\text{frac}_\theta$ is a forest by \cref{lem: forest}, there are at most $|U|-1$ fractional edges. Additionally, we know that every vertex has at most $\lfloor m\rfloor$ out-arcs in $D_\theta$ by definition of $\theta$, at most $\lfloor m\rfloor-1$ out-arcs in $(D_\theta-H)[U]$ if it is not the root of a terminal component in $\mathcal{C}_2$ by \cref{def:spine}, and no out-arcs in $D_\theta[U]$ if it is a vertex in $\mathcal{C}_3$. Thus, we get
\begin{align}\label{tech ineq: frac + integ}
    \e_{G-F}(U)\leq |U|-1 + (\lfloor m\rfloor-1)(|U|-d)+c.
\end{align}
Yet another way of counting $\e_{G-F}(U)$, similarly as in \eqref{ineq: flowsum} in the proof of \cref{thm:pseudoforest}, is by summing up $\theta(u,v)+\theta(v,u)=1$ for each $uv\in E((G-F)[U])$. That is,
\begin{align}\label{tech ineq: theta sum}
    \e_{G-F}(U)=\sum_{(u,v) \in U^2, uv\in E(G-F)}{\theta(u,v)}=\sum_{u \in U}\sum_{v \in N_{G-F}(u)\cap U}{\theta(u,v)}.
\end{align}

\subsection{Upper and lower bounds on \texorpdfstring{$|U|$}{|U|}}
Comparing the inequalities of the previous subsection, we can obtain bounds on the size of $U$. Our next few claims provide such bounds, in terms of $c,d,m$, and the size of $U'$.
\begin{claim}\label{claim: u>2m-1}
    $|U|>2m-1.$
\end{claim}
\begin{proof}
    Suppose $|U|\leq 2m-1$. Together with \eqref{tech ineq: basis}, we get
    $$
        \binom{|U|}{2}\geq \e_{G-F}(U) \geq m\left(|U|-\frac 43\right)\geq\frac{|U|+1} 2 \left(|U|-\frac 43\right),
    $$
    which simplifies to $|U| \leq 2$, a contradiction.
\end{proof}

The following claim uses the above inequality to get an upper bound on the size of $U$.
\begin{claim}\label{claim: uupper}
    $|U|\le \frac{4}{3}m+c+d$. 
\end{claim}
\begin{proof}
Consider a vertex $u \in U$. Since $\theta$ is an $m$-allocation, we have $\sum_{v \in N_G(u)}{\theta(u,v)} \le m$. Therefore, if $u$ has out-degree one in $H$, then $$\sum_{v \in N_{G-F}(u)}{\theta(u,v)}\le m-1.$$ By \cref{def:spine}, every vertex in $U'$ has out-degree $1$ in $H$, and at most one vertex in each of the sets $(U \cap X)_{X\in \mathcal{C}_2\cup \mathcal{C}_3}$ can have out-degree $0$ in $H$, so in total there can be at most $c+d$ such vertices in $U$. Together with \eqref{tech ineq: theta sum}, this implies
$$\e_{G-F}(U)\le (m-1)(|U|-c-d)+m(c+d)=(m-1)|U|+c+d.$$
Combining with \eqref{tech ineq: basis}, we find
$$m\left(|U|-\frac{4}{3}\right)\le (m-1)|U|+c+d,$$
and rearranging yields $|U|\le\frac{4}{3}m+c+d$, as claimed.
\end{proof}
Observe that the previous claim implies that $|U|\leq \frac{4}{3}m+1$ if $(c,d)=(1,0)$, which was the third claimed result in the statement of \cref{lemma: main techincal}. Therefore, to complete the proof, it remains to prove that $(c,d)=(1,0)$ and either $m<9/5$ or $m=9/4$.

\begin{claim}\label{claim:ulower} If $(c,d)\neq(1,0)$, then $|U|\ge \frac{4}{3}mc+md+|U'|-\frac{4}{3}m+1$. If additionally $c>0$, then $|U|>\frac{4}{3}mc+md+|U'|-\frac{4}{3}m+1$. \end{claim}
\begin{proof}
Suppose that $(c,d)\neq (1,0)$, and let us prove the stated inequality. By the minimality of $U$, we have that $\e_{G-F}(U)\ge m(|U|-\frac{4}{3})$ but $\e_{G-F}(Y)< m(|Y|-\frac{4}{3})$ for every $Y\subsetneq U$ with $|Y|\ge 2$. Thus, we have $\e_{G-F}(X\cap U)< m(|X\cap U|-\frac{4}{3})$ for every $X \in \mathcal{C}_2$ since no $X\in \mathcal{C}_2$ fully contains $U$ as otherwise $(c,d)=(1,0)$. We furthermore trivially have $\e_{G-F}(X \cap U)=0$ for every $X\in \mathcal{C}_3$. 

Our goal in the following will be to give an upper bound on the number of all edges in $\e_{G-F}(U)$ based on \eqref{tech ineq: basic frac + integ}. As discussed earlier, $G_\theta^\text{frac}[U]$ is a forest by \cref{lem: forest} and hence contains at most $|U|-1$ edges. 

To bound the number of arcs in $(D_\theta-H)[U]$, we split the arc-set of this digraph into those arcs that start in the set $U'$ and those arcs that start in $U \setminus U'$. Since $H$ is a spine, every vertex in a non-terminal component has out-degree $1$ in $H$ and out-degree at most $m$ in $D_\theta$ by definition of $\theta$. Thus, we obtain that the number of arcs in $(D_\theta-H)[U]$ starting in $U'$ is upper-bounded by $(m-1)|U'|$. Secondly, since by definition all components in $\mathcal{C}_2$ and $\mathcal{C}_3$ are terminal and thus have no arcs in $D_\theta$ leaving them, the number of arcs in $(D_\theta-H)[U]$ starting in $U\setminus U'$ is at most $\sum_{X\in \mathcal{C}_2\cup \mathcal{C}_3}{\e_{G-F}(X\cap U)}$. Altogether, we obtain the following upper bound on the number of edges in $(G-F)[U]$:
$$\e_{G-F}(U)\le (|U|-1)+(m-1)|U'|+\sum_{X\in \mathcal{C}_2\cup \mathcal{C}_3}{\e_{G-F}(X\cap U)}.$$ Plugging in the upper bounds on $\e_{G-F}(X\cap U)$ for $X\in \mathcal{C}_2\cup \mathcal{C}_3$ mentioned above and comparing with \eqref{tech ineq: basis}, it follows that
\begin{align*}
    m\left(|U|-\frac{4}{3}\right)&\le (|U|-1)+(m-1)|U'|+\sum_{X \in \mathcal{C}_2}{m\left(|X\cap U|-\frac{4}{3}\right)}\\
    &=(|U|-1)+(m-1)|U'|+m(|U|-|U'|-d)-\frac{4}{3}mc\\
    &=(m+1)|U|-|U'|-\frac{4}{3}mc-md-1.
\end{align*}
Note that the inequality above is strict if $c>0$. Rearranging now yields that $|U|\ge \frac{4}{3}mc+md+|U'|-\frac{4}{3}m+1$ with strict inequality if $c>0$, as desired.
\end{proof}
\subsection{Consequences}
Combining \cref{claim:ulower,claim: uupper} now directly yields the following.

\begin{claim}\label{claim:cdinequality}
If $(c,d)\neq (1,0)$ then $(\frac{4}{3}m-1)c+(m-1)d+|U'|\le \frac{8}{3}m-1$, with a strict inequality if $c>0$. 
\end{claim}
\begin{proof}
Combining the lower bound on $|U|$ from \cref{claim:ulower} and the upper bound on $|U|$ from \cref{claim: uupper} yields that
$$\frac{4}{3}mc+md+|U'|-\frac{4}{3}m+1 \le |U|\le \frac{4}{3}m+c+d.$$ Rearranging now implies
$$\left(\frac{4}{3}m-1\right)c+(m-1)d+|U'|\le \frac{8}{3}m-1.$$
Furthermore, if $c>0$ then the inequality in \cref{claim:ulower} is strict, yielding a strict inequality.
\end{proof}

Using the previous arguments, we are able to obtain a case distinction into a finite number of options for the pair $(c,d)$. The rest of the proof will involve dealing with each case in turn.

\begin{claim} \label{claim:CDBOUND}
We have that $c\le 2$. Furthermore, the following hold:
\begin{itemize}
    \item If $c=0$, then $d \le 1$.  
    \item If $c=1$, then $d\le 3$ and $d\le 2$ provided $m\ge 9/5$. 
    \item If $c=2$, then $d\le 1$ and $d=0$ provided $m\ge 2$. 
\end{itemize}
\end{claim}
\begin{proof}
The claim trivially holds if $(c,d)=(1,0)$, so assume in the following that $(c,d)\neq (1,0)$.

Suppose first towards a contradiction that $c=0$ and $d \ge 2$. By \cref{claim:ulower} we have $|U| \ge md+|U'|-\frac{4}{3}m+1$. Since we also have $|U|=|U'|+d$, it follows that $md+|U'|-\frac{4}{3}m+1\le |U'|+d$, and thus $(m-1)d\le \frac{4}{3}m-1$. This is a contradiction, since $(m-1)d\ge 2(m-1)>\frac{4}{3}m-1$ for $m >\frac{3}{2}$. 

Next suppose that $c>0$. This implies, by \cref{claim:cdinequality} that $$\left(\frac{4}{3}m-1\right)c+(m-1)d+|U'|< \frac{8}{3}m-1,$$ and thus 
\begin{equation}\label{eq:d ub}
    d<\frac{\frac{4}{3}m(2-c)+c-1}{m-1}. 
\end{equation}

Suppose first that $c=1$. Then \eqref{eq:d ub} implies $d<\frac{\frac{4}{3}m}{m-1}<\frac{2}{\frac{3}{2}-1}=4$, so $d\le 3$, as claimed. If $m>\frac{9}{5}$, we obtain the stronger bound $d<\frac{12/5}{4/5}=3$, so $d\le 2$. 

Similarly, if $c=2$, then $d<\frac{1}{m-1}<\frac{1}{\frac{3}{2}-1}=2$, and so $d\le 1$. If $m\ge 2$, we obtain the stronger bound $d<1$ and thus $d=0$. 

If $c\ge 3$, we obtain $d<\frac{2-\frac{4}{3}m}{m-1}<0$, a contradiction, and thus we must have $c\le 2$. 
\end{proof}
Let us now assume towards a contradiction that one of $(c,d)\neq (1,0)$, $9/5<m<9/4$ or $m>9/4$ holds. Based on this and the previous claim, the proof now splits into the following cases, which we list in the order that we resolve them.
\begin{enumerate}[label=(\alph*)]
\item\label{(0,0)} $(c,d) = (0,0)$,
\item\label{(1,3)} $(c,d) = (1,3)$ and $m<9/5$,
\item\label{(1,0)_large} $(c,d) = (1,0)$ and $m>9/4$,
\item\label{(1,0)_small} $(c,d) = (1,0)$ and $9/5<m<9/4$,
\item\label{(2,1)} $(c,d) = (2,1)$ and $m<2$,
\item\label{(2,0)} $(c,d) = (2,0)$,
\item\label{(1,2)} $(c,d) = (1,2)$,
\item\label{(0,1)} $(c,d) = (0,1)$,
\item\label{(1,1)} $(c,d) = (1,1)$.
\end{enumerate}
\subsection{Easy arguments based on set sizes}
We now begin the case distinction to handle all the cases delineated in the above list. Many of the cases can be dealt with very directly by simply comparing the upper and lower bounds we have on the sizes of various sets. As all these arguments are simple and similar, we collect them all in this subsection.
\paragraph{\textbf{Case \ref{(0,0)}}}
    If $(c,d)=(0,0)$, then from \cref{claim: u>2m-1,claim: uupper} we obtain that
    $$2m-1<|U|\le \frac{4}{3}m+0+0,$$ which simplifies to $m<\frac{3}{2}$, a contradiction.

\paragraph{\textbf{Case \ref{(1,3)}}}
    Suppose $(c,d)=(1,3)$ and $3/2<m<9/5$. Then $|U|\geq 2c+d= 5$. Since there are no arcs in $D_\theta$ between different terminal components and $m<2$, the only vertex in $U$ with an out-arc in $D_\theta-H$ is the root of the component in $\mathcal{C}_2$ if it is in $U$. Otherwise there are no integral edges at all. Using that $G_\theta^{\mathrm{frac}}$ is a forest by \cref{lem: forest}, it follows from \eqref{tech ineq: basis} and \eqref{tech ineq: basic frac + integ} that
    $$
    \frac{3|U|}2-2<m\left(|U|-\frac 43\right)\leq \e_{G-F}(U)\leq (|U|-1)+1=|U|,
    $$
    where the first inequality uses that $m>3/2$. Rearranging yields that $|U|<4$, a contradiction.

\paragraph{\textbf{Case \ref{(1,0)_large}}}
    Suppose $(c,d)=(1,0)$ and $m>\frac{9}{4}$. By \cref{claim: uupper} we then have $|U|\le \frac{4}{3}m+1$. Using \eqref{tech ineq: basis}, it follows that
    $$m\left(|U|-\frac{4}{3}\right)\le \e_{G-F}(U)\le \binom{|U|}{2}=\frac{|U|-1}{2}|U|\le \frac{2}{3}m|U|.$$ Rearranging now yields that $|U|\le 4$. If $|U|=4$, then from the above we obtain $m(4-\frac{4}{3})\le 6$ and thus $m\le \frac{9}{4}$, a contradiction. Similarly, if $|U|=3$, then $m(3-\frac{4}{3})\le 3$, contradicting $m>\frac{9}{4}$.

\paragraph{\textbf{Case \ref{(1,0)_small}}}
    Suppose that $(c,d)=(1,0)$ and $9/5<m<9/4$. By \cref{claim: uupper}, we have $|U|\leq \frac 43 m+1<4$, so $|U|\leq 3$ and thus $|U|=3$. Using \eqref{tech ineq: basis}, we find $$\e_{G-F}(U)\geq m\left(|U|-\frac 43\right) = \frac{5m}3 > 3,$$ 
    which gives a contradiction since $\e_{G-F}(U)$ can contain at most $3$ edges.
\paragraph{\textbf{Case \ref{(2,1)}}}
    Suppose $(c,d)=(2,1)$ and $3/2<m<2$. Note that $|U|\geq 2c+d= 5$. By \cref{claim: uupper}, we get
    $$
    |U|\leq \frac{4}{3}m+3<6.
    $$
    It follows that $|U| = 5$. Note that this implies that $U$ shares exactly two vertices with two terminal components of $D_\theta$ and exactly one vertex with a third terminal component of $D_\theta$. Observe that there can be at most two arcs in $D_\theta[U]$ since there are no arcs between different terminal components. By \eqref{tech ineq: basic frac + integ} together with the fact that $G_\theta^\text{frac}$ is a forest by \cref{lem: forest}, we get
    $$
    \e_{G-F}(U)\leq 4 + 2 = 6.
    $$
    By \eqref{tech ineq: basis}, we get
    $$
    \frac{11}2<m\left(5-\frac43\right)\leq \e_{G-F}(U).
    $$
    It follows that $(G-F)[U]$ contains exactly two integral edges and four fractional edges, that is, the fractional edges are a tree. Let $(u,v)\in A(D_\theta[U])$ be an arc corresponding to one of those two integral edges. Since the fractional edges form a tree on $U$, there exists a path $P$ in $G^\text{frac}_{\theta}[U]$ from $u$ to $v$. Note that each of these fractional edges connects two different terminal components of $D_\theta$, since all the edges in $G[U]$ within a terminal component are contained in $D_\theta$. The existence of such a path contradicts the conclusion of \cref{lemma: no triangle between terminal components}.

\subsection{Trickier arguments involving more careful counting}
In this section we resolve the remaining cases, which turn out to be more involved. Let us start with the following claim.
\begin{claim}\label{claim:Uprimeempty}
If $(c,d)= (1,2)$ or $(c,d)= (2,0)$, then $U'=\emptyset$.
\end{claim}
\begin{proof}
    By \cref{claim:cdinequality} we have $|U'|<\frac{8}{3}m-1-(\frac{4}{3}m-1)c-(m-1)d$. For $(c,d)= (1,2)$ and $(c,d)=(2,0)$ we have that $|U'|<2-\frac{2}{3}m< 2-\frac{2}{3}\cdot\frac{3}{2}=1$ and $|U'|<1$ respectively, so in either case we may conclude $U'=\emptyset$. 
\end{proof}

\paragraph{\textbf{Case \ref{(2,0)}}}
Suppose that $(c,d)=(2,0)$. By \cref{claim:Uprimeempty} we then have $U'=\emptyset$. 
Let $C_1, C_2$ denote the two terminal components that intersect $U$, and let $U_1=U\cap C_1$ and $U_2=U\cap C_2$. Note that, by definition of $c$, we have $|U_1|, |U_2|\geq 2$ and since $d=0$ and $U'=\emptyset$ we have $U_1 \cup U_2=U$.
If we denote by $\e_{G-F}(U_1,U_2)$ the number of edges with one endpoint in each $U_i$, we have that
$$\e_{G-F}(U) = \e_{G-F}(U_1) + \e_{G-F}(U_2) + \e_{G-F}(U_1,U_2)\le \binom{|U_1|}{2}+\binom{|U_2|}{2}+(|U|-1),$$
where we used that all the edges in $G-F$ going between $U_1$ and $U_2$ connect the two terminal components $C_1$ and $C_2$ and are thus fractional, and that by \cref{lem: forest} the fractional edges form a forest.

Furthermore, we claim that  the three estimates $\e_{G-F}(U_1)\leq\binom{|U_1|}{2}$, $\e_{G-F}(U_2)\leq\binom{|U_2|}{2}$ and $\e_{G-F}(U_1,U_2)\leq|U|-1$ cannot all be equalities simultaneously. Indeed, if $\e_{G-F}(U_1, U_2)=|U|-1$ then the edges going across $U_1$ and $U_2$ form a tree on $|U|\geq 4$ vertices, meaning there exists a vertex in either $U_1$ or $U_2$ incident to at least two such edges. But if both $U_1$ and $U_2$ are cliques, this would imply that
there exists a triangle in $G-F$ formed by two fractional edges that cross between $U_1$ and $U_2$ and one integral edge inside $U_1$ or $U_2$, a contradiction to \cref{lemma: no triangle between terminal components}. We therefore obtain 
$$\e_{G-F}(U) \le \binom{|U_1|}{2}+\binom{|U_2|}{2}+(|U|-2).$$
By convexity of $\binom x2$ we now get
\begin{align*}
&\e_{G-F}(U)\leq \binom 22 + \binom{|U|-2} {2} + |U| - 2\\
& =  \frac{(|U|-2)(|U|-3)}2 + |U|-1 \\
& =  \frac{(|U|-2)(|U|-1)}2 + 1.
\end{align*}
By \eqref{tech ineq: basis}, we have $\e_{G-F}(U)\ge m(|U|-\frac{4}{3})=m(|U|-2)+\frac{2}{3}m>m(|U|-2)+1$. The above then implies that $(|U|-1)/2>m$, or, equivalently, $|U|>2m+1$. This contradicts \cref{claim: uupper}, which guarantees that $|U|\le \frac{4}{3}m+2=2m+1+(1-\frac{2}{3}m)<2m+1$, using that $m>\frac{3}{2}$.

\paragraph{\textbf{Case \ref{(1,2)}}}
    Suppose $(c,d)=(1,2)$. Let $k\in \mathbb{N}$ and $0<\varepsilon\leq1$ be such that $m=k+\varepsilon$. 
    By \cref{claim:ulower}, we have that $|U|>2m+1$. But we also have $|U|\leq \frac 43 m+3$ by \cref{claim: uupper}, which implies that $|U|< 2m+2$ since  $m>3/2$.
    We thus get that \begin{align}\label{ineq: U size (1,2)}
        2m+1<|U|<2m+2.
    \end{align}
    This is impossible if $m$ is an integer, so we may assume that $0<\varepsilon<1$, and thus that $k=\lfloor m \rfloor$.
    By \cref{claim:Uprimeempty}, we know that $U'$ is empty. Let $u_1,u_2$ be the two vertices from components in $\mathcal{C}_3$ and let $X$ be the remaining vertices in $U$, which are vertices of the component in $\mathcal{C}_2$. Note that $u_1,u_2$ are incident to no arcs in $D_\theta[U]$ since $U'$ is empty and no arcs of $D_\theta$ connect distinct terminal components. 
    
    Suppose first that $1/2<\varepsilon< 1$. By \eqref{ineq: U size (1,2)}, we have that $|U|=2k+3$. Using \eqref{tech ineq: basis} and \eqref{tech ineq: frac + integ}, we get
    $$
    \left(k+\frac 12\right)\left(|U|-\frac 43\right)<m\left(|U|-\frac 43\right)\leq (|U|-1) + (k-1)(|U|-2)+1.
    $$
    Simplifying, we arrive at
    $$
    \frac{|U|-4/3}2+\frac{2k}3< 2.
    $$
    Using now that $|U|=2k+3$, we get
    $5k/3+5/6< 2$,
    which is equivalent to $k<7/10$. But $k\geq 1$, a contradiction.

    Suppose then that $0<\varepsilon\leq 1/2$. By \eqref{ineq: U size (1,2)}, we get $|U| = 2k+2$. If $u_1$ is incident to at most $k$ fractional edges in $U$, then $U \setminus \{u_1\}$ contradicts the minimality of $U$ with respect to $e_{G-F}\geq m(|U|-4/3)$ (and the same holds for $u_2$). So we have that $u_1,u_2$ are each incident to at least $k+1$ edges in $G_\theta^{\mathrm{frac}}[U]$. Recall that by \cref{lem: forest} $G_\theta^{\mathrm{frac}}$ is a forest. First, suppose that $u_1u_2\notin E(G_\theta^{\mathrm{frac}})$. Thus, they both have at least $k+1$ edges to $X$ in $G_\theta^{\mathrm{frac}}$. Using that $|U|-2 = |X| = 2k$, we conclude that $|N_{G_\theta^{\mathrm{frac}}}(u_1)\cap N_{G_\theta^{\mathrm{frac}}}(u_2)|\geq 2$, yielding a copy of $C_4$ in $G_\theta^{\mathrm{frac}}$ and contradicting that $G_\theta^{\mathrm{frac}}$ is a forest. So we may assume that $u_1u_2$ is an edge of $G_\theta^{\mathrm{frac}}$. Then $|N_{G_\theta^{\mathrm{frac}}}(u_1)\cap N_{G_\theta^{\mathrm{frac}}}(u_2)| = 0$, for otherwise $G_\theta^{\mathrm{frac}}$ would contain a triangle, again contradicting \cref{lem: forest}. Therefore, each of the $2k$ vertices of $X$ is a neighbour of either $u_1$ or $u_2$. Since by \cref{lemma: no triangle between terminal components} the sets $N_G(u_1)\cap X, N_G(u_2)\cap X$ must be independent (and of size $k$), we obtain that $\e_{G-F}(X)\leq k^2$. It follows that
    $\e_{G-F}(U)\leq k^2+2k+1$.
    Comparing this to \eqref{tech ineq: basis}, we get
    $$
    k\left(2k+\frac 23\right)\leq k^2+2k+1,
    $$
    which is not true for any $k \geq 2$.
    But since $k+\varepsilon = m>\frac 32 \geq 1+\varepsilon$, we have that $k\geq 2$, a contradiction.

\paragraph{\textbf{Case \ref{(0,1)}}}
Suppose $(c,d)=(0,1)$. By \cref{claim: uupper,claim: u>2m-1}, we get
$$
2m-1<|U|\leq \frac{4}{3}m+1<2m,
$$
where we use $m>3/2$.
Let us fix $k\in\mathbb{N}$ and $0<\varepsilon\leq 1$ such that $m=k+\varepsilon$. Note that in fact, the bounds $2m-1<|U|<2m$ are impossible to achieve if $m$ is an integer, and therefore, we may assume that $\varepsilon<1$ and thus that $k=\lfloor m \rfloor$. Let $u^*$ be the vertex in the terminal component of $D_\theta$ which is intersected by $U$.

Suppose first that $1/2\leq\varepsilon< 1$. Since $2m-1<|U|<2m$, we get that $\varepsilon>1/2$ and $|U| = 2k+1$. Using \eqref{tech ineq: basis} and \eqref{tech ineq: frac + integ}, we get
    $$
    \left(k+\frac 12\right)\left(2k-\frac 13\right)\leq m\left(2k-\frac 13\right)<2k+(k-1)2k=2k^2,
    $$
    which we simplify to
    $$
    \frac{2k-1/3}2\leq \frac k3.
    $$
    But this is a contradiction, since $(2k-1/3)/2=k-\frac{1}{6}\geq \frac{5}{6}k$.
    
    So we may assume that $0<\varepsilon<1/2$. Then $k\geq 2$ and we find that $|U|=2k$. Suppose that $\sum_{v\in N_{G-F}(u^*) \cap U}\theta(u^*,v)\leq m-1$. Using \eqref{tech ineq: basis} together with \eqref{tech ineq: theta sum}, we get
    $$
    m\left(2k-\frac 43\right)\leq (m-1)2k.
    $$
    Simplifying yields 
    $$
    2k\leq \frac{4}{3}m<\frac{4}{3}k+\frac{2}{3},
    $$
    which does not hold for any positive integer $k$.
    It follows that $\sum_{v\in N_{G-F}(u^*) \cap U}\theta(u^*,v)> m-1\geq k-1$. Note that $u^*$ does not have an out-arc in $D_\theta[U]$ since it is part of a terminal component of $D_\theta$ intersecting $U$ in a single vertex. By definition of $\theta$, it follows therefore that if $u^*$ has an out-arc in $D_\theta$ then $u^*$ has at most $m-1$ outgoing weight to $U$ in $\theta$, contradicting the computation above. Thus, $u^*$ does not have any out-arcs in $D_\theta$ and hence, forms a singleton terminal component. 
    Thus, we obtain that $u^*$ must be incident to at least $k$ fractional edges in $G_\theta^{\mathrm{frac}}[U]$. By \cref{lemma: no_edge}, there cannot exist any arc of $(D_\theta-H)[U]$  between two neighbours of $u^*$ in $G_\theta^{\mathrm{frac}}$, since this would imply that one of those two neighbours also forms a singleton terminal component contained in $U$, contradicting our assumption that $d=1$. Also, no two neighbours of $u^*$ in $G_\theta^{\mathrm{frac}}[U]$ can be joined by a fractional edge, since $G_\theta^{\mathrm{frac}}$ is a forest. Hence, the endpoints of those $k$ fractional edges incident to $u^*$ in $G_\theta^{\mathrm{frac}}[U]$ form an independent set in $(G-F)[U]$. We conclude that
$$
\e_{G-F}(U)\leq \binom{2k}{2}-\binom{k}{2}.
$$
Together with \eqref{tech ineq: basis}, we have
$$
k\left(2k-\frac 43\right)< m\left(2k-\frac 43\right)\leq k(2k-1)-\binom k2.
$$
Simplifying, we get
$$
\binom{k}{2}<\frac k3.
$$
This is a contradiction for every 
$k\geq 2$.

\paragraph{\textbf{Case \ref{(1,1)}}}
    Suppose $(c,d)=(1,1)$. By \cref{claim: u>2m-1,claim: uupper}, we get 
    \begin{align*}
        2m-1<|U|\leq \frac{4}{3}m+2.
    \end{align*}
    Using additionally that $m>3/2$, we get that $\frac{4}{3}m+2<2m+1$ and hence, 
    \begin{align}\label{ineq: U size (1,1)}
        2m-1<|U|< 2m+1.
    \end{align}
    For each $u\in U$, let $I(u)$ denote the out-degree of $u$ in $(D_\theta-H)[U]$. 
    Let $X$ be the intersection of $U$ with the component in $\mathcal{C}_2$. Let $u^*\in U$ be the vertex in the component in $\mathcal{C}_3$ and $r\in X$ chosen such that $I(r)$ is maximal. Note that $X$ has no out-arcs in $D_\theta[U]$, as it is the intersection of $U$ with a terminal component. Fix $k\in \mathbb{N}$ and $0\leq \varepsilon<1$ such that\footnote{Note that this is a different choice from the one made in cases \ref{(1,2)} and \ref{(0,1)}, as we now allow $\varepsilon=0$ and earlier allowed $\varepsilon=1$.} $m=k+\varepsilon$. Then,\
    $I(u^*)=0$ and $I(r)\leq \lfloor m\rfloor= k$. Furthermore, we have $I(u)\leq k-1$ for every $u\in U\setminus\{u^*,r\}$. Indeed, otherwise there would be at least two vertices in $U\setminus \{u^\ast\}$ that have out-degree $k=\lfloor m\rfloor$ in $(D_\theta-H)[U]$. However, this is impossible, since by \cref{def:spine} all but at most one vertex of $U\setminus \{u^\ast\}$ have out-degree $1$ in $H$. Let 
    $$
    I \coloneqq k-I(r)+\sum_{u\in U\setminus\{u^*,r\}} (k-1-I(u)).
    $$
    Intuitively, $I$ is the number of integral edges that are ``missing''.
    Now, using that the number of integral edges in $(G-F)[U]$ is equal to $\sum_{u \in U}I(u)$, we can rewrite \eqref{tech ineq: basic frac + integ} as
    \begin{align*}
        \e_{G-F}(U)&\leq|U|-1+\sum_{u\in U}I(u)\\&\leq|U|-1+k+\left(\sum_{u\in U\setminus\{u^*,r\}}(k-1)\right) - I\\
        &=|U|-1+(k-1)(|U|-1)+1 -I\\&= k(|U|-1)+1-I.
    \end{align*}    
    Together with \eqref{tech ineq: basis}, we get
    \begin{align*}
        m\left(|U|-\frac 43\right)\leq k(|U|-1)+1-I.
    \end{align*}
    Plugging in  $m=k+\varepsilon$, we simplify to
    \begin{align}\label{ineq: infringement upper bound}
        I\leq \frac k3-\varepsilon\left(|U|-\frac 43\right)+1.
    \end{align}
    Let $M$ denote the number of edges in the complement of $(G-F)[U]$, so that
    $$
    \e_{G-F}(U)= \binom{|U|}{2}-M.
    $$
    Combining this with \eqref{tech ineq: basis}, we get
    \begin{align}\label{ineq: missing edges}
        M\leq\binom{|U|}{2}-m\left(|U|-\frac 43\right).
    \end{align}
    We now split into subcases depending on whether $0\leq \varepsilon\leq1/2$ or $1/2<\varepsilon<1$.
    First, suppose that $1/2<\varepsilon<1$. By \eqref{ineq: U size (1,1)}, we get that $|U|\in\{2k+1,2k+2\}$. However, plugging $\varepsilon> 1/2$ and $|U| \in \{2k+1, 2k+2\}$ into \eqref{ineq: infringement upper bound}  (and using the fact that $I$ must be non-negative) implies that the only possibility is $I = 0$, $k=1$ and $|U| = 2k+1=3$. 
    This in turn implies that $U\setminus \{u^\ast\}=X$. By \cref{lemma: no triangle between terminal components}, $(G-F)[U]$ cannot be a triangle, since this would imply that $u^\ast$ has two adjacent neighbours in the terminal component containing $X$. Hence, we have $m(3-\frac{4}{3})\le \e_{G-F}(U)\le 2$, a contradiction.

    So we may now assume that $0\leq \varepsilon\leq1/2$. Since $m>3/2$, we get $k\geq 2$. By \eqref{ineq: U size (1,1)}, we get that $|U|\in \{2k,2k+1\}$. Suppose first that $|U|=2k+1$. Plugging this into \eqref{ineq: missing edges}, and using that $k\leq m$, we get
    $$
    M\leq k(2k+1)- k\left (2k-\frac 13\right)=\frac{4}{3}k.
    $$
    Next, let us look at \eqref{ineq: infringement upper bound}. We get $I\leq k/3 +1< k,$ where we used $k\geq 2$. Hence, $I\leq k-1$.
    It follows that 
    \begin{equation}\label{ineq: (1,1) e(X) lb}
        \e_{G-F}(X)\geq\sum_{u\in X}I(u)\geq (k-1)|X|+1-I\geq (k-1)(|X|-1)+1.
    \end{equation}
    It follows that $|X|\geq 2k-1$.
    Suppose now that $\sum_{v\in N_{G-F}(u^*)\cap U}\theta(u^*,v)\leq m-1$.
    Using \eqref{tech ineq: theta sum}, we get
    $$
    \e_{G-F}(U)\leq(m-1)|U|+1.
    $$
    Comparing this with \eqref{tech ineq: basis}, we get
    $$
        m\left(|U|-\frac 43\right)\leq(m-1)|U|+1.
    $$
    Simplifying gives $|U|\leq\frac{4}{3}m+1<2m$, contradicting $|U|=2k+1\geq2m$. So we may assume that 
    \begin{equation}\label{eq:big sum}
    \sum_{v\in N_{G-F}(u^*)\cap U}\theta(u^*,v)> m-1.
    \end{equation}
    Recall that $u^*$ does not have any out-arcs in $D_\theta[U]$. Additionally, the assumption \eqref{eq:big sum} implies that $u^*$ cannot have an out-arc in $D_\theta$ to a vertex outside of $U$. It follows that $u^*$ does not have any out-arcs in $D_\theta$. In particular, $u^*$ is a singleton terminal component of $D_\theta$. Additionally, $u^*$ is incident to at least $k$ edges in $G_\theta^{\mathrm{frac}}[U]$. Suppose that $u^*$ is incident to at least two edges in the complement of $(G-F)[U]$. The fact that $d=1$ implies by \cref{lem: forest,lemma: no_edge} that the fractional neighbourhood of $u^*$ in $U$ is an independent set, so we find that 
    $$
    M\geq\binom{k}{2}+2.
    $$
    It follows that
    $$
    \binom{k}{2}+2\leq\frac{4}{3}k,
    $$
    which holds for no $k\in\mathbb{N}$. So we may assume that $u^*$ has at most one non-neighbour in $(G-F)[U]$. Then, using that there are no integral edges between $u^*$ and $X$, we get that $u^*$ is connected by at least $|X|-1$ fractional edges to $X$. Since by \cref{lemma: no triangle between terminal components} the neighbours of $u^\ast$ in $X$ form an independent set in $G$, it follows that $G[X]$ contains at most $|X|-1$ edges. On the other hand, $(G-F)[X]$ must contain at least $(k-1)(|X|-1)+1$ integral edges by \eqref{ineq: (1,1) e(X) lb}. So we get
    $$
     (k-1)(|X|-1)+1\leq |X|-1,
    $$
    implying that $k\le 1$, a contradiction.

    We move on to the last case, that is $|U|=2k$. We again plug this into \eqref{ineq: missing edges} and \eqref{ineq: infringement upper bound} to get, using $k\geq 2$,
    $$ 
    M\leq k(2k-1)-k\left(2k-\frac43\right) = \frac k3<k-1,
    $$
    and
    $$
        I\leq \frac k3+1< k.
    $$
    As above, this implies that $X$ has at least $2k-1$ vertices. Since there are at most $k-2$ missing edges, we get that $u^*$ is adjacent to at least $k+1$ edges into $|X|$. Using \cref{lemma: no triangle between terminal components}, we get that there are at least $\binom{k+1}{2}$ missing edges and thus,
    $$ 
        \binom{k+1}{2}\leq M\leq k-2,
    $$
    a contradiction.
This concludes the proof of the last remaining case and hence, of \cref{lemma: main techincal}.
\end{proof}

\section{Proof of Theorem \ref{thm:43}}\label{sec:43}
Given \cref{lemma: main techincal}, we now proceed with the proof of \cref{thm:43}. We use \cref{lemma: main techincal} to deal with the majority of cases, whereas a more careful selection of the forest $F$ is needed to resolve the final cases.

\begin{proof}[Proof of \cref{thm:43}]
We begin by picking an optimal $m$-allocation $\theta$ of $G$, which we may do by \cref{thm:allocations}.
We now split the proof into three cases, depending on the value of $m$. 
\paragraph{\textbf{Case 1: ${9/5<m<9/4}$ or $m>9/4$}}
By \cref{lemma:forestexist}, $D_\theta$ contains a spine $H$ with underlying undirected graph $F$, which is a forest. It then follows by \cref{lemma: main techincal} that for every $U\subseteq V(G)$ with $|U|\geq 2$, it holds that $\e_{G-F}(U)<m(|U|-4/3)$ and the statement follows. 

\paragraph{\textbf{Case 2: $m=9/4$}}
Again, by \cref{lemma:forestexist}, there exists a spine of $D_\theta$. Suppose that $H$ is a spine of $D_\theta$ minimizing the number of copies of $K_4$ in $G-F$, where $F$ is the underlying undirected graph of $H$. We are done if there is no $U\subseteq V(G)$ with $|U|\geq 2$ and $\e_{G-F}(U)\geq m(|U|-4/3)$, so we may assume that $U$ is an inclusion-wise minimal such set. By \cref{lemma: main techincal}, we get that $U$ intersects exactly one terminal component $X$ of $D_\theta$ and it intersects $X$ in at least two vertices as well as that $|U|\leq \frac 43m+1=4$. 
We note that if $|U|=2$, then our assumption reads $\e_{G-F}(U) \geq m(|U|-4/3) = 2m/3 = 3/2$, which is impossible since $\e_{G-F}(U) \leq 1$ if $|U|=2$. Similarly, the case $|U|=3$ can be ruled out as our assumption reads
$\e_{G-F}(U)\geq m(3-4/3)=5m/3=15/4$, which is impossible as $15/4>3$. So we may conclude that $|U|=4$, and moreover we have that $\e_{G-F}(U) \geq m(4-4/3)=8m/3=6$, which implies that $U$ induces a $K_4$ in $G-F$.

Let $W$ be any set of vertices of $G$. As in \eqref{ineq: flowsum}, we have that
\begin{align}\label{eq: theta_sum}
\e_{G-F}(W)=\sum_{(u,v) \in W^2, uv\in E(G-F)}{\theta(u,v)}=\sum_{u \in W}\sum_{v \in N_{G-F}(u)\cap W}{\theta(u,v)}.
\end{align}
Let us apply the above equality to $U$. Let $r$ denote the unique vertex in $X$ that has out-degree $0$ in $H$  (i.e., the root of the in-tree that $H$ induces on $X$). Since $H$ contains an out-arc
for every vertex in $U\setminus \{r\}$, it follows that 
$\sum_{v \in N_{G-F}(r)\cap U}{\theta(r,v)}\leq m$ and that  $\sum_{v \in N_{G-F}(u)\cap U}{\theta(u,v)}\leq m-1$ for all $u\in U\backslash\{r\}$.
This implies that $r\in U$, as otherwise, $\e_{G-F}(U)\leq (m-1)|U| = 5$. For the same reason, $U$ must contain all out-neighbours of $r$ in $D_\theta$. Let $u\in U$ be an out-neighbour of $r$ and observe that $u\in X$ since $X$ is a terminal component. Thus, by \cref{def:spine}, there is a directed path from $u$ to $r$ in $H$. Therefore, $H+(r,u)$ contains a unique directed cycle, which necessarily includes the edge $(r,u)$. Let $v$ denote the successor of $u$ along this cycle. As $U$ induces a $K_4$ in $G-F$, and as $uv\notin E(G-F)$, we have that $v\notin U$. Moreover, $H+(r,u)-(u,v)$ is a spine of $D_\theta$ with root $u$ in $X$. As $H$ was chosen to minimize the number of copies of $K_4$ in $G-F$, it must hold that there exists some vertex set $\widetilde{U}$ that induces a $K_4$ in $G-(F+ru-uv)$ but not in $G-F$. The same argument as above implies that $u, v \in \widetilde{U}$ and $r\notin \widetilde{U}$. Moreover, we have $\e_{G-F}(\widetilde{U})\ge \e_{G-(F+ru-uv)}(\widetilde{U})-1 = 6-1=5$. Observe that $\widetilde{U}$ is also a minimal set satisfying $|\widetilde{U}|\geq 2$ and $\e_{G-(F+ru-uv)}(\widetilde{U})\geq m(|\widetilde{U}|-4/3)$. By \cref{lemma: main techincal}, it follows that $\widetilde{U}$ only intersects one terminal component of $D_\theta$; this terminal component is necessarily $X$, since $u\in X\cap \widetilde{U}$.

On the one hand, applying \eqref{eq: theta_sum} to $W=U\cup \widetilde{U}$, we get $\e_{G-F}(U\cup \widetilde{U})\leq (m-1)|U\cup \widetilde{U}|+1$, since every vertex in $U \cup \widetilde{U}$ distinct from $r$ has out-degree $1$ in $H$. On the other hand, $\e_{G-F}(U\cup \widetilde{U})\geq \e_{G-F}(U)+\e_{G-F}(\widetilde{U})-\e_{G-F}(U\cap \widetilde{U}) \geq 11 - \binom{|U\cap \widetilde{U}|}{2}.$ The size of $|U\cap \widetilde{U}|$ is at least $1$ as both sets contain $u$, and is at most $3$ since $r\in U \setminus \widetilde{U}$. Hence, $m=9/4$ must satisfy one of the inequalities
$$ 11 - \binom 12 \leq 7(m-1)+1,  $$
$$ 11 - \binom 22 \leq 6(m-1)+1,  $$
or
$$ 11 - \binom 32 \leq 5(m-1)+1. $$
But none of these inequalities hold, a contradiction which concludes the proof of this case.

\paragraph{\textbf{Case 3: $3/2<m\leq9/5$}}
We begin by noting that since $m<2$, every vertex of $G$ has out-degree at most one in $D_\theta$. The resolution of this case now rests on the following crucial claim.
\begin{claim}\label{clm: good edge} 
Each non-singleton terminal component of $D_\theta$ contains an arc $(u,v)$ such that $u$ and $v$ have no common neighbours in $G_\theta^{\mathrm{frac}}$. 
\end{claim}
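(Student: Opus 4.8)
The plan is to argue by contradiction: suppose some non-singleton terminal component $C$ of $D_\theta$ has the property that \emph{every} arc $(u,v)\in A(D_\theta[C])$ has $u,v$ sharing a common neighbour in $G_\theta^{\mathrm{frac}}$. Since $m<2$, every vertex has out-degree at most one in $D_\theta$, so $D_\theta[C]$ is a connected digraph on $|C|\ge 2$ vertices with every vertex of out-degree $\le 1$ that is strongly connected; hence $D_\theta[C]$ is precisely a directed cycle $c_0\to c_1\to\cdots\to c_{\ell-1}\to c_0$ with $\ell=|C|\ge 2$. (If $\ell=2$ this is a digon on two vertices, i.e.\ a pair of opposite arcs.) So the hypothesis says: for every $i$, the consecutive pair $c_i,c_{i+1}$ has a common fractional neighbour $x_i$.

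The key step will be to exploit this to build a shifting operation that contradicts the optimality of $\theta$. The idea is to pick an arc $(u,v)=(c_i,c_{i+1})$ of the cycle and its common fractional neighbour $x=x_i$, so that $u,v,x$ form a triangle in $G$ with $uv$ integral (oriented $u\to v$) and $ux,vx$ fractional. Shift $\theta$ along the triangle $u\to v\to x\to u$ in the sense of \cref{obs:shift}: since $\theta(u,v)=1$, this removes the arc $(u,v)$ and adds at least one of $(v,x),(x,u)$. Now $D_\theta-(u,v)$: by \cref{it:delete arc}, since $C$ is a terminal component and $(u,v)\in A(D_\theta[C])$, the terminal components of $D_\theta-(u,v)$ are those of $D_\theta$ with $C$ replaced by a proper subset $X\subsetneq C$ containing $v$ (proper because $u$ can no longer be reached from $v$ once we delete the unique arc out of $u$ — here we use that the cycle structure forces $u\notin X$). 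So the modified allocation $\theta'$ has terminal-component structure governed by $X$ plus whichever of $(v,x),(x,u)$ got added.

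The main obstacle — and where the case analysis lives — is to verify that in \emph{every} sub-case the resulting $\theta'$ strictly improves one of the four optimality criteria, yielding the contradiction. If both arcs $(v,x)$ and $(x,u)$ are added, then $\theta'$ has strictly more integral edges than $\theta$ while (by \cref{it:subgraph}, since we only added arcs to $D_\theta-(u,v)$ and $D_\theta-(u,v)\subseteq D_\theta$ up to the single deleted arc) not increasing the number of terminal components — wait, one must be careful, as $D_{\theta'}\supseteq D_\theta-(u,v)$ but not $\supseteq D_\theta$; instead argue that adding $(x,u)$ reconnects things so that $X$ merges back toward $C$, keeping the count of terminal components at most that of $D_\theta$. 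If only one arc is added, one splits according to whether it is $(x,u)$ (use \cref{it:singleton} or \cref{item:adduselessedge}/\cref{it:outside terminal} depending on whether $x$ lies in a terminal component) or $(v,x)$ (here $x$ acquires an out-arc; if $x$ was in a singleton terminal component we are again in \cref{it:singleton}, otherwise \cref{item:adduselessedge}). In each branch the aim is: either strictly fewer terminal components, or the same number but strictly more integral edges, or — in the delicate branch where neither improves — strictly more vertices in terminal components or strictly more arcs into singleton terminal components, exactly mirroring the four-level optimality in the definition and the arguments already used in \cref{lemma: no_edge}. I expect the genuinely hard part to be the branch where $x$ itself lies in a non-terminal component and only $(v,x)$ is added: there one must chase a directed path from $x$ through the spine-free part of $D_\theta$ to locate where it enters a terminal component and check that deleting $(u,v)$ did not disturb that, so that the net effect is purely to shrink $C$ to $X$ — contradicting criterion (1) if $X\subsetneq C$ costs a terminal component, or else criterion (3). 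Throughout, the structural rigidity coming from $m<2$ (every vertex out-degree $\le 1$, so $D_\theta[C]$ is literally a cycle) is what keeps the bookkeeping finite and is the reason this claim is only needed, and only true, in the small-$m$ regime.
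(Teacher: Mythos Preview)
Your overall strategy matches the paper's: shift along a triangle formed by an arc of the cycle $C$ and a common fractional neighbour, then contradict optimality of $\theta$. But a key structural lemma is missing, and without it the case analysis you sketch does not close. The paper first proves the following forest fact: if $v_1,\ldots,v_\ell$ are distinct vertices of a forest $T$ such that every cyclically consecutive pair has a common neighbour in $T$, then some single vertex of $T$ is adjacent to all of $v_1,\ldots,v_\ell$. Applied to the forest $G_\theta^{\mathrm{frac}}$ and the cycle vertices of $C$, this yields one vertex $z$ fractionally adjacent to \emph{every} vertex of $C$. Necessarily $z\notin C$ (the cycle edges are integral), and by \cref{lemma: no triangle between terminal components} $z$ lies in no terminal component, so $z$ has a unique out-arc and the spine contains a directed path from $z$ to some terminal component $C'$. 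When $C'=C$, the paper chooses the arc $(v,w)\in A(D_\theta[C])$ so that $w$ is the \emph{first} vertex of $C$ on that path; this choice is exactly what guarantees that after shifting the new terminal component contains all of $C$ together with $z$ and its path, giving strictly more vertices in terminal components. The freedom to make this choice comes precisely from knowing that $z$ is a common fractional neighbour of all of $C$, so the triangle $z,v,w$ is available for whichever arc $(v,w)$ you like.

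In your approach, by picking an arbitrary arc and only one common neighbour $x$ of its two endpoints, you have no control over where the out-arc path from $x$ re-enters $C$. If that path enters $C$ at a vertex other than the head of your chosen arc, then after shifting the resulting terminal component omits a segment of the cycle: you get the same number of terminal components, the same number of integral edges, but possibly \emph{fewer} vertices in terminal components, so no contradiction with optimality. Two smaller corrections: by \cref{it:delete arc} the replacement terminal component of $D_\theta-(u,v)$ contains the tail $u$, not $v$ (and here it is in fact the singleton $\{u\}$, since $u$'s only out-arc was removed); and shifting along $u\to v\to x\to u$ as in \cref{obs:shift} adds arcs from $\{(x,v),(u,x)\}$, not $\{(v,x),(x,u)\}$.
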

Before we prove the above claim, let us first show how to use it to complete the argument. Suppose then that every non-singleton terminal component contains an arc $(u,v)\in A(D_\theta)$ such that $u$ and $v$ have no common neighbour in $G_\theta^\text{frac}$. Let $R$ be a set of vertices containing exactly one such vertex $u$ for each non-singleton terminal component, as well as all the vertices in singleton terminal components of $D_\theta$. By \cref{lemma:forestexist}, there exists a spine $H$ of $D_\theta$ with underlying forest $F$ such that every vertex in $V(D)\setminus R$ has out-degree exactly one in $H$ and therefore, out-degree zero in $D_\theta - H$. Suppose towards a contradiction that $m_{\frac{4}{3}}(G-F)\geq m$. Then, there exists a minimal $U\subseteq V(G)$ with $|U|\geq 2$ and $\e_{G-F}(U)\geq m(|U|-4/3)$. By \cref{lemma: main techincal}, it follows that $U$ intersects a unique terminal component $X$ of $D_\theta$ and that $U$ intersects $X$ in at least two vertices. Additionally, we have that $|U|\leq \frac 43 m + 1 < 4$. Observe that $|U|>2$ since $m(2-4/3)>1$, so we conclude that $|U|=3$. Additionally, $\e_{G-F}(U) \geq m(3-4/3)>2$, implying that $(G-F)[U]$ is a triangle. By \cref{lem: forest}, $G_\theta^\text{frac}$ is a forest and therefore $(D_\theta-H)[U]$ contains at least one arc. But recall that the only vertices with out-arcs in $(D_\theta-H)[U]$ are vertices of $R$, and that $U$ intersects only one terminal component $X$. Let $r$ be the root of $X$, so that $U\cap R = \{r\}$. Let $(r,v)$ be the out-arc of $r$ in $(D_\theta-H)[U]$, and note that by the argument above this is the only arc in $(D_\theta-H)[U]$. But by our choice of the roots, $r$ and $v$ do not share any neighbours in $G_\theta^\text{frac}$ and hence $(G-F)[U]$ can not be a triangle. This contradiction completes the proof, and all that remains is to prove \cref{clm: good edge}.
\begin{proof}[Proof of \cref{clm: good edge}]
    Let us start by proving the following lemma.
    \begin{lemma}\label{lemma:star} Let $T$ be a forest, $\ell\ge 2$ an integer, and let $v_1, v_2, \dots, v_\ell$ be pairwise distinct vertices in $T$ such that each of the vertex pairs $v_1v_2, v_2v_3, \dots, v_\ell v_1$ have a common neighbour in $T$.
    Then there exists a vertex $u$ that is adjacent to all vertices $v_1, \dots, v_\ell.$ 
    \end{lemma}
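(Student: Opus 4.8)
The plan is to prove \cref{lemma:star} by induction on $\ell$, the central combinatorial fact being that a closed walk of positive length in a forest must backtrack. First I would record two preliminary observations, used throughout. Since $T$ is a forest and the $v_i$ are pairwise distinct, for each cyclically consecutive pair $v_i,v_{i+1}$ (indices mod $\ell$) the two vertices are non-adjacent --- a common neighbour together with the edge $v_iv_{i+1}$ would form a triangle --- so the unique $v_i$--$v_{i+1}$ path in $T$ has length exactly $2$, and its middle vertex $w_i$ is the \emph{unique} common neighbour of $v_i$ and $v_{i+1}$, since two distinct common neighbours would yield a $4$-cycle. This already settles the base case $\ell=2$: there the pairs $v_1v_2$ and $v_2v_1$ coincide, so $w_1=w_2$ is a single vertex adjacent to both $v_1$ and $v_2$.

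For the inductive step ($\ell\ge 3$) I would first prove the key claim that $w_i=w_{i+1}$ for some $i$ (indices mod $\ell$). Consider the closed walk $W=(v_1,w_1,v_2,w_2,\dots,v_\ell,w_\ell,v_1)$ in $T$, whose even-indexed vertices are the $v_i$ and whose odd-indexed vertices are the $w_i$; note that all of these vertices lie in a single tree component of $T$. Pick a vertex $x$ of $W$ at maximum $T$-distance from $v_1$; since $W$ traverses at least one edge, this maximum is at least $1$, so $x\ne v_1$ and hence $x$ occurs at an internal position of the cyclic walk and has two neighbours along $W$. Both of these are neighbours of $x$ in $T$ and, by maximality, lie at $T$-distance at most that of $x$ from $v_1$; but in a tree a vertex at distance $r\ge 1$ from $v_1$ has exactly one neighbour at distance $r-1$ and all others at distance $r+1$, so both $W$-neighbours of $x$ equal that single ``parent'' vertex, i.e.\ $W$ backtracks at $x$. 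Because the $v_i$ are pairwise distinct, $x$ cannot be one of the $w_i$ (that would force two consecutive $v$'s to coincide), so $x=v_{i+1}$ for some $i$, and its two $W$-neighbours give $w_i=w_{i+1}$.

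After relabelling cyclically so that $w_1=w_2$ --- a vertex then adjacent to each of $v_1,v_2,v_3$ --- I would apply the induction hypothesis to the length-$(\ell-1)$ cyclic sequence $v_1,v_3,v_4,\dots,v_\ell$, whose consecutive pairs all have common neighbours ($w_1$ for $\{v_1,v_3\}$, $w_j$ for $\{v_j,v_{j+1}\}$ with $3\le j\le\ell-1$, and $w_\ell$ for $\{v_\ell,v_1\}$). This produces a vertex $u$ adjacent to all of $v_1,v_3,\dots,v_\ell$. Then $u$ and $w_1$ are both common neighbours of $v_1$ and $v_3$; since $u$ is adjacent to $v_1$ and $v_3$ it is distinct from both, and likewise for $w_1$, so if $u\ne w_1$ the four distinct vertices $v_1,u,v_3,w_1$ would form a $4$-cycle, a contradiction. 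Hence $u=w_1$, which is also adjacent to $v_2$, so $u$ is adjacent to every $v_i$, completing the induction.

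I expect the main obstacle to be the backtracking claim: this is the only point where the forest hypothesis is genuinely exploited beyond the elementary ``no triangle / no $4$-cycle'' arguments, and one must be careful about the index parities (to rule out the backtrack landing on a $w$-vertex) and about the degenerate case where all walk vertices would coincide with $v_1$. Everything else reduces to bookkeeping with the uniqueness of common neighbours in a forest.
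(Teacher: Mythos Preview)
Your proof is correct and follows the same inductive skeleton as the paper: establish that two consecutive common neighbours coincide, $w_i=w_{i+1}$, then drop the corresponding $v$ and invoke the induction hypothesis, using the ``no $4$-cycle'' fact to identify the induced common neighbour with $w_i$.

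The only real difference is in how you obtain the coincidence $w_i=w_{i+1}$. You argue via the closed walk $v_1,w_1,v_2,\dots,w_\ell,v_1$: a vertex $x$ at maximal tree-distance from $v_1$ forces a backtrack, and parity (distinctness of the $v_i$) rules out a backtrack at a $w$-position. The paper instead observes that the induced subgraph $T[\{v_1,\dots,v_\ell,w_1,\dots,w_\ell\}]$ is a forest and so has a vertex of degree at most~$1$; since every $w_j$ has degree at least~$2$ there, this leaf must be some $v_j$, whence its only neighbour in the set satisfies $w_{j-1}=w_j$. Both arguments exploit acyclicity, but the paper's ``forest has a leaf'' line is a bit shorter and avoids the bookkeeping about walk positions and the rooted-distance argument that you flagged as the main obstacle. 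Your version, on the other hand, makes explicit from the outset that the common neighbour $w_i$ is unique, which slightly streamlines the final identification step.
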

    \begin{proof}  We prove the lemma by induction on $\ell$. It trivially holds if $\ell=2$, so suppose $\ell\ge 3$ and that we have proved the lemma with value $\ell-1$. For each $i=1,\ldots,\ell$, let $w_i$ be a common neighbour of $v_i$ and $v_{i+1}$ (index addition modulo $\ell$). Let $W\coloneqq \{v_1,\ldots,v_\ell,w_1,\ldots,w_\ell\}$. Since $T[W]$ is a forest, it contains a vertex of degree at most $1$, which must be one of $v_i, i=1,\ldots,\ell$. Without loss of generality say that $v_\ell$ has degree $1$ in $T[W]$. This implies that $w_{\ell-1}=w_\ell$, as both are neighbours of $v_\ell$. Hence, $v_1,\ldots,v_{\ell-1}$ also has the property that any pair of two cyclically consecutive vertices have a common neighbour in $T$. By the inductive hypothesis, there is a common neighbour $w$ of $v_1,\ldots,v_{\ell-1}$. Since both $w$ and $w_{\ell-1}=w_\ell$ are common neighbours of $v_1$ and $v_{\ell-1}$, and since $T$ contains no cycles, it follows that $w=w_\ell=w_{\ell-1}$, and thus $w$ is a common neighbour of $v_1,\ldots,v_\ell$.
    \end{proof}
    Note that, as the maximum out-degree of $D_\theta$ is at most $1$, each non-singleton terminal component of $D_\theta$ is the vertex-set of a directed cycle. Suppose towards a contradiction that there exists a non-singleton terminal component $C$ such that for every arc $(u,v)$ on the directed cycle $D_\theta[C]$, the vertices $u$ and $v$ have a common neighbour in $G_\theta^{\mathrm{frac}}$. By \cref{lem: forest}, $G_\theta^{\mathrm{frac}}$ is a forest and therefore, by \cref{lemma:star}, there exists a common neighbour $u$ of all vertices of $C$ in $G_\theta^\text{frac}$. This implies that $u\notin C$. Moreover, by \cref{lemma: no triangle between terminal components}, $u$ cannot be contained in another terminal component. Recall that since $m<2$, $u$ has at most one out-arc in $D_\theta$, and if it had zero out-arcs it would be a singleton terminal component, which we just argued is not the case. Therefore, $u$ has exactly one out-arc in $D_\theta$, and thus $H$ contains a directed path from $u$ to a terminal component $C'$ of $D_\theta$.

    First, suppose that $C'\neq C$. Let $(v,w)\in A(D_\theta[C])$ be any arc and $\theta'$ the $m$-allocation obtained from $\theta$ by shifting along the triangle $u,v,w$. By \cref{obs:shift}, $D_{\theta'}$ is obtained from $D_\theta$ by removing the arc $(v,w)$ and adding one (or both) of the arcs $(v,u), (u,w)$. But as $u$ has an out-arc in $D_\theta$ which remains in $D_{\theta'}$, it cannot be that $(u,w)$ gets added, since this would contradict $m<2$ and the fact that $\theta'$ remains an $m$-allocation. Hence, $D_{\theta'}=D_\theta-(v,w)+(v,u)$. Note that by \cref{it:delete arc} and using that $v$ has no out-arcs in $D_\theta - (v,w)$, the terminal components of $D_\theta-(v,w)$ are the singleton $\{v\}$ and the remaining terminal components of $D_\theta$ besides $C$. But then, \cref{it:subgraph} implies that $D_{\theta'}=D_\theta-(v,w)+(v,u)$ has strictly fewer terminal components than $D_\theta$, since $(v,u)$ together with the path in $H$ from $u$ to $C'$ forms a directed path from $v$ to another terminal component of $D_\theta-(v,w)$. This contradicts the optimality of $\theta$.
    
    Therefore, we may assume that $C' = C$. Thus, $H$ contains a directed path from $u$ to $C$. Let $(v,w)\in A(D_\theta[C])$ be the directed edge in $C$ such that $w$ is the first vertex in $C$ reached on this path. Let $\theta'$ be the $m$-allocation obtained from $\theta$ by shifting along the triangle $u,v,w$. As before, we can use \cref{obs:shift} and the fact that $u$ has an out-arc in $D_\theta$ that remains in $D_{\theta'}$ (recall that $u$ is connected only by fractional edges to the vertices in $C$, and hence cannot have an out-arc to one of $v,w$) to conclude that $D_{\theta'}=D_{\theta}-(v,w)+(v,u)$. As above, by \cref{it:delete arc}, the terminal components of $D_\theta-(v,w)$ are the same as the ones of $D_\theta$, except that $C$ is replaced by the new singleton terminal component $\{v\}$. Then, it follows by \cref{it:singleton} and the optimality of $\theta$ that $D_{\theta'}$ has the same terminal components as $D_\theta$ besides $C$, which is replaced by a terminal component of $D_{\theta'}$ containing $\{u,v\}$. Let us denote the vertices of this component by $A$. Every vertex reachable from $v$ in $D_{\theta'}$ is contained in $A$. By our choice of $w$, we see that $w$ is reachable from $u$ and hence from $v$ (via the arc $(v,u)$) in $D_{\theta'}$. Therefore all vertices in $C$ are reachable, since $C-(v,w)$ is a path from $w$ to $v$ passing through all vertices of $C$. Therefore, $C\subsetneq A$, as $u \in A \setminus C$. Thus, there are strictly more vertices contained in terminal components of $D_{\theta'}$ than in terminal components of $D_\theta$. Since the number of terminal components of $D_\theta$ and $D_\theta'$ is identical and since the number of integral edges of $\theta$ and $\theta'$ is also identical, this is a contradiction to the optimality of $\theta$, completing the proof.
\end{proof}
As argued above, \cref{clm: good edge} finishes the last case, which concludes the proof of \cref{thm:43}.
\end{proof}

\section{Concluding remarks}\label{sec:conclusion}
The results of this paper confirm the Kohayakawa--Kreuter conjecture, \cref{conj:KK}, for all $r$-tuples of graphs. However, there remain a few natural directions for future work to explore.

The first question we wish to highlight is a conjecture of Kuperwasser, Samotij, and Wigderson \cite[Conjecture 1.5]{kuper}, which strengthens the conclusion of \cref{thm:pseudoforest} by demanding that $F$ is a forest, rather than a pseudoforest.
\begin{conjecture}[Kuperwasser--Samotij--Wigderson]\label{conj:KSW}
    Let $m>0$ be a real number. Every graph $G$ with $m(G) \leq m$ contains a forest $F \subseteq G$ such that $m_2(G-F) \leq m$.
\end{conjecture}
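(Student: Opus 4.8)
The natural plan is to push the ``optimal allocation plus spine'' machinery of \cref{thm:pseudoforest,thm:43} one notch further, so that it outputs a genuine \emph{forest} rather than a pseudoforest while retaining the sharp bound $m_2(G-F)\le m$. The small cases are immediate: if $m<1$ then $G$ is itself a forest and $F=G$ works, and if $1\le m\le\tfrac32$ then $m_1(G)\le m+\tfrac12\le 2$ by \cref{lem:m1 m}, so \cref{lem:NW} splits $G$ into two forests $F_1,F_2$, and $F=F_1$ works because $m_2(F_2)\le 1\le m$. Assume therefore that $m>\tfrac32$. Apply \cref{thm:allocations} to obtain an optimal $m$-allocation $\theta$, use \cref{lemma:forestexist} to pick a spine $H$ of $D_\theta$ with a root set $R$ chosen as carefully as in \cref{clm: good edge}, let $F$ be the underlying forest of $H$, and suppose for contradiction that $m_2(G-F)>m$. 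Fix an inclusion-wise minimal $U$ with $|U|\ge 3$ and $\e_{G-F}(U)>m(|U|-2)+1$; the goal is to contradict its existence.

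The next step is to re-establish the analogue of \cref{lemma: main techincal} with $m_2$ in place of $m_{\frac43}$. The arguments of \cref{sec:technical lemma} should carry over with essentially cosmetic changes: the fractional edges form a forest by \cref{lem: forest}, every non-root vertex of a terminal component and every vertex of a non-terminal component loses an out-arc to $H$, and one compares the various upper bounds on $\e_{G-F}(U)$ against $\e_{G-F}(U)>m(|U|-2)+1$ after partitioning the strong components meeting $U$ into non-terminal ones, terminal ones meeting $U$ at least twice, and terminal ones meeting $U$ exactly once. The upshot should be that $U$ meets exactly one terminal component $C$ of $D_\theta$, in at least two vertices, with a non-terminal remainder of bounded size, while the lower bound $|U|>2m-1$ of \cref{claim: u>2m-1} persists and the analogue of \cref{claim: uupper} gives $|U|<2m$ (now with $2$ replacing $\tfrac43$); the intermediate case analysis, however, will be more extensive than in \cref{sec:technical lemma}. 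One subtlety demands care: the minimality of $U$ now gives no information about pairs $Y\subseteq U$ with $|Y|=2$, since the threshold $m(|Y|-2)+1=1$ is vacuous; hence the sub-cases in which some component meets $U$ in exactly two vertices must be treated directly via the arc count rather than by invoking minimality.

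In the surviving configuration, $U$ concentrates on a single terminal component $C$, and its ``free'' vertex is the root $r\in C$, which has out-degree $0$ in $H$ and therefore keeps up to $\lfloor m\rfloor$ integral out-arcs in $D_\theta-H$, all pointing into $C$. The difficulty is that these are integral rather than fractional edges, so the ``independent fractional neighbourhood'' argument used in \cref{thm:pseudoforest} is unavailable, and the no-common-fractional-neighbour property delivered by \cref{clm: good edge} is vacuous inside a terminal component. What seems to be needed is a new structural lemma about the root, chosen jointly with a suitable optimisation of $H$ itself — in the spirit of the $K_4$-minimising spine of Case~2 in the proof of \cref{thm:43}, but now minimising the number of violating sets of each size in turn — such that for every near-extremal $U$ the forest $F$ already contains at least $\e_G(U)-m(|U|-2)-1$ of the edges of $G[U]$; since $m(G)\le m$ forces $\e_G(U)\le m|U|$, this required number is at most $2m-1$, so one is asking the chosen forest to capture a fixed fraction of the edges of every almost-extremal set. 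Establishing this would combine the shifting operation \cref{obs:shift}, the tools \cref{lem: forest,lemma: no triangle between terminal components,lemma: no_edge}, and the in-tree structure of $H[C]$, exploiting that inside $C$ every non-root vertex has out-degree $\lfloor m\rfloor$ in $D_\theta$ and exactly $1$ in $H$.

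This last step is the main obstacle, and the reason it is harder than the corresponding step of \cref{thm:43} is instructive. There, the ``$-\tfrac43$'' forces $|U|\le\tfrac43 m+1$, so the only clique-type obstruction that survives is $K_4$ at the isolated value $m=\tfrac94$, and the only regime requiring a structural argument at all is $\tfrac32<m\le\tfrac95$, where $|U|=3$ and the obstruction is merely a triangle, dispatched by the modest \cref{clm: good edge}. With the ``$-2$'' of $m_2$ one instead gets $2m-1<|U|<2m$, hence $|U|=\lfloor 2m\rfloor$ (and when $2m\in\ZZ$ this open interval contains no integer, so the case does not occur at all), and the obstruction is a near-$K_{\lfloor 2m\rfloor}$ inside a single terminal component, arising for \emph{every} $m>\tfrac32$ rather than at finitely many values. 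One therefore needs a structural argument that controls arbitrarily large and dense obstructions uniformly in $|U|$, and in particular a root-selection principle that stays powerful deep inside a terminal component — something genuinely beyond the finite case-checking that sufficed for \cref{thm:43}. Making the choice of $R$ (and of $H$) robust enough to supply such a uniform bound is, in our view, the heart of \cref{conj:KSW}.
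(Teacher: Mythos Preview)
The statement you are attempting is \cref{conj:KSW}, which the paper explicitly leaves as an \emph{open conjecture}; there is no proof in the paper to compare against. The paper remarks that the conjecture is known when $m$ is an integer (via matroid-theoretic methods from \cite{kuper}) and that ``it seems very possible that our techniques\dots could resolve'' it, but nothing more.

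Your proposal is not a proof, and to your credit you say so. The small cases $m<1$ and $1\le m\le\tfrac32$ are correct and complete. For $m>\tfrac32$ you lay out a plausible programme and then, quite accurately, diagnose where it breaks: once the analogue of \cref{lemma: main techincal} forces a minimal violator $U$ into a single terminal component $C$ with $2m-1<|U|<2m$, the slack comes entirely from the root $r\in C$, whose $\lfloor m\rfloor$ integral out-arcs in $D_\theta-H$ all land inside $C$, and none of the existing structural lemmas (\cref{lem: forest,lemma: no triangle between terminal components,lemma: no_edge,clm: good edge}) say anything about integral edges \emph{within} a terminal component. Your observation that the obstruction is a near-clique of size $\lfloor 2m\rfloor$ arising for \emph{every} $m>\tfrac32$, rather than at isolated values as in \cref{thm:43}, is exactly the point: the finite case-checking of \cref{sec:technical lemma,sec:43} cannot close this gap, and a genuinely new root-selection or spine-optimisation principle, uniform in $|U|$, would be needed. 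That is precisely why the paper states this as a conjecture rather than a theorem.

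In short: your assessment of the difficulty is sound and aligns with the paper's own view, but what you have written is a discussion of the obstacle, not a resolution of it.
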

Note that this conjecture is in fact a common strengthening of both \cref{thm:pseudoforest} and \cref{thm:43}, since $m_{\frac 43}(G-F)<\max \{m,m_2(G-F)\}$ assuming $m> \frac 32$.
This is an interesting and natural graph decomposition in its own right, but would also have further implications for the study of Ramsey properties of random graphs. Namely, as proved in \cite[Theorem 1.10]{kuper}, \cref{conj:KSW} would imply a natural generalization of \cref{conj:KK} where one wishes to color $G_{n,p}$ and avoid not just a single graph $H_i$ in color $i$, but some finite family $\{H_i^{(1)},\dots,H_i^{(t)}\}$. It seems very possible that our techniques, potentially coupled with a more careful choice of $\theta$ and $F$, could resolve \cref{conj:KSW}. We note that \cref{conj:KSW} was proved in \cite{kuper} under the extra assumption that $m$ is an integer, using matroid-theoretic techniques; however, those techniques do not seem to work in the general case.

Another natural question that deserves more study concerns the Ramsey properties of random hypergraphs. In this setting, even the symmetric case (i.e.\ the generalization of \cref{thm:RR} to hypergraphs) is not well-understood. The analogous $1$-statement to that in \cref{thm:RR} is known to be true \cite{MR2760356,MR3548529}. However, it was shown in \cite{MR3725732} that in uniformity $4$ and above, the corresponding $0$-statement is not true in general; there exist hypergraphs whose symmetric Ramsey threshold is controlled by an intermediate ``reason'' between the ``local'' and ``global'' reasons discussed in the introduction. Despite this, Bowtell, Hancock, and Hyde \cite{bowtell} proved an analogue of \cref{thm:prob lem} in the hypergraph setting, so in some sense all that remains to understand is when the hypergraph analogue of \cref{thm:det lem} holds.

\section*{Acknowledgements}
We are indebted to Benjamin Moore for many insightful discussions on this topic, and for carefully reading an earlier draft of this paper. We would also like to thank the anonymous referees for many helpful comments and corrections.

\bibliographystyle{yuval}
\bibliography{kk}
\end{document}